\newtheorem{thm}{Theorem}[section]
 \newtheorem{cor}[thm]{Corollary}
 \newtheorem{lem}[thm]{Lemma}
 \newtheorem{prop}[thm]{Proposition}
 \theoremstyle{definition}
 \newtheorem{defn}[thm]{Definition}
 \theoremstyle{remark}
 \newtheorem{rem}[thm]{Remark}
 \numberwithin{equation}{section}
\begin{document}
\setcounter{page}{1}

\title[Bergman kernel estimates and Toeplitz operators]{Bergman kernel estimates and Toeplitz \\ operators on holomorphic line bundles}
\author[Said Asserda]{Said Asserda}
\address{Ibn tofail University, Faculty of Sciences, Department of Mathematics, PO 242 Kenitra Morocco.}
\email{asserda-said@univ-ibntofail.ac.ma}
\subjclass[2010]{Primary 47B35; Secondary 32A25,
30H20.}
\keywords{Toeplitz operator, Bergman space, line bundle,
Schatten class.}
\date{\today}
\begin{abstract}
We characterize operator-theoretic properties
(boundedness, compactness, and Schatten class membership) of Toeplitz
operators with positive measure symbols on Bergman spaces of   holomorphic hermitian line bundles  over  K\"ahler Cartan-Hadamard manifolds in terms of geometric or operator-theoretic properties of measures.
\end{abstract} \maketitle
\section{Introduction and statment of results}
The purpose of this paper is to extend the standard theory dealing with
boundedness, compactness, and Schatten class membership of Toeplitz operators
with nonnegative measure symbols on  generalized Bargman-Fock spaces [6,12,14,15,16,21,22,25,30] to Bergman spaces  of holomorphic sections of hermitian holomorphic line bundles over  K\"ahler Cartan-Hadamard manifolds. As an application, we give a characterization  of self-holomorphic maps whose composition operators  bounded, compact or belongs to the Schatten ideal class which extend  previous results for generalized Bargman-Fock spaces [4,27,28,29,34]. \\
Let $(M,g)$ be a complex  hermitian manifold and $(L,h)\longrightarrow M$ be an  holomorphic hermitian line bundle. For $p\in ]0,\infty]$, define $\mathcal{F}^{p}(M,L)$ the $\mathbb{C}$-vector space of holomorphic sections $ s : M\longrightarrow L$ such that
$$
\Vert s\Vert_{2}:=\Bigl(\int_{M}\vert s\vert_{h}^{p}dv_{g}\Bigr)^{1\over p} < \infty
$$
Let  $P$ the orthogonal projection from the Hilbert space of $L^{2}(M,L)$ onto its closed subspace $\mathcal{F}^{2}(M,L)$. Let  $K\in C^{\infty}(M\times M,L\otimes \overline{L})$  the reproducing ( or Bergman ) kernel of $P$, that is
$$
K(z,w)=\sum_{j=1}^{d}s_{j}(z)\otimes\overline{ s_{j}(w)}
$$
where $\overline{L}$  is the conjugate bundle of $L$,  $(s_{j})$ is an orthonormal basis for $\mathcal{F}^{2}(M,L)$ and $ d=\hbox{\sl dim}\mathcal{F}^{2}(M,L)\leq\infty$. \\ \\
The first result of this paper is a pointwise  estimate for the Bergman kernel of $L$ in spirit of  those obtained in [1,5,20] for $n=1$ and in [8,18,26] for $n\geq 2$.
\begin{thm} Let $(M,g)$ be a  Stein K\"ahler manifold with bounded geometry.
Let $(L,h)\longrightarrow (M,g)$ be a hermitian holomorphic line bundle with bounded curvature such that
$$
c(L)+Ricci(g)\geq a\omega_{g}
$$
for some positive constant $a$. There are constants $\alpha,\ C > 0$ such that for all $z,w \in M$ ,
$$
\vert K(z,w)\vert\leq Ce^{-\alpha d_{g}(z,w)}
$$
\end{thm}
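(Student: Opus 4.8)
The plan is to deduce the pointwise bound from a weighted $L^2$ bound on the kernel, using throughout that the geometry is uniform. First I would record the two global consequences of the hypotheses that do the real work. From bounded geometry and the bounded curvature of $(L,h)$ one gets a uniform sub-mean value inequality: there are $r_0,C_0>0$ so that every holomorphic section $s$ satisfies $|s(z)|_h^2 \le C_0\int_{B_g(z,r_0)}|s|_h^2\,dv_g$. Applied to the extremal characterization $K(z,z)=\sup\{|s(z)|_h^2:\|s\|_2\le 1\}$ this already yields the on-diagonal bound $K(z,z)\le C$, and, more importantly, it is the device that will convert any weighted $L^2$ control of $K(\cdot,w)$ into a pointwise bound.

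The analytic engine is H\"ormander's $L^2$-estimate in its Bochner--Kodaira--Nakano form. The hypothesis $c(L)+Ricci(g)\ge a\omega_g$ is exactly the Nakano positivity that, together with the completeness guaranteed by the Cartan--Hadamard condition and the vanishing of higher cohomology on the Stein manifold $M$, makes $\bar\partial$ solvable on $L$-valued $(0,1)$-forms with the estimate $\|u\|_2^2\le a^{-1}\|f\|_2^2$ for $\bar\partial u=f$. Equivalently, the Kodaira Laplacian $\Box$ has a spectral gap, $\Box\ge a$ on forms of positive degree, and the Bergman projection is represented by the Kohn formula $P=I-\bar\partial^{*}\Box^{-1}\bar\partial$, whose Schwartz kernel is $K$.

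From here I would extract the decay by a Combes--Thomas (Agmon) weighting. Fix $w$ and let $\phi$ be a smoothing of $d_g(\cdot,w)$ with $|\nabla\phi|_g\le 1$, truncated to stay bounded. Conjugating the Laplacian, $\Box^{\,t}:=e^{t\phi}\Box\,e^{-t\phi}$ differs from $\Box$ only by first-order terms of size $O(t)$ with coefficients bounded by bounded geometry, so for $t=\alpha$ small relative to $\sqrt a$ the gap survives and $\|(\Box^{\,\alpha})^{-1}\|\le C$. This is the step I expect to be the main obstacle, and it is where both hypotheses are genuinely needed: the Cartan--Hadamard condition makes $d_g(\cdot,w)$ smooth and convex away from $w$, so that $i\partial\bar\partial\phi$ is controlled from below and the admissible weight does not destroy Nakano positivity, while bounded geometry is what forces the perturbation constants, and the resulting $\alpha,C$, to be uniform in $w$. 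The uniform resolvent bound gives $\int_M|(\Box^{-1}\eta)(z)|_h^2\,e^{2\alpha d_g(z,w)}\,dv_g(z)\le C\|\eta\|_2^2$, hence exponential off-diagonal decay of the kernel of $\Box^{-1}$.

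Finally I would transfer this decay through the Kohn formula. For $z\ne w$ the identity term in $P=I-\bar\partial^{*}\Box^{-1}\bar\partial$ contributes nothing off the diagonal, so $K(z,w)$ equals, up to sign, the kernel of $\bar\partial^{*}\Box^{-1}\bar\partial$; since $\bar\partial$ and $\bar\partial^{*}$ are first-order local operators with coefficients bounded by bounded geometry, sandwiching them preserves the exponential weight at the slightly reduced rate $\alpha$. This produces the weighted bound $\int_M|K(z,w)|_h^2\,e^{2\alpha d_g(z,w)}\,dv_g(z)\le C$, uniformly in $w$, and feeding it back into the sub-mean value inequality of the first step yields $|K(z,w)|\le Ce^{-\alpha d_g(z,w)}$, as claimed. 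The symmetry $|K(z,w)|=|K(w,z)|$ lets one symmetrize the rate in $z$ and $w$ if desired.
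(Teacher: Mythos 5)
Your strategy is sound, but it is a genuinely different route from the paper's. The paper does not go through the Kodaira Laplacian at all: it fixes a cutoff $\chi$ supported in $B_g(w,\delta/2)$, writes $|K(z,w)|=|s_z(w)|\preceq\Vert s_z\Vert_{L^2(\chi dv_g)}$ by the sub-mean value inequality, and then dualizes, so that the quantity to bound becomes $\sup_\sigma|P(\chi\sigma)(z)|$ over $\sigma$ normalized in $L^2(\chi dv_g)$. Since $\chi(z)=0$, $P(\chi\sigma)(z)=-u_\sigma(z)$ where $u_\sigma$ is the \emph{minimal} $L^2$ solution of $\bar\partial u=\bar\partial\chi\cdot\sigma$, and the decay is extracted by applying the twisted McNeal--Varolin estimate (Theorem 2.12) with the weight $e^{-\epsilon\Phi_z}$ built from the regularized distance function of Lemma 2.8: the data $\bar\partial\chi\cdot\sigma$ lives near $w$, the weight is small there, and $u_\sigma$ is holomorphic near $z$ so the sub-mean value inequality converts the weighted $L^2$ bound into the pointwise one. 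Your route replaces this duality-plus-minimal-solution mechanism by a Combes--Thomas conjugation of the resolvent $\Box^{-1}$ and the Kohn formula $P=I-\bar\partial^*\Box^{-1}\bar\partial$; this is the Ma--Marinescu style argument. What the paper's approach buys is economy: it needs only scalar weighted $\bar\partial$-estimates on $(0,1)$-forms and the orthogonality $u_\sigma\perp\mathcal{F}^2$, with no resolvent theory and no elliptic regularity for $\Box$. What yours buys is robustness: the spectral-gap formulation generalizes to higher-degree forms and to non-Kähler settings, at the price of having to justify the operator-theoretic steps carefully.

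Two points in your write-up need repair. First, you invoke the Cartan--Hadamard hypothesis twice (for completeness, and for smoothness and convexity of $d_g(\cdot,w)$), but this theorem does not assume Cartan--Hadamard; it assumes only Stein K\"ahler with bounded geometry. Completeness already follows from bounded geometry, and the correct substitute for the smooth distance is the regularized exhaustion $\Phi_z$ of Lemma 2.8, which has $|\partial\Phi_z|_g$ and $i\partial\bar\partial\Phi_z$ uniformly bounded; convexity is not needed for the conjugation argument, only these bounds. Second, the transfer step through $P=I-\bar\partial^*\Box^{-1}\bar\partial$ is the most compressed part of your sketch: $\bar\partial$ and $\bar\partial^*$ are unbounded, so ``sandwiching preserves the weight'' requires uniform local elliptic (G\aa rding-type) estimates to trade the weighted $L^2$ control of $\Box^{-1}\eta$ for control of its first derivatives, and one must check that the formula applies to the relevant test sections. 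Bounded geometry supplies the uniform constants, so this can be completed, but as written it is an assertion rather than an argument.
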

For a positive measure $\mu$,  the  Toeplitz operator $T_{\mu}$ with symbol $\mu$ is defined formaly by
$$
T_{\mu}s(z)=\int_{M} K(z,w)\bullet s(w)d\mu(w)
$$
where $z\rightarrow K(z,w)\bullet s(w) \in L_{z}$ is the holomorphic section of $L$ defined
$$
K(z,w)\bullet s(w):=\sum_{j=1}^{d}<s(w),s_{j}(w)>_{L_{w}}s_{j}(z)
$$
Let $\tilde{\mu} : M\rightarrow \mathbb{R}^{+}$ the Berezin transform of $\mu$ :
$$
\tilde{\mu}(z):=\int_{M}\vert k_{z}(w)\vert^{2}d\mu(w)
$$
where
$$
k_{z}(w):={K(w,z)\over\sqrt{\vert K(z,z)\vert}}
$$
Let  $T : H_{1}\rightarrow H_{2}$ be a compact operator  from two Hilbert spaces $H_{1}$ and $H_{2}$ and
$$
Tf=\sum_{n=0}^{\infty}\lambda_{n}<f,e_{n}>\sigma_{n},\quad f\in H_{1},
$$
 its Schmidt decomposition where $(e_{n})$  ( resp. $(\sigma_{n})$)  is an orthonormal basis of $H_{1}$ ( resp. $H_{2}$  ) and
$(\lambda_{n})$ a sequence with $\lambda_{n} > 0$ and $\lambda_{n}\rightarrow 0$ ( see [30]).
For $0 < p \leq \infty$, the compact operator $T$ belongs to the Schatten-von Neumann $p$-class $\mathcal{S}_{p}(H_{1},H_{2})$ if and only if
$$
\Vert T\Vert_{\mathcal{S}_{p}}^{p}:=\sum_{n=0}^{\infty}\lambda_{n}^{p} < \infty
$$
Let $(N,\omega_{N})$ be an herimitian manifold. Let $\Phi : N\rightarrow M$ be a holomorphic map and \begin{eqnarray*}
C_{\Phi} : \mathcal{F}^{2}(M,L)&\longrightarrow &\mathcal{F}^{2}(N,\Phi^{*}L)\\
s&\longrightarrow & s\circ\Phi
\end{eqnarray*}
the composition operator associated to $\Phi$. We define the transform $B_{\Phi}$
(related to the usual Berezin transform) associated to $\Phi$ to be a function on $M$ as follows
$$
B_{\Phi}(z)^{2}=\int_{M}\vert K(z,w)\vert^{2}d\nu_{\Phi}(w)
$$
where $\nu_{\Phi}$ is the pull-back measure defined as follows : for all Borel set $E\subset M$
$$
\nu_{\Phi}(E)=\int_{N}\mathsf{1}_{\Phi^{-1}(E)}(w)dv_{\omega_{N}}(w)
$$
Our second result of this paper is the characterization  of operator-theoretic properties (boundedness, compactness, and Schatten class membership)
 of Toeplitz operators with positive measure symbols on Bergman space of holomorpic sections which extend those for generalized Bargman-Fock spaces.
\begin{thm} Let $(M,g)$ be a K\"ahler Cartan-Hadamard manifold  with bounded geometry and uniformly subexponentially volume growth. Let $(L,h)\longrightarrow (M,g)$ be a  holomorphic hermitian line bundle with bounded curvature such that
$$
 c(L)+ Ricci(g)\geq a\omega_{g}
$$
for some positive constant $a$. Let $\mu$ be a positive measure on $M$. Let $p\in [1,+\infty]$. The following  conditions are equivalent \\
(a) The  operator $T_{\mu} : \mathcal{F}^{p}(M,L)\longrightarrow\mathcal{F}^{p}(M,L)$ is bounded $(1\leq p\leq\infty)$.\\
(b) $\mu$ is a  Carleson measure.\\
(c) $ \tilde{\mu}$ is bounded on $M$.\\
(d) There exists $\delta > 0$ such that  the function $z\rightarrow\mu(B_{g}(z,\delta))$ is bounded.
 \end{thm}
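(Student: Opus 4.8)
The plan is to reduce everything to three pointwise facts about $K$ extracted from Theorem~1.1 together with the standing hypotheses (bounded geometry, the positivity $c(L)+Ricci(g)\ge a\omega_g$, and subexponential volume growth), and then to run a Schur-type argument for boundedness and a collection of testing arguments on the normalized reproducing kernels $k_z$. First I would fix a uniformly discrete $\delta$-lattice $\{a_j\}$ furnished by bounded geometry, so that the balls $B_g(a_j,\delta)$ cover $M$ with uniformly bounded overlap and $v_g(B_g(z,r))\asymp 1$ on this scale. From the hypotheses I would record the kernel facts I need: (i) a two-sided on-diagonal bound $0<c_1\le|K(z,z)|\le c_2<\infty$, the upper bound being Theorem~1.1 at $w=z$ and the lower bound coming from a uniform peak-section / $L^2$-extension estimate under $c(L)+Ricci(g)\ge a\omega_g$; (ii) a near-diagonal lower bound $|K(w,z)|\ge c>0$ for $w\in B_g(z,\delta)$, from (i) and a uniform gradient estimate for $K$ under bounded curvature; and (iii) the decay $|K(z,w)|\le Ce^{-\alpha d_g(z,w)}$. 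Combining (iii) with subexponential growth gives $\sup_w\int_M|K(z,w)|\,dv_g(z)<\infty$, and combining (iii) with the lattice and condition (d) gives $\sup_z\int_M|K(z,w)|\,d\mu(w)<\infty$; the same decay shows $\|k_z\|_p\le C$ uniformly in $z$ for every $p\in[1,\infty]$.

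With these in hand the arithmetic of the equivalences is routine. For (a)$\Rightarrow$(c) I would use the identity $\langle T_\mu s,t\rangle=\int_M\langle s,t\rangle_h\,d\mu$ (from the definition of the bullet action and the reproducing property), so that $\tilde\mu(z)=\langle T_\mu k_z,k_z\rangle\le\|T_\mu k_z\|_p\|k_z\|_q\le\|T_\mu\|\,\|k_z\|_p\|k_z\|_q\le C\|T_\mu\|$, where $q$ is conjugate to $p$. For (c)$\Rightarrow$(d) I would discard all of $M$ except $B_g(z,\delta)$ in the formula $\tilde\mu(z)=\tfrac{1}{|K(z,z)|}\int_M|K(w,z)|^2\,d\mu(w)$ and apply the near-diagonal lower bound (ii) with $|K(z,z)|\le c_2$ to get $\tilde\mu(z)\ge c'\mu(B_g(z,\delta))$. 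The same lower bound powers the testing step (b)$\Rightarrow$(d): feeding $s=k_z$ into the Carleson inequality yields $c'\mu(B_g(z,\delta))\le\int_M|k_z|_h^p\,d\mu\le C\|k_z\|_p^p\le C$.

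For (d)$\Rightarrow$(b) I would invoke the submean-value inequality $|s(w)|_h^p\le C\int_{B_g(w,\delta)}|s|_h^p\,dv_g$, which holds for holomorphic sections precisely because $c(L)+Ricci(g)\ge a\omega_g$ controls the Laplacian of $\log|s|_h$; then Fubini converts $\int_M|s|_h^p\,d\mu$ into $\int_M|s(\zeta)|_h^p\,\mu(B_g(\zeta,\delta))\,dv_g(\zeta)\le\bigl(\sup_\zeta\mu(B_g(\zeta,\delta))\bigr)\|s\|_p^p$, which is the Carleson estimate. Finally, for (d)$\Rightarrow$(a) I would estimate $|T_\mu s(z)|\le\int_M|K(z,w)|\,|s(w)|_h\,d\mu(w)$ and apply the cross-measure Schur test with the two kernel bounds from the first step, obtaining $\|T_\mu s\|_{L^p(dv_g)}\le C\bigl(\int_M|s|_h^p\,d\mu\bigr)^{1/p}$, and then close using the Carleson estimate just proved. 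This arranges the cycle (a)$\Rightarrow$(c)$\Rightarrow$(d)$\Rightarrow$(a) together with (b)$\Leftrightarrow$(d), giving the full equivalence.

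The main obstacle, as I see it, is the geometric input (i)--(ii): the uniform lower bounds for $|K(z,z)|$ and for $|K(w,z)|$ near the diagonal are the one ingredient not already contained in Theorem~1.1 and must be extracted from the curvature positivity and bounded geometry, with constants independent of the base point. Secondarily, the cross-measure Schur test in (d)$\Rightarrow$(a) must be set up so that the two endpoints $p=1$ and $p=\infty$ are covered uniformly by the single pair of kernel bounds, rather than treated by separate ad hoc arguments.
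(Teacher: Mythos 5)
Your proposal follows essentially the same route as the paper: the same three kernel facts (uniform two-sided diagonal bounds, the near-diagonal lower bound of Proposition 3.5, and the off-diagonal decay of Theorem 1.1) drive the same Schur/Jensen-type test for (d)$\Rightarrow$(a), the same sub-mean-value-plus-Fubini (lattice) argument for (d)$\Rightarrow$(b), and the same testing of $T_\mu$ against normalized reproducing kernels for the converse directions, with only cosmetic differences in how the cycle of implications is organized (the paper routes everything through (b) via Theorems 4.3 and 4.7, and tests the Carleson condition with the peak sections of Proposition 3.3 rather than with $k_z$). One small correction: the sub-mean-value inequality (Proposition 3.1) is a consequence of the \emph{bounded curvature} hypothesis $-M\omega_g\le c(L)\le M\omega_g$, which allows the local weight to be replaced by a pluriharmonic function up to a uniformly bounded error, and not of the positivity $c(L)+Ricci(g)\ge a\omega_g$, which enters only through the $\bar\partial$-estimates behind the peak sections and the off-diagonal decay.
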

\begin{thm} Let $(M,g)$ be a K\"ahler Cartan-Hadamard manifold  with bounded geometry and uniformly subexponentially volume growth. Let $(L,h)\longrightarrow (M,g)$ be a  holomorphic hermitian line bundle with bounded curvature such that
$$
 c(L)+ Ricci(g)\geq a\omega_{g}
$$
for some positive constant $a$. Let $\mu$ be a positive measure on $M$. Let $p\in [1,\infty]$. The following  conditions are equivalent :\\
(a)  The  operator $T_{\mu} : \mathcal{F}^{2}(M,L)\longrightarrow\mathcal{F}^{2}(M,L)$ is compact.\\
(b)  $\mu$ is a vanishing Carleson measure.\\
(c)  $\displaystyle\lim_{d_{g}(z,z_{0})\rightarrow\infty}\tilde{\mu}(z)=0$.\\
(d)  There exists $\delta > 0$ such that  $\displaystyle\lim_{d_{g}(z,z_{0})\rightarrow\infty}\mu(B_{g}(z,\delta))=0$\\
\end{thm}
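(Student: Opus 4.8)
The plan is to establish the cycle of implications $(a)\Rightarrow(c)\Rightarrow(d)\Rightarrow(b)\Rightarrow(a)$, running parallel to the proof of Theorem 1.2 but replacing every uniform bound by a decay-at-infinity statement and invoking Theorem 1.2 itself to control tail measures. Throughout I will use the sesquilinear identity $\langle T_{\mu}f,g\rangle=\int_{M}\langle f(w),g(w)\rangle_{L_{w}}\,d\mu(w)$, valid for $f,g\in\mathcal{F}^{2}(M,L)$, whose specialization $f=g=k_{z}$ gives $\tilde{\mu}(z)=\langle T_{\mu}k_{z},k_{z}\rangle$.

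For $(a)\Rightarrow(c)$ I would first show that the normalized reproducing kernels form a weak-null family as the base point escapes to infinity. Since $\Vert k_{z}\Vert_{2}=1$ and, for any fixed $u\in M$, Theorem 1.1 yields $|\langle k_{u},k_{z}\rangle|=|K(u,z)|/(\sqrt{|K(u,u)|}\sqrt{|K(z,z)|})\leq C\,e^{-\alpha d_{g}(u,z)}/\sqrt{|K(z,z)|}$, the lower bound $|K(z,z)|\geq c_{0}>0$ furnished by bounded geometry forces $\langle k_{u},k_{z}\rangle\to 0$ as $d_{g}(z,z_{0})\to\infty$; as finite linear combinations of kernels are dense in $\mathcal{F}^{2}(M,L)$, this gives $k_{z}\rightharpoonup 0$ weakly. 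A compact operator maps weak-null families to norm-null ones, so $\Vert T_{\mu}k_{z}\Vert_{2}\to 0$, whence $\tilde{\mu}(z)=\langle T_{\mu}k_{z},k_{z}\rangle\to 0$, which is $(c)$.

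The step $(c)\Rightarrow(d)$ rests on a local lower bound for the kernel. Bounded geometry supplies a radius $\delta>0$, independent of $z$, with $|K(w,z)|\geq\tfrac{1}{2}|K(z,z)|$ for $w\in B_{g}(z,\delta)$; then $|k_{z}(w)|^{2}=|K(w,z)|^{2}/|K(z,z)|\geq\tfrac{1}{4}|K(z,z)|\geq c_{0}/4$ on $B_{g}(z,\delta)$, so that $\tilde{\mu}(z)\geq(c_{0}/4)\,\mu(B_{g}(z,\delta))$, and $\tilde{\mu}(z)\to 0$ forces $\mu(B_{g}(z,\delta))\to 0$. The implication $(d)\Rightarrow(b)$ is then the vanishing analogue of the equivalence $(b)\Leftrightarrow(d)$ of Theorem 1.2: since bounded geometry makes $v_{g}(B_{g}(z,\delta))$ comparable to a fixed constant uniformly in $z$, the decay of the averaging function $z\mapsto\mu(B_{g}(z,\delta))$ at infinity is exactly the vanishing-Carleson condition.

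The substantive implication is $(b)\Rightarrow(a)$, which I would prove by truncation. Fix $z_{0}$ and write $\mu=\mu_{R}+\mu_{R}'$, where $\mu_{R}$ is the restriction of $\mu$ to $B_{g}(z_{0},R)$ and $\mu_{R}'$ its restriction to $M\setminus B_{g}(z_{0},R)$. On the complement, Theorem 1.2 gives $\Vert T_{\mu_{R}'}\Vert\leq C\sup_{z}\mu_{R}'(B_{g}(z,\delta))\leq C\sup_{d_{g}(z,z_{0})>R-\delta}\mu(B_{g}(z,\delta))$, which tends to $0$ as $R\to\infty$ by $(d)$. For the core, $\overline{B_{g}(z_{0},R)}$ is compact (Hopf--Rinow on the Cartan--Hadamard manifold) and $\mu_{R}$ is finite; the embedding $\iota:\mathcal{F}^{2}(M,L)\to L^{2}(\mu_{R})$ is compact by a normal-families argument, since the pointwise bound $|s(z)|\leq\Vert s\Vert_{2}\sqrt{|K(z,z)|}$ makes any $\Vert\cdot\Vert_{2}$-bounded sequence uniformly bounded, hence (by Montel) uniformly convergent on $\overline{B_{g}(z_{0},R)}$ along a subsequence, and uniform convergence on the finite-measure set $B_{g}(z_{0},R)$ yields convergence in $L^{2}(\mu_{R})$. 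As $T_{\mu_{R}}=\iota^{*}\iota$, it is compact, so $T_{\mu}$ is a norm limit of compact operators and therefore compact. I expect this last step to be the main obstacle: one must simultaneously secure compactness of the truncated operator through the Montel mechanism and control the tail uniformly in the base point, the latter being precisely where the uniformly subexponential volume growth and bounded-geometry hypotheses, through Theorem 1.2, are indispensable.
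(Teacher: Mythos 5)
Your proposal is correct, and three of its four arrows track the paper closely: your $(a)\Rightarrow(c)$ is the paper's argument that the normalized kernels $k_{z}$ tend weakly to zero (via the off-diagonal decay of Theorem 1.1 and $\vert K(z,z)\vert\asymp 1$) so that compactness kills $\tilde{\mu}(z)=\langle T_{\mu}k_{z},k_{z}\rangle$; your $(c)\Rightarrow(d)$ is the near-diagonal lower bound of Proposition 3.5; and your $(d)\Rightarrow(b)$ is Theorem 4.6. The genuine divergence is in $(b)\Rightarrow(a)$. The paper takes the operator-theoretic definition of vanishing Carleson (compactness of the inclusion $\imath_{\mu}:\mathcal{F}^{2}(M,L)\to L^{2}(M,L,\mu)$) and proves, by a Cauchy--Schwarz/Schur estimate using $\int_{M}\vert K(z,w)\vert\,d\mu(w)\preceq 1$ and $\int_{M}\vert K(z,w)\vert\,dv_{g}(z)\preceq 1$, that $\Vert T_{\mu}s\Vert_{2}^{2}\preceq\int_{M}\vert s\vert^{2}\,d\mu=\Vert\imath_{\mu}s\Vert^{2}$, so $T_{\mu}$ inherits compactness from $\imath_{\mu}$ applied to weakly null sequences. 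You instead truncate $\mu=\mu_{R}+\mu_{R}'$, control $\Vert T_{\mu_{R}'}\Vert$ by the tail supremum of $\mu(B_{g}(\cdot,\delta))$, and get compactness of $T_{\mu_{R}}=\iota^{*}\iota$ from Montel on the compact core, exhibiting $T_{\mu}$ as a norm limit of compact operators. Both are sound. The paper's route is shorter and needs only the qualitative compactness of $\imath_{\mu}$; yours is more self-contained in that it runs directly from the geometric condition $(d)$ and makes visible exactly where the uniform tail control enters, but it does require the \emph{quantitative} form of Theorem 1.2, namely $\Vert T_{\nu}\Vert\preceq\sup_{z}\nu(B_{g}(z,\delta))$, which the statement of Theorem 1.2 does not record explicitly --- you would need to note that the chain of constants in Theorem 4.3 and in the proof of Theorem 1.2 is linear in the Carleson data, which it is, so this is a presentational rather than a mathematical gap.
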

\begin{thm} Let $(M,g)$ be a K\"ahler Cartan-Hadamard manifold  with bounded geometry and uniformly subexponentially volume growth. Let $(L,h)\longrightarrow (M,g)$ be a  holomorphic hermitian line bundle with bounded curvature such that
$$
 c(L)+ Ricci(g)\geq a\omega_{g}
$$
for some positive constant $a$. Let $\mu$ be a positive measure on $M$. The following  conditions are equivalent :\\
(a)  The operator $T_{\mu} : \mathcal{F}^{2}(M,L)\longrightarrow\mathcal{F}^{2}(M,L)$ belong to $\mathcal{S}_{p}\ ( 0 < p \leq\infty)$.\\
(b)  $\tilde{\mu}\in L^{p}(M,dv_{g})$.\\
(c)  There exists $\delta > 0$ such that the function $z\rightarrow\mu(B_{g}(z,\delta))\in L^{p}(M,dv_{g})$.\\
(d)  There exists $\delta > 0$ and an $r$-lattice  $(a_{j})$
 such that  $\{\mu(B_{g}(a_{j},\delta))\}\in \ell^{p}(\mathbb{N})$.\\
 Morever there is a positive constant $C$ such that
 $$
{1\over C}\Vert \tilde{\mu}\Vert_{L^{p}(M,dv_{g})}\leq\Vert T_{\mu}\Vert_{\mathcal{S}_{p}}\leq C\Vert \tilde{\mu}\Vert_{L^{p}(M,dv_{g})}
$$
\end{thm}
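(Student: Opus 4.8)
The plan is to transfer the classical Luecking--Zhu scheme for Schatten-class Toeplitz operators to the present geometric setting, with the exponential decay of Theorem 1.1 playing exactly the role that the Gaussian decay of the kernel plays on Fock spaces. The whole theorem is reduced to one discrete model, the sequence $\{\mu(B_g(a_j,\delta))\}$ attached to a fixed $r$-lattice $(a_j)$. I would first dispose of the purely measure-theoretic equivalences (b)$\Leftrightarrow$(c)$\Leftrightarrow$(d). Under bounded geometry the diagonal $|K(z,z)|$ is bounded above and below by positive constants, so by the definition of the Berezin transform and Theorem 1.1,
$$
\tilde\mu(z)\asymp\int_M e^{-2\alpha d_g(z,w)}\,d\mu(w).
$$
Covering $M$ by the balls of the lattice and using the uniformly subexponential volume growth to sum the geometric series $\sum_k e^{-2\alpha kr}\,\mathrm{vol}\{w:kr\le d_g(z,w)<(k+1)r\}$, one checks that $\tilde\mu$, the averaged function $z\mapsto\mu(B_g(z,\delta))$, and the lattice sequence are mutually comparable in every $L^p$ (resp. $\ell^p$) norm; the subexponential growth is precisely what licenses the Fubini/Minkowski exchange of summation and integration.

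The substance is then the equivalence with (a) and the two-sided estimate $\|\tilde\mu\|_{L^p}\asymp\|T_\mu\|_{\mathcal{S}_p}$, which I would split into an upper and a lower bound. For the upper bound $\|T_\mu\|_{\mathcal{S}_p}\lesssim\|\tilde\mu\|_{L^p}$, fix a Borel partition $M=\bigsqcup_j D_j$ subordinate to the lattice with $D_j\subset B_g(a_j,\delta)$, write $\mu=\sum_j\mu_j$ with $\mu_j=\mu|_{D_j}$, so that $T_\mu=\sum_j T_{\mu_j}$. Because the kernel varies slowly on a small ball (bounded geometry plus the curvature bounds controlling the derivatives of $K$), each $T_{\mu_j}$ is a controlled perturbation of a rank-one operator of size $\mu(B_g(a_j,\delta))$, whence $\|T_{\mu_j}\|_{\mathcal{S}_p}\lesssim\mu(B_g(a_j,\delta))$ for every $p$. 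For $0<p\le1$ the $p$-triangle inequality $\|\sum_j T_{\mu_j}\|_{\mathcal{S}_p}^p\le\sum_j\|T_{\mu_j}\|_{\mathcal{S}_p}^p$ closes the estimate at once. For $p>1$ the naive triangle inequality is too lossy, so I would split the lattice into finitely many $R$-separated subfamilies and use that the cross Gram entries $|\langle k_{a_i},k_{a_j}\rangle|\lesssim e^{-\alpha d_g(a_i,a_j)}$ decay (Theorem 1.1); this almost-orthogonality of ranges and co-ranges upgrades the bound to $\|\sum_j T_{\mu_j}\|_{\mathcal{S}_p}^p\lesssim\sum_j\|T_{\mu_j}\|_{\mathcal{S}_p}^p$, and the first step converts the right-hand side into $\|\tilde\mu\|_{L^p}^p$.

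For the lower bound $\|\tilde\mu\|_{L^p}\lesssim\|T_\mu\|_{\mathcal{S}_p}$ with $p\ge1$, the identity $\tilde\mu(a_j)=\langle T_\mu k_{a_j},k_{a_j}\rangle$ shows that one must dominate the diagonal entries of the positive operator $T_\mu$ against the frame $\{k_{a_j}\}$. By Theorem 1.1 the Gram matrix of $\{k_{a_j}\}$ has exponentially decaying off-diagonal entries, so a Schur test on the lattice (again using subexponential growth) gives a uniform Bessel bound; hence the synthesis operator $S$ of the frame is bounded. Writing $\langle T_\mu k_{a_j},k_{a_j}\rangle=\langle S^{*}T_\mu S\,e_j,e_j\rangle$ for an orthonormal basis $(e_j)$ and applying the majorization inequality for diagonal entries of a positive operator yields $\sum_j\tilde\mu(a_j)^p\le\|S^{*}T_\mu S\|_{\mathcal{S}_p}^p\lesssim\|T_\mu\|_{\mathcal{S}_p}^p$, which together with the discretization of step one gives the claim.

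I expect the genuine obstacle to be the range $0<p<1$ of this lower bound: there convexity runs the wrong way, the majorization inequality reverses, and Hilbert-space duality is unavailable, so the synthesis-operator argument collapses. The plan is to follow Luecking's device: from $T_\mu\in\mathcal{S}_p$ one passes to $RT_\mu R^{*}\in\mathcal{S}_p$ for a bounded sampling operator $R$ built from the lattice, and then uses a Khinchine/random-sign averaging to decouple the diagonal from the off-diagonal contributions and extract $\sum_j\mu(B_g(a_j,\delta))^p\lesssim\|T_\mu\|_{\mathcal{S}_p}^p$; controlling the off-diagonal terms by the exponential kernel decay and verifying the Bessel bounds uniformly through bounded geometry and subexponential growth is the technical heart of the matter. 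Finally the endpoints are read off from the earlier results: $p=\infty$, with $\mathcal{S}_\infty$ interpreted as the bounded operators, is precisely Theorem 1.2, while assembling the upper and lower bounds with the discretization of the first paragraph yields both the equivalence of (a)--(d) and the norm comparison for all $0<p<\infty$.
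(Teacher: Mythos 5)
Your plan is the Luecking--Zhu discretization scheme, which is also the skeleton of the paper's proof, but you implement the three main estimates differently, and it is worth recording the contrast. For the upper bound with $p\ge 1$ the paper does not decompose $T_\mu$ at all: it dominates $T_\mu\preceq T_{\phi_\delta}$ where $\phi_\delta(z)=\mu(B_g(z,\delta))$ is a \emph{function} symbol, and then shows $\|T_\phi\|_{\mathcal{S}_p}\preceq\|\phi\|_{L^p}$ by complex interpolation of the Schatten ideals between the trace-class bound at $p=1$ (via the diagonal estimate $|K(w,w)|\asymp 1$) and the operator-norm bound at $p=\infty$; this is cleaner than your almost-orthogonality argument for $p>1$ and avoids having to quantify how far each $T_{\mu_j}$ is from rank one. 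For the lower bound with $p\ge 1$ the paper uses the pointwise inequality $\langle T_\mu^p s_z,s_z\rangle\ge\langle T_\mu s_z,s_z\rangle^p$ for the positive operator $T_\mu$ together with the trace formula $\mathrm{tr}(S)\asymp\int_M\widetilde{S}\,dv_g$ (Lemma 5.2), which gives $\int_M\tilde\mu^p\preceq\mathrm{tr}(T_\mu^p)=\|T_\mu\|_{\mathcal{S}_p}^p$ directly, with no frame, no synthesis operator and no passage through the lattice; your Bessel-bound route is correct but longer. For the necessity when $0<p<1$ the paper's argument is the deterministic version of what you call Luecking's device: it forms $R=H^*T_\nu H$ with $He_m=s_{a_{k_m}}$ over an $r$-separated sublattice, splits $R=\Delta+E$ into diagonal and off-diagonal parts, bounds $\|\Delta\|_{\mathcal{S}_p}^p$ from below by $\sum_m\mu(B_g(a_{k_m},\delta))^p$ and $\|E\|_{\mathcal{S}_p}^p$ from above by $e^{-\alpha pr/2}$ times the same sum using the exponential kernel decay, and takes $r$ large; no Khinchine or random signs are needed because the off-diagonal term is already summable by Theorem 1.1.

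The one step of your outline that would fail as written is the claim that $\|T_{\mu_j}\|_{\mathcal{S}_p}\preceq\mu(B_g(a_j,\delta))$ ``for every $p$'' because $T_{\mu_j}$ is ``a controlled perturbation of a rank-one operator.'' For $p\ge 1$ this follows from the trace bound, but for $0<p<1$ it does not: $T_{\mu_j}$ is in general of infinite rank, $\|\cdot\|_{\mathcal{S}_p}\ge\|\cdot\|_{\mathcal{S}_1}$ on that range, and a small-trace operator can have arbitrarily large $\mathcal{S}_p$ quasi-norm, so the heuristic gives nothing. To close this you need either the inequality $\|T\|_{\mathcal{S}_p}^p\le\sum_{m,k}|\langle Te_m,e_k\rangle|^p$ for $0<p\le 1$ applied with the reproducing-kernel frame, or a domination $T_{\mu_j}\preceq\mu(D_j)\,T_{\mathbf{1}_{B_g(a_j,2\delta)}}$ (from the sub-mean-value inequality of Proposition 3.1) together with a uniform bound on $\|T_{\mathbf{1}_{B}}\|_{\mathcal{S}_p}$ for balls of fixed radius; neither is automatic. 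It is only fair to note that the paper's own treatment of this direction for $0<p<1$ leans on its Lemma 5.5, which is stated and proved only for $p\ge 1$, so the sufficiency for small $p$ is the delicate point on both sides.
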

\noindent For boundedness, compactness, and Schatten class membership of composition operators, we have the following result which exend those for Bargman-Fock spaces.
\begin{thm} Let $(M,g)$ be a K\"ahler Cartan-Hadamard manifold  with bounded geometry and uniformly subexponentially volume growth. Let $(L,h)\longrightarrow (M,g)$ be a  holomorphic hermitian line bundle with bounded curvature such that
$$
 c(L)+ Ricci(g)\geq a\omega_{g}
$$
for some positive constant $a$. Let $(N,\omega_{N})$ be an Hermitian manifold. Let $\Phi : M\rightarrow M$ be a holomorphic map and \begin{eqnarray*}
C_{\Phi} : \mathcal{F}^{2}(M,L)&\longrightarrow &\mathcal{F}^{2}(N,\Phi^{*}L)\\
s&\longrightarrow & s\circ\Phi
\end{eqnarray*}
the composition operator associated to $\Phi$. Let $0< p < \infty$. Then \\
(i) $C_{\Phi}$ is bounded if and only if $ \nu_{\Phi}$ is a Carleson measure for
$\mathcal{F}^{2}(M,L)$ if and only if $B_{\Phi}$ is bounded.\\
(ii) $C_{\Phi}$ is compact if and only if $ \nu_{\Phi}$ is a vanishing Carleson measure for $\mathcal{F}^{2}(M,L)$ if and only if $ B_{\Phi}$ vanishes at infinity.\\
(iii) $C_{\Phi}$ is in Schatten $p$-class if and only if $ B_{\Phi}\in L^{p}(M,dv_{g})$. Morever there is a positive constant $C$ such that
$$
{1\over C}\Vert B_{\Phi}\Vert_{L^{p}(M,dv_{g})}\leq\Vert C_{\Phi}\Vert_{\mathcal{S}_{p}}\leq C\Vert B_{\Phi}\Vert_{L^{p}(M,dv_{g})}
$$
\end{thm}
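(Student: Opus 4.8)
The plan is to reduce every assertion about the composition operator $C_{\Phi}$ to the Toeplitz operator $T_{\nu_{\Phi}}$, which is completely described by Theorems 1.2, 1.3 and 1.4, by first establishing a single operator identity. The starting point is the push-forward (change of variables) formula defining $\nu_{\Phi}$: for $s\in\mathcal{F}^{2}(M,L)$,
$$
\Vert C_{\Phi}s\Vert^{2}_{\mathcal{F}^{2}(N,\Phi^{*}L)}=\int_{N}\vert s\circ\Phi\vert^{2}_{\Phi^{*}h}\,dv_{\omega_{N}}=\int_{N}\vert s(\Phi(w))\vert^{2}_{h}\,dv_{\omega_{N}}(w)=\int_{M}\vert s\vert^{2}_{h}\,d\nu_{\Phi}.
$$
On the other hand, the reproducing property $\overline{K(z,w)}=K(w,z)$ gives, for any positive measure $\mu$, the quadratic-form identity $\langle T_{\mu}s,s\rangle=\int_{M}\vert s\vert^{2}_{h}\,d\mu$. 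Taking $\mu=\nu_{\Phi}$ shows $\langle C_{\Phi}^{*}C_{\Phi}s,s\rangle=\langle T_{\nu_{\Phi}}s,s\rangle$ for all $s$; since both operators are self-adjoint and positive on $\mathcal{F}^{2}(M,L)$, polarization yields the key identity $C_{\Phi}^{*}C_{\Phi}=T_{\nu_{\Phi}}$. (In the unbounded regime this is read at the level of the associated closed forms, both being simultaneously finite or infinite on the dense set where they are defined.)

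Next I would record the pointwise comparison between $B_{\Phi}$ and the Berezin transform. Using the Hermitian symmetry $\vert K(z,w)\vert=\vert K(w,z)\vert$ of the kernel and the definition $k_{z}(w)=K(w,z)/\sqrt{\vert K(z,z)\vert}$ one gets
$$
B_{\Phi}(z)^{2}=\int_{M}\vert K(z,w)\vert^{2}\,d\nu_{\Phi}(w)=\vert K(z,z)\vert\,\widetilde{\nu_{\Phi}}(z).
$$
The bounded geometry hypotheses together with the curvature lower bound $c(L)+Ricci(g)\geq a\omega_{g}$ furnish the uniform on-diagonal bound $0<c\leq\vert K(z,z)\vert\leq C<\infty$ underlying Theorem 1.1, whence $B_{\Phi}(z)^{2}\asymp\widetilde{\nu_{\Phi}}(z)$ pointwise. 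Consequently $B_{\Phi}$ is bounded iff $\widetilde{\nu_{\Phi}}$ is, $B_{\Phi}$ vanishes at infinity iff $\widetilde{\nu_{\Phi}}$ does, and $B_{\Phi}\in L^{p}(M,dv_{g})$ iff $\widetilde{\nu_{\Phi}}\in L^{p/2}(M,dv_{g})$, with $\Vert B_{\Phi}\Vert_{L^{p}}^{2}\asymp\Vert\widetilde{\nu_{\Phi}}\Vert_{L^{p/2}}$.

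With these two facts each equivalence follows by spectral transfer. For (i), $\Vert C_{\Phi}\Vert^{2}=\Vert C_{\Phi}^{*}C_{\Phi}\Vert=\Vert T_{\nu_{\Phi}}\Vert$, so $C_{\Phi}$ is bounded iff $T_{\nu_{\Phi}}$ is, which by Theorem 1.2 is equivalent to $\nu_{\Phi}$ being Carleson and to $\widetilde{\nu_{\Phi}}$ (hence $B_{\Phi}$) being bounded. For (ii), via the polar decomposition $C_{\Phi}=U\vert C_{\Phi}\vert$ with $\vert C_{\Phi}\vert=(C_{\Phi}^{*}C_{\Phi})^{1/2}$, the operator $C_{\Phi}$ is compact iff $T_{\nu_{\Phi}}$ is compact; Theorem 1.3 identifies this with $\nu_{\Phi}$ being a vanishing Carleson measure and with $B_{\Phi}$ vanishing at infinity. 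For (iii), the singular values obey $s_{n}(C_{\Phi})=\lambda_{n}(T_{\nu_{\Phi}})^{1/2}$, so $\Vert C_{\Phi}\Vert_{\mathcal{S}_{p}}^{p}=\Vert T_{\nu_{\Phi}}\Vert_{\mathcal{S}_{p/2}}^{p/2}$, i.e. $\Vert C_{\Phi}\Vert_{\mathcal{S}_{p}}^{2}=\Vert T_{\nu_{\Phi}}\Vert_{\mathcal{S}_{p/2}}$; applying Theorem 1.4 at index $p/2$ gives $\Vert T_{\nu_{\Phi}}\Vert_{\mathcal{S}_{p/2}}\asymp\Vert\widetilde{\nu_{\Phi}}\Vert_{L^{p/2}}\asymp\Vert B_{\Phi}\Vert_{L^{p}}^{2}$, which is exactly $C_{\Phi}\in\mathcal{S}_{p}\Leftrightarrow B_{\Phi}\in L^{p}$ together with the two-sided norm estimate.

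The main obstacle I anticipate is making the first step rigorous rather than formal: one must verify that $s\circ\Phi$ genuinely defines a measurable square-integrable section of $\Phi^{*}L$ over $N$ and that $\nu_{\Phi}$ is a well-defined $\sigma$-finite Borel measure, so that $C_{\Phi}$ is densely defined and its adjoint exists before the identity $C_{\Phi}^{*}C_{\Phi}=T_{\nu_{\Phi}}$ is invoked; boundedness is precisely finiteness of the form $s\mapsto\int_{M}\vert s\vert^{2}_{h}\,d\nu_{\Phi}$ on a dense set, and the unbounded case is disposed of by declaring both sides of each equivalence infinite. Care is also needed with the meaning of $\vert K(z,w)\vert$ as the $h$-induced norm on $L_{z}\otimes\overline{L_{w}}$ and with the on-diagonal estimate $\vert K(z,z)\vert\asymp 1$, which should be extracted from the local analysis already performed for Theorem 1.1; the remaining transfer arguments are standard spectral theory.
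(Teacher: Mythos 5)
Your proposal is correct and follows essentially the same route as the paper: establish the operator identity $C_{\Phi}^{*}C_{\Phi}=T_{\nu_{\Phi}}$ (the paper's Lemma 5.9, proved there via the reproducing property and Fubini rather than by polarization of the quadratic form, but these are the same computation), observe that $B_{\Phi}(z)^{2}=\vert K(z,z)\vert\,\widetilde{\nu_{\Phi}}(z)\asymp\widetilde{\nu_{\Phi}}(z)$ by the uniform diagonal bounds, and then transfer boundedness, compactness and Schatten membership (with the index shift $p\mapsto p/2$, as in the paper's Corollary 5.10) to Theorems 1.2, 1.3 and 1.4. Your added remarks on domains, polar decomposition and singular values only make explicit what the paper leaves implicit.
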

Characterizations of bounded, compact and Schatten class Toeplitz operators with positive measure symbols  on  generalized Bargmann-Fock space  or on weighted Bergman spaces of bounded strongly pseudoconvex domains,  in terms of Carleson measures and the Berezin transform, depend strongly on off-diagonal exponential decay  of the Bergman kernel. In the spirit of [8], we establish a similar   off-diagonal decay of the Bergman kernel associated to holomorphic hermitian line bundles whose curvature is uniformly comparable to the metric form.\\

This paper consists of five sections. In the next section, we will recall
some  definitions and properties  of  K\"ahler manifolds, Bergman Kernel of line bundles, $\bar\partial$-methods and Toeplitz operators. In section 3 we provide useful estimates for Bergman kernel and we prove Theorem 1.1. In section 4 we will prove Theorems 1.2, 1.3. In section 5 we will prove theorems 1.4 and 1.5.
\section{Preliminary}
\subsection{Curvatures in K\"ahlerian Geometry}
Let $(M,J,g)$ be a complex $n$-manifold with a Riemannian metric $g$ which is Hermitian i.e. $$g(JX,JY)=g(X,Y),\  \forall X,Y \in TM\ \hbox{( real tangent vectors )}
$$ and a complex structure $J : TM\rightarrow TM$ i.e $J^{2}=-Id_ {TM}$. Assume furthermore that $g$ is K\"ahler, i.e  the real $2$-form $$\omega_{g}(X,Y)=g(JX,Y)$$ is closed.
In local coordiantes  $z^{1},z^{2},\cdots,z^{n}$ of
$M$
$$
g=\sum_{i,j=1}^{n}g_{i\bar{j}}dz^{i}\otimes d\bar{z^{j}},\quad \omega={\sqrt{-1}\over 2}\sum_{i,j=1}^{n}g_{i\bar{j}}dz^{i}\wedge d\bar{z^{j}}
$$
The coefficients of the curvature tensor $R$ of $g$ are given by
$$
R_{i\bar{j}k\bar{l}}=-{\partial^{2}g_{i\bar{j}}\over\partial z^{k}\partial\bar{z}^{l}}+\sum_{p,q=1}^{n}g^{q\bar{p}}{\partial g_{i\bar{p}}\over\partial z^{k}}{\partial g_{q\bar{j}}\over\partial\bar{z}^{l}}
$$
The sectional curvature of a $2$-plane $\sigma\subset T_{x}M$ is defined as
$$
K(\sigma):=R(X,Y,Y,X)=R(X,Y,JY,JX)
$$
where $\{X,Y\}$ is an orthonormal basis of $\sigma$.
\begin{defn}
We say that $(M,g)$ has non-positive sectional curvature if
$$K(\sigma)\leq 0\quad\hbox{for all}\ 2-\hbox{plane $\sigma\subset TM$}
$$
A Cartan-Hadamad manifold is  a simply conneceted complete manifold with negative sectional curvature. Since each point in a Cartan-Hadamard manifold is a pole then the square of the  distance function at such point is smooth.
 \end{defn}
\noindent The Ricci curvature of $g$ is the $(1,1)$-forme
$$
Ric(g):={i\over 2\pi}\sum_{i,jk,l=1}^{n}g^{k\bar{l}}R_{i\bar{j}k\bar{l}}dz^{i}\wedge d\bar{z}^{j}
$$
In local coordinates
$$
Ric(g)=-{i\over 2\pi}\sum_{i,j=1}^{n}{\partial^{2}\log\det(g_{k\bar{l}})\over\partial z^{i}\partial\bar{z}^{j}}dz^{j}\wedge d\bar{z}^{l}
$$
\begin{defn}
We say  that the Ricci curvature of $(M,g)$ has lower bound $C\in \mathbb{R}$  if
$$
Ric(g)(X,X)\geq C \omega_{g}(X,X)\quad\hbox{for all $X\in T^{(1,0)}M$}
$$
\end{defn}
\noindent Denote by $d_{g}(z,w)$ the Riemannian distance from $z\in X$ to $w\in X$ and $B(z,r)=\{ w\in M\ :\  d_{g}(w,z) < r\}$ the open geodesic ball. The manifold $(M,g)$ is  complete if $(M,d_{g})$ is a complete  metric space.\\  \\
\noindent Given $(M,g)$ a Riemannian manifold, we say that a family $(\Omega_{k})$ of open subsets of $M$ is a uniformly locally finite covering of $M$ if the following holds  \\ $(\Omega_{k})$ is a covering of $M$, and there exits an integer $N$ such that each point $x\in M$ has a neighborhood which intersect at most $N$ of the $\Omega_{k}$. One then has the following Gromov's Packing Lemma [11].
 \begin{lem}
 Let $(M,g)$ be a smooth, compete Riemannian $n$-manifol with Ricci curvature bounded from below by some $K$ real, and let $\rho > 0$ be given. There exists a sequence $(x_{i})$ of points of $M$ such that for every $r\ge\rho$ :\\
 (i) the family $(B_{g}(x_{i},r))$ is a uniformly locally finite covering of $M$, and there is an upper bound for $N$ in terms of $n,r,\rho,$ and $K$\\
 (ii) for any $i\not=j,\ B_{g}(x_{i}{\rho\over 2})\cap B_{g}(x_{j},{\rho\over 2})=\emptyset$
 \end{lem}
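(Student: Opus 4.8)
The plan is to prove this by a maximal packing argument, with the quantitative control on the multiplicity $N$ supplied by the Bishop--Gromov volume comparison theorem. First I would construct the sequence $(x_i)$: by Zorn's lemma (or a greedy induction) choose a maximal subset $\{x_i\}\subset M$ that is $\rho$-separated, i.e. $d_g(x_i,x_j)\ge\rho$ whenever $i\ne j$. Since $M$ is separable and the balls $B_g(x_i,\rho/2)$ are pairwise disjoint, this set is at most countable, so indexing it as a sequence is legitimate. Separation immediately yields (ii): if $d_g(x_i,x_j)\ge\rho$ then the open balls $B_g(x_i,\rho/2)$ and $B_g(x_j,\rho/2)$ cannot meet. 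Maximality yields the covering half of (i): no point $y\in M$ can satisfy $d_g(y,x_i)\ge\rho$ for every $i$, for otherwise $y$ could be adjoined to the separated set; hence every $y$ lies in some $B_g(x_i,\rho)$, so $\{B_g(x_i,\rho)\}$, and \emph{a fortiori} $\{B_g(x_i,r)\}$ for any $r\ge\rho$, covers $M$.

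The substantive part is the uniform bound on the overlap number $N$. Fix $r\ge\rho$ and a point $x\in M$, and suppose $x$ lies in the balls $B_g(x_{i_k},r)$ for $k=1,\dots,N$, that is $d_g(x,x_{i_k})<r$. Then by the triangle inequality the disjoint balls $B_g(x_{i_k},\rho/2)$ are all contained in $B_g(x,r+\rho/2)$, while conversely $B_g(x,r+\rho/2)\subset B_g(x_{i_k},2r+\rho/2)$ for each $k$. Here is where the Ricci lower bound $\mathrm{Ric}(g)\ge K$ enters: by Bishop--Gromov comparison, for each fixed center $x_{i_k}$ the ratio $\operatorname{vol}B_g(x_{i_k},t)/\operatorname{vol}B_g(x_{i_k},s)$ is dominated by the corresponding ratio $V_K(t)/V_K(s)$ in the simply connected space form carrying the same Ricci bound, where $V_K(t)$ is the volume of a geodesic $t$-ball there. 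Applying this with $t=2r+\rho/2$, $s=\rho/2$ and combining with the two inclusions gives $\operatorname{vol}B_g(x,r+\rho/2)\le \bigl(V_K(2r+\rho/2)/V_K(\rho/2)\bigr)\operatorname{vol}B_g(x_{i_k},\rho/2)$ for every $k$. Summing the disjoint volumes and dividing by the finite positive quantity $\operatorname{vol}B_g(x,r+\rho/2)$ produces the explicit estimate $N\le V_K(2r+\rho/2)/V_K(\rho/2)$, which depends only on $n,r,\rho$ and $K$, as claimed.

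The only genuinely nontrivial ingredient is the volume comparison step: the combinatorial packing construction is elementary, but converting it into a \emph{uniform}, dimension- and curvature-dependent bound on $N$ is exactly what Bishop--Gromov provides, and it is essential that the lower Ricci bound $K$ be available (none of the further geometric hypotheses of the ambient theorems — Cartan--Hadamard, K\"ahler, curvature comparisons for $L$ — are needed here). I would therefore expect the main obstacle to be purely bookkeeping: arranging the two ball inclusions so that a single application of the comparison inequality per center suffices, and confirming that the resulting constant is explicit in the stated parameters.
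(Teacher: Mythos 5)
The paper does not actually prove this lemma: it is quoted as Gromov's Packing Lemma with a citation to [11], so there is no internal proof to compare against. Your argument is the standard proof of that result and is correct. Maximal $\rho$-separation (via Zorn or a greedy choice, with countability from the disjoint $\rho/2$-balls and separability) gives (ii) and the covering half of (i), and Bishop--Gromov comparison, applied through the two inclusions $B_{g}(x_{i_{k}},\rho/2)\subset B_{g}(x,r+\rho/2)\subset B_{g}(x_{i_{k}},2r+\rho/2)$, converts disjointness into the uniform bound $N\leq V_{K}(2r+\rho/2)/V_{K}(\rho/2)$, which depends only on $n,r,\rho,K$ as required. One small point of bookkeeping: the paper's definition of a uniformly locally finite covering requires that some \emph{neighborhood} of each point meet at most $N$ of the balls, whereas you bound the number of balls containing the point itself. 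This is repaired at no cost by taking the neighborhood $B_{g}(x,r)$ and counting the centers with $d_{g}(x,x_{i_{k}})<2r$ (the condition for $B_{g}(x,r)$ to meet $B_{g}(x_{i_{k}},r)$); the identical volume argument then gives the bound $V_{K}(3r+\rho/2)/V_{K}(\rho/2)$, still depending only on the stated parameters.
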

 \begin{defn} We say that the volume of $(M,g)$ grows uniformly subexponentially if and only if for any $\epsilon > 0$ there exists a constant $C < \infty$ such that, for ll $r > 0$ and all $z\in M$
 $$
 vol_{g}(B(z,r))\leq Ce^{\epsilon r}vol_{g}(B(z,1))
 $$
\end{defn}
\begin{defn}
A  Hermitian manifold $(M,g)$ is said to have bounded geometry if there exists positive numbers $R$ and  $c$  such that for all $z\in M$ there exists a biholomorphic mapping $F_{z} : (U,0)\subset\mathbb{C}^{n}\longrightarrow (V,z)\subset M$ such that \\
 (i) $F_{z}(0)=z$,\\
 (ii) $B_{g}(z,R)\subset F_{z}(U)$ and \\
 (iii) ${1\over c} g_{e}\leq F_{z}^{*}g\leq c g_{e}$ on $F_{z}^{-1}(B_{g}(z,R))$ where $g_{e}$ is the euclidean metric.
 \end{defn}
 \noindent By $(iii)$
 $$
 \forall\ w \in B_{g}(z,R)\ :\ {1\over c}\Vert F_{z}^{-1}(w)\Vert_{e}\leq d_{g}(w,z)\leq c\Vert F_{z}^{-1}(w)\Vert_{e}
 $$
\begin{rem} If an Hermitian manifold $(M,g)$ has bounded geometry then the geodesic exponential map $\exp_{z} : T^{\mathbb{R}}_{z}M\rightarrow M$ is defined on a ball $B(0,r)\subset T^{\mathbb{R}}_{z}M$ for any $r < R$ and provide a diffeomorphism of this ball onto the ball $B_{g}(z,r)\subset M$. It follows that the manifold $(M,g)$ is complete.
\end{rem}
\begin{rem}
It is well known that if $(M,g)$ has  bounded geometry and $Ric(g)\geq Kg$ then $(M,g)$ satisfy the
uniform ball size condition ([7] Prop. 14), i.e. for every $r\in\mathbb{R}^{+}$
$$
\inf_{z\in M}vol(B_{g}(z,r)) > 0\quad\hbox{and}\quad\sup_{z\in M}vol(B_{g}(z,r)) < \infty
$$
Also by volume comparison theorem [3], there are nonnegative constants $ C,\alpha,\beta$ such that
$$
vol_{g}(B_{g}(z,r))\leq Cr^{\alpha}e^{\beta r},\quad \forall\ r\geq 1,\ z\in M
$$
\end{rem}
\noindent Bounded geometry allows one to produce an exhausion function which behaves like the distance function and whose gradient and hessian are bounded on $M$ [23].
 \begin{lem}
 Let $(M,g)$ be a Hermitian manifold with bounded geometry. For every $z\in M$ there exists a smooth function $\Psi_{z} : M\longrightarrow\mathbb{R}$  such that \\
 (i) $C_{1}d_{g}(.,z)\leq \Psi_{z}\leq C_{2}(d_{g}(.,z)+1)$,\\
 (ii) $\vert\partial\Psi_{z}\vert_{g}\leq C_{3}$, and\\
 (iii) $-C_{4}\omega_{g}\leq i\partial\bar\partial\Psi_{z}\leq C_{5}\omega_{g}$.\\
 Furthermore, the constants in $(i), (ii)$ and $(iii)$ depend only on the constants associated with the bounded geometry of $(M,g)$.
 \end{lem}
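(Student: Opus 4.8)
The plan is to smooth the distance function $d_{g}(\cdot,z)$ chart-by-chart and glue the pieces with a partition of unity adapted to the bounded geometry: the point is that $d_{g}(\cdot,z)$ is globally $1$-Lipschitz but singular, while its local mollifications are smooth with derivatives controlled purely in terms of the bounded-geometry data. First I fix, once and for all (independently of $z$), a uniformly locally finite covering $(B_{g}(x_{i},r))$ with $r<R$ and overlap bounded by some $N$, as produced by Lemma 2.3. Using the biholomorphic charts $F_{x_{i}}:(U,0)\subset\mathbb{C}^{n}\to(V,x_{i})\subset M$ of Definition 2.5, I build a smooth partition of unity $(\chi_{i})$ subordinate to this covering with $\sum_{i}\chi_{i}=1$ and
$$
|\partial\chi_{i}|_{g}\le C,\qquad -C\omega_{g}\le i\partial\bar\partial\chi_{i}\le C\omega_{g}
$$
uniformly in $i$. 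These bounds hold because each bump function is modelled on a fixed Euclidean profile in its chart, and the quasi-isometry ${1\over c}g_{e}\le F_{x_{i}}^{*}g\le c\,g_{e}$ transfers the (fixed) Euclidean derivative bounds to $g$-bounds. Crucially, every such comparison is a pointwise algebraic comparison of $(1,1)$-forms, so only the $C^{0}$-control of the metric provided by bounded geometry is needed; no control on derivatives of the metric coefficients enters.

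Next, for a given $z$, I mollify $d_{g}(\cdot,z)$ in each chart. Setting $u_{i}:=d_{g}(\cdot,z)\circ F_{x_{i}}$ on $U$, the fact that $d_{g}(\cdot,z)$ is $1$-Lipschitz for $g$ and $F_{x_{i}}$ is a $c$-quasi-isometry makes $u_{i}$ Lipschitz for $g_{e}$ with constant $\le c$. Convolving with a fixed mollifier $\rho_{\varepsilon}$ at a fixed scale $\varepsilon$ (independent of $i$ and $z$) yields $\tilde u_{i}:=u_{i}*\rho_{\varepsilon}$ with
$$
|\tilde u_{i}-u_{i}|\le c\varepsilon,\qquad |\nabla\tilde u_{i}|\le c,\qquad |\nabla^{2}\tilde u_{i}|\le c\,\|\nabla\rho_{\varepsilon}\|_{L^{1}}.
$$
Transporting back, $\psi_{i}:=\tilde u_{i}\circ F_{x_{i}}^{-1}$ is smooth on $B_{g}(x_{i},R)$ and, after the same $C^{0}$-comparison of metrics, satisfies $|\psi_{i}-d_{g}(\cdot,z)|\le c\varepsilon$, $|\partial\psi_{i}|_{g}\le C'$ and $-C'\omega_{g}\le i\partial\bar\partial\psi_{i}\le C'\omega_{g}$, with $C'$ depending only on the bounded-geometry constants.

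Finally I set $\Psi_{z}:=\sum_{i}\chi_{i}\psi_{i}$, a locally finite sum of smooth functions, and verify (i)--(iii). For (i), $\Psi_{z}-d_{g}(\cdot,z)=\sum_{i}\chi_{i}(\psi_{i}-d_{g}(\cdot,z))$ together with $\sum_{i}\chi_{i}=1$ gives $|\Psi_{z}-d_{g}(\cdot,z)|\le c\varepsilon$; after adding a fixed constant (depending only on $c,\varepsilon$) to force $\Psi_{z}\ge 1$, both the upper bound $\Psi_{z}\le C_{2}(d_{g}(\cdot,z)+1)$ and the lower bound $C_{1}d_{g}(\cdot,z)\le\Psi_{z}$ follow, the latter being immediate for large $d_{g}$ and trivial near $z$. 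For (ii), from $\partial\Psi_{z}=\sum_{i}\psi_{i}\partial\chi_{i}+\sum_{i}\chi_{i}\partial\psi_{i}$ the second sum is bounded by $C'$, while in the first I exploit $\sum_{i}\partial\chi_{i}=0$ to replace $\psi_{i}$ by the bounded quantity $\psi_{i}-d_{g}(\cdot,z)$, leaving at most $N$ terms of size $\le c\varepsilon\,C$. The main obstacle is (iii): the expansion
$$
i\partial\bar\partial(\chi_{i}\psi_{i})=\psi_{i}\,i\partial\bar\partial\chi_{i}+i\partial\chi_{i}\wedge\bar\partial\psi_{i}+i\partial\psi_{i}\wedge\bar\partial\chi_{i}+\chi_{i}\,i\partial\bar\partial\psi_{i}
$$
produces the dangerous term $\sum_{i}\psi_{i}\,i\partial\bar\partial\chi_{i}$ in which $\psi_{i}$ grows like $d_{g}$. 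The decisive cancellation is $\sum_{i}i\partial\bar\partial\chi_{i}=i\partial\bar\partial(\sum_{i}\chi_{i})=0$, valid pointwise as $(1,1)$-forms; multiplying this identity at each point $x$ by the scalar $d_{g}(x,z)$ (so that the singular $d_{g}$ is never differentiated) lets me rewrite $\sum_{i}\psi_{i}\,i\partial\bar\partial\chi_{i}=\sum_{i}(\psi_{i}-d_{g}(\cdot,z))\,i\partial\bar\partial\chi_{i}$, a sum of at most $N$ terms each $\le c\varepsilon\,C\omega_{g}$. The three remaining terms are bounded directly by $C''\omega_{g}$ using the uniform bounds on $\chi_{i}$, $\psi_{i}$ and the bounded overlap. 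Since every constant depends only on $c,R,r,N,\varepsilon$ and the fixed profiles, the constants in (i)--(iii) depend only on the bounded-geometry data, as claimed.
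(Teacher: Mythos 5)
Your construction is sound, and it is essentially the standard one; note, though, that the paper itself offers no proof of this lemma --- it is stated with a citation to Napier [23] --- so there is no internal argument to compare against. Your write-up supplies exactly the two ideas that make the regularization work under the weak (purely $C^{0}$) notion of bounded geometry in Definition 2.5: first, that $i\partial\bar\partial$ commutes with the holomorphic charts $F_{x_{i}}$, so the bounds in (ii) and (iii) reduce to pointwise linear-algebra comparisons of fixed Euclidean quantities against $F_{x_{i}}^{*}\omega_{g}\geq \frac{1}{c}\omega_{e}$, with no control on derivatives of the metric coefficients needed (a bound on the full Riemannian Hessian would \emph{not} follow from Definition 2.5, which is why the lemma is phrased in terms of $i\partial\bar\partial\Psi_{z}$); and second, the cancellations $\sum_{i}\partial\chi_{i}=0$ and $\sum_{i}i\partial\bar\partial\chi_{i}=0$, which let you replace the unbounded factor $\psi_{i}$ by $\psi_{i}-d_{g}(\cdot,z)=O(\varepsilon)$ in the dangerous sums. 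Two small points deserve attention. First, you invoke Lemma 2.3 for the uniformly locally finite cover, but that lemma assumes a lower Ricci bound, which is not a hypothesis of the present statement; this is harmless, since a maximal $r/2$-separated set together with the uniform two-sided volume bounds on small balls coming directly from condition (iii) of Definition 2.5 yields the same bounded-overlap covering. Second, to get $\sum_{i}\chi_{i}=1$ you must normalize, $\chi_{i}=\eta_{i}/\sum_{j}\eta_{j}$, and you should say why the uniform bounds on $\partial\chi_{i}$ and $i\partial\bar\partial\chi_{i}$ survive this: the denominator is bounded below by $1$ and only $N$ terms contribute at each point. With these remarks your argument is complete, and the constants visibly depend only on $c$, $R$, $r$, $N$, $\varepsilon$ and the fixed mollifier and bump profiles, i.e. only on the bounded-geometry data, as required.
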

 \subsection{Bergman Kernel of Line Bundles}
Let $L$ be a holomorphic  hermitian  line bundle over a complex manifold $M$, and let $(U_{j})$ be a covering of the manifold by open sets over which $L$ is locally trivial. A section $s$ of $L$ is
then represented by a collection of complex valued functions $s_{j}$ on $U_{j}$ that are
related by the transition functions $(g_{jk})$ of the bundle
$$
s_{j}=g_{jk}s_{k}\quad\hbox{on $U_{j}\cap U_{k}$}
$$
 We say that $s$ is holomorphic if each $s_{i}$ is holomorphic on $U_{j}$ and we write $\bar\partial s=0$. The conjugate bundle of $L$  is the  hermitian anti-holomorphic line bundle $\overline{L}$ whose transition functions are $(\overline{g}_{jk})$. A metric  $h$ on $L$ is
given by a collection of real valued functions  $\Phi_{j}$ on $U_{j}$ , related so that
$$
\vert f_{j}\vert^{2}e^{-\Phi_{j}} =:\vert s\vert^{2}_{h}
$$
is globally well defined. We will write  $h$ for the collection $(\Phi_{j})$ , and refer to $h$ the metric  on $L$. We say that L is positive,
$L>0$, if $h$ can be chosen smooth with curvature $$c(L):=i\partial\bar\partial\Phi_{j}$$ strictly positive, and that
$L$ is semipositive, $L\geq 0$, if it has a smooth metric of semipositive curvature. We say that $h$ is a singular metric if each $\Phi_{j}$ is only plurisubharmonic.
\begin{defn} A holomophic Hermitian line bundle $(L,h)\longrightarrow (M,g)$ has bounded curvature if
$$
-M\omega_{g}\leq c(L)\leq M\omega_{g}
$$
for some positive constant $M$.
\end{defn}
\noindent Fix $p\in [1,+\infty]$.  Let the Lebesgue spaces
\begin{eqnarray*}
L^{p}(M, L) &:=& \{ s : M\longrightarrow L\ :\  \Vert s\Vert_{p}:=\Bigl(\int_{M}\vert u\vert^{p}_{h}dv_{g}\Bigr)^{1\over p} < \infty \}\\
L^{\infty}(M,L)&:=& \{ s : M\longrightarrow   L\ :\  \Vert s\Vert_{\infty}:=\sup_{z\in M}\vert s(z)\vert_{h}  < \infty\},
\end{eqnarray*}
and the Bergman spaces of holomorphic sections
\begin{eqnarray*}
\mathcal{F}^{p}(M, L)&:=&\{s\in L^{p}(M,L)\ :\ \bar\partial s=0\}\\   \mathcal{F}^{\infty}(M,L)&:=&\{s\in L^{\infty}(M,L)\ :\ \bar\partial s=0\}
\end{eqnarray*}
Let us note an important property of the space $\mathcal{F}^{2}(X,L)$ which follows from the Cauchy estimates for holomorphic functions. Namely, for every compact set $G\subset M$ there exists $C_{G} > 0$ such that
 \begin{equation}
 \sup_{z\in G}\vert s(x)\vert\leq C_{G}\Vert s\Vert_{2}\ \hbox{for all $s\in \mathcal{F}^{2}(X,L)$}
 \end{equation}
 We deduce that $\mathcal{F}^{2}(M,L)$ is a closed subspace of $L^{2}(M,L)$ one can show also that $\mathcal{F}^{2}(M,L)$ is separable ( cf. [31, p.30]).
 \begin{defn}
 \noindent The Bergman projection is the orthogonal projection
 $$
 P\ :\ L^{2}(M,L)\longrightarrow\mathcal{F}^{2}(M,L)
 $$
 \end{defn}
\noindent  In view of (2.1) , the Riesz representation theorem shows that for a fixed $z\in M$ there exists a section $K(z,.)\in L^{2}(M,  L_{z}\otimes\overline{L})$ such that
 \begin{equation}
 s(z)=\int_{M}K(z,w)\bullet s(w)dv_{g}\ \hbox{for all}\ s\in \mathcal{F}^{2}(M, L)
 \end{equation}
 \noindent The distribution kernel $K$ is called the Bergman Kernel of $( L,h)\longrightarrow (M,g)$. If $\mathcal{F}^{2}(M, L)=0$ we have of course $K(z,z)=0$ for all $z\in M$. If $\mathcal{F}^{2}(M,L)\not=0$, consider an orhonormal basis $(s_{j})_{j=1}^{d}$ of $\mathcal{F}^{2}(X,L)$ ( where $1\leq d\leq\infty$). By estimates (1.1)
\begin{eqnarray*}
K(z,w)=\sum_{j=1}^{d}s_{j}(z)\otimes\overline{s_{j}(w)}\in L_{z}\otimes \overline{L_{w}}
\end{eqnarray*}
where the right hand side converges on every compact together with all its derivates ( ses [31, p.62]). Thus $K(z,w)\in C^{\infty}(M\times M,L\otimes\overline{L})$. It follows that
$$
(Ps)(z)=\int_{M} K(z,w)\bullet s(w) dv_{g}(w),\ \hbox{for all}\ s\in L^{2}(M,L),
$$
that is $K(.,.)$ is the intergral kernel of the Bergman projection $P$. Since
\begin{eqnarray*}
 \vert K(z,w)\vert^{2}&=&\sum_{j=1}^{d}\sum_{k=1}^{d}<s_{j}(z)\otimes\overline{s_{j}(w)},s_{k}(z)\otimes\overline{s_{k}(w)}>_{L_{z}\otimes \overline{L_{w}}}\\
&=& \sum_{j}\sum_{k}<s_{j}(z),s_{k}(z)>_{L_{z}}\overline{<s_{j}(w),s_{k}(w)>_{L_{w}}}
\end{eqnarray*}
then $K(z,w)$ is symetric
$$
\vert K(z,w)\vert=\vert K(w,z)\vert
$$
The function $\vert K(z,z)\vert$ is called the Bergman function of $\mathcal{F}^{2}(M,L)$. It satisfies
$$
 \vert K(z,z)\vert=\int_{M}\vert K(z,w)\vert^{2}dv_{g}(w)
$$
\subsection{$\bar\partial$-Methods}
\noindent We  recall Demailly's theorem [9], which generalizes H\"ormander's $L^{2}$ estimates [13]
(Theorem 2.2.1, p. 104) for forms with values in a line bundle.
\begin{thm} Let $(X,\omega)$ be a complete K\"ahler manifold, $(L,h)$ a holomorphic hermitian line bundle over $X$, and let $\phi$ be a locally
integrable function over $X$. If the curvature $c(L)$ is such that
$$
c(L)+Ric(\omega)+i\partial\bar\partial\phi \geq\gamma\omega
$$
for some positive and continuous function $\gamma$ on $X$, then for all $v\in L^{2}_{(0,1)}(X,L,loc)$, $\bar\partial$-closed and such that
$$
\int_{X}\gamma^{-1}\vert v\vert^{2}dv_{\omega} < \infty
$$
there exists  $u\in L^{2}(X,L)$ such that
$$
\bar\partial u=v\quad\hbox{and}\quad\int_{X}\vert u\vert_{h}^{2}dv_{\omega}\leq\int_{X}\gamma^{-1}\vert v\vert_{\omega,h}^{2}dv_{\omega}
$$
\end{thm}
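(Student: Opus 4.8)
The plan is to prove this the classical way, combining the Bochner--Kodaira--Nakano curvature identity with H\"ormander's functional-analytic duality. First I would reduce to top-degree forms. A $(0,1)$-form $v$ with values in $L$ is canonically identified with an $(n,1)$-form with values in the twisted bundle $E:=L\otimes K_X^{-1}$, where $K_X^{-1}=\Lambda^{n}T^{1,0}X$ carries the metric induced by $\omega$; this identification is an isometry of the relevant $L^{2}$ spaces and intertwines the $\bar\partial$ operators. Since the Chern curvature of $K_X^{-1}$ is $Ric(\omega)$, and since absorbing the weight $e^{-\phi}$ into the fibre metric of $L$ adds $i\partial\bar\partial\phi$ to the curvature, the hypothesis
$$
c(L)+Ric(\omega)+i\partial\bar\partial\phi\ge\gamma\omega
$$
says exactly that the Nakano curvature $c(E)$ of the weighted bundle $E$ is bounded below by $\gamma\omega$.

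Next, on $E$ the Bochner--Kodaira--Nakano identity for the K\"ahler metric $\omega$ (whose torsion terms vanish) gives, for every smooth compactly supported $E$-valued $(n,1)$-form $u$,
$$
\|\bar\partial u\|^{2}+\|\bar\partial^{*}u\|^{2}\ge\int_{X}\langle[c(E),\Lambda]u,u\rangle\,dv_\omega\ge\int_{X}\gamma\,|u|^{2}\,dv_\omega ,
$$
where $\Lambda$ is the adjoint of multiplication by $\omega$; on $(n,1)$-forms the curvature operator $[c(E),\Lambda]$ is bounded below by the smallest eigenvalue of $c(E)$ relative to $\omega$, hence by $\gamma$. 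To pass from compactly supported forms to the full domain $\mathrm{Dom}(\bar\partial)\cap\mathrm{Dom}(\bar\partial^{*})$ I would use completeness of $(X,\omega)$: choose cut-off functions $\chi_k\to1$ with $|d\chi_k|_\omega\to0$, which exist precisely because $\omega$ is complete, and verify that multiplication by $\chi_k$ approximates any element of the domain in the graph norm while the commutator contributions involving $d\chi_k$ vanish in the limit.

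Finally I would run H\"ormander's duality argument. Write $T=\bar\partial$ on $E$-valued $(n,0)$-forms, so that $\mathrm{Im}\,T\subset\ker\bar\partial$ and $v\in\ker\bar\partial$. For $g\in\mathrm{Dom}(\bar\partial^{*})$ decompose orthogonally $g=g'+g''$ with $g'\in\ker\bar\partial$ and $g''\in(\ker\bar\partial)^{\perp}\subset\ker\bar\partial^{*}$; then $\langle g,v\rangle=\langle g',v\rangle$ and $\bar\partial^{*}g'=\bar\partial^{*}g$. Weighted Cauchy--Schwarz together with the a priori estimate applied to the $\bar\partial$-closed form $g'$ yields
$$
|\langle g,v\rangle|^{2}\le\Bigl(\int_{X}\gamma^{-1}|v|^{2}\,dv_\omega\Bigr)\int_{X}\gamma\,|g'|^{2}\,dv_\omega\le\Bigl(\int_{X}\gamma^{-1}|v|^{2}\,dv_\omega\Bigr)\|\bar\partial^{*}g\|^{2}.
$$
Hence $\bar\partial^{*}g\mapsto\langle g,v\rangle$ is a bounded functional on the range of $\bar\partial^{*}$ of norm at most $(\int_{X}\gamma^{-1}|v|^{2}\,dv_\omega)^{1/2}$, and Riesz representation produces $u\in L^{2}(X,E)$ with $\|u\|^{2}\le\int_{X}\gamma^{-1}|v|^{2}\,dv_\omega$ and $\bar\partial u=v$ weakly; transporting back through the isometry gives the asserted $u\in L^{2}(X,L)$ with $\bar\partial u=v$ and $\int_{X}|u|_h^{2}\,dv_\omega\le\int_{X}\gamma^{-1}|v|^{2}_{\omega,h}\,dv_\omega$. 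The main obstacle is the density/closability step: the curvature inequality is elementary for compactly supported forms, but justifying it on the full Hilbert-space domains — equivalently, that smooth compactly supported forms simultaneously form a core for $\bar\partial$ and $\bar\partial^{*}$ — is exactly where the completeness of the K\"ahler metric is indispensable, and is the delicate technical heart of the argument.
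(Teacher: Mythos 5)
This statement is quoted in the paper as Theorem 2.10 and attributed to Demailly [9]; the paper gives no proof of its own, so there is nothing internal to compare against. Your outline — twisting by $K_X^{-1}$ and the weight $e^{-\phi}$ to reduce to $(n,1)$-forms with Nakano curvature bounded below by $\gamma\omega$, the Bochner--Kodaira--Nakano inequality, the Andreotti--Vesentini density of compactly supported forms in the graph norm via cut-offs with $|d\chi_k|_\omega\to 0$, and the H\"ormander duality/Riesz step — is exactly the standard argument of the cited reference and is correct, including your identification of the density step as the place where completeness of $\omega$ is used.
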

\noindent Also, we  recall J.McNeal-D.Varolin's theorem [19](Theorem 2.2.1, p. 104), which generalizes Berndtsson-Delin's improved $L^{2}$-estimate  of $\bar\partial$-equation having minimal $L^{2}$-norm [2],[8] for forms with values in a line bundle.
\begin{thm}
Let $(M,g)$ be a Stein K\"ahler manifold, and $(L,h)\longrightarrow(M,g)$ a holomorphic hermitian line
bundle with Hermitian metric $h$. Suppose there exists a smooth function $\eta : M \rightarrow \mathbb{R}$
and a positive, a.e. strictly positive Hermitian $(1,1)$-form $\Theta$ on $M$ such that
$$
c(L)+Ric(g)+i\partial\bar\partial\eta-i\partial\eta\wedge\bar\partial\eta\geq\Theta
$$
Let $v$ be an $L$-valued $(0,1)$-form such that $v=\bar\partial u$ for some $L$-valued section $u$ satisfying
$$
\int_{M}\vert u\vert_{h}^{2}dv_{g} < \infty
$$
Then the solution $u_{0}$ of $\bar\partial u=v$ having minimal $L^{2}$-norm i.e
$$
\int_{M}<u_{0},\sigma>dv_{g}=0\ \hbox{for all}\ \sigma\in \mathcal{F}^{2}(M,L)
$$
satisfies the estimate
$$
\int_{M}\vert u_{0}\vert_{h}^{2}e^{\eta}dv_{g}\leq\int_{M}\vert v\vert^{2}_{\Theta,h}e^{\eta}dv_{g}
.$$
\end{thm}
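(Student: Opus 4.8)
The statement is the sharp twisted $L^2$-estimate for $\bar\partial$ with a ``gain'', and I would prove it by the weighted Bochner--Kodaira--Nakano method in the form developed by H\"ormander and Demailly and refined by Berndtsson and Ohsawa--Takegoshi. Write $h=e^{-\phi}$ locally, let $\Lambda$ be the adjoint of $\beta\mapsto\omega_g\wedge\beta$, and recall that $|v|^2_{\Theta,h}=\langle[\Theta,\Lambda]^{-1}v,v\rangle_h$ for a $(0,1)$-form $v$. The first step is a reduction: everything should be reduced to an a priori estimate on smooth, compactly supported $L$-valued $(0,1)$-forms. A Stein K\"ahler manifold carries a complete K\"ahler metric and bounded geometry forces completeness (Remark 2.6), so the Andreotti--Vesentini/Gaffney density lemma shows that such forms are dense in $\mathrm{Dom}(\bar\partial)\cap\mathrm{Dom}(\bar\partial^{*})$ for the graph norm, which legitimizes every integration by parts below. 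Since $\Theta$ is only almost everywhere positive, I would first prove the estimate under $\Theta\ge\varepsilon\omega_g$ and then let $\varepsilon\downarrow 0$ by monotone convergence; where $\Theta$ degenerates the right-hand side is infinite unless $v$ vanishes, so nothing is lost.

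Next I would pass to the minimal solution by duality. The hypothesis $v=\bar\partial u$ with $u\in L^2$ places $v$ in the closure of $\mathrm{Im}\,\bar\partial$, so $u_0$ exists and is characterized by $u_0\perp\mathcal F^2(M,L)$, i.e. $u_0\in\overline{\mathrm{Im}\,\bar\partial^{*}}$. Pairing only against $g=\bar\partial^{*}\alpha$, for which $\langle u_0,\bar\partial^{*}\alpha\rangle=\langle\bar\partial u_0,\alpha\rangle=\langle v,\alpha\rangle$, the dual-norm identity gives
$$\int_M|u_0|_h^2\,e^{\eta}\,dv_g=\sup_{\alpha}\frac{\bigl|\int_M\langle v,\alpha\rangle_h\,dv_g\bigr|^2}{\int_M|\bar\partial^{*}\alpha|_h^2\,e^{-\eta}\,dv_g},$$
the supremum over smooth compactly supported $\alpha$, which by completeness may be restricted to $\bar\partial$-closed $\alpha$. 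Estimating the numerator by Cauchy--Schwarz against $\Theta$ bounds the right-hand side by $\bigl(\int_M|v|^2_{\Theta,h}e^{\eta}\,dv_g\bigr)$ times $\sup_\alpha\bigl(\int\langle[\Theta,\Lambda]\alpha,\alpha\rangle_h e^{-\eta}\bigr)\big/\bigl(\int|\bar\partial^{*}\alpha|_h^2 e^{-\eta}\bigr)$, so the whole theorem reduces to showing that this last supremum is at most $1$, that is
$$\int_M\langle[\Theta,\Lambda]\alpha,\alpha\rangle_h\,e^{-\eta}\,dv_g\ \le\ \int_M|\bar\partial^{*}\alpha|_h^2\,e^{-\eta}\,dv_g$$
for every $\bar\partial$-closed compactly supported $\alpha$.

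To prove this inequality I would apply the twisted Bochner--Kodaira--Nakano identity with twist $\tau=e^{-\eta}$, identifying $(0,1)$-forms with values in $L$ with $(n,1)$-forms with values in $L\otimes K_M^{-1}$, whose Chern curvature is the total curvature $c(L)+Ric(g)$; this is how the Ricci term enters. The twist contributes $-i\partial\bar\partial\tau$ to the effective curvature, and the key algebraic fact is
$$-\,i\partial\bar\partial e^{-\eta}=e^{-\eta}\bigl(i\partial\bar\partial\eta-i\partial\eta\wedge\bar\partial\eta\bigr),$$
so that, formally, the effective curvature becomes $e^{-\eta}\bigl(c(L)+Ric(g)+i\partial\bar\partial\eta-i\partial\eta\wedge\bar\partial\eta\bigr)\ge e^{-\eta}\Theta=\tau\Theta$ by hypothesis, exactly matching the weight $\tau=e^{-\eta}$; since $\alpha$ is $\bar\partial$-closed the $|\bar\partial\alpha|^2$ term drops and the displayed inequality follows with sharp constant $1$.

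The main obstacle is precisely the first-order term produced by the twisting. The twisted identity also generates a cross term coupling $\bar\partial^{*}\alpha$ with the contraction of $\alpha$ by $\partial\eta$, and because the curvature hypothesis is only an inequality $\ge\Theta$ with no slack, I cannot discard it by a lossy Cauchy--Schwarz against an auxiliary function without degrading the constant. The delicate point, and the essential refinement over the plain H\"ormander--Demailly estimate (Theorem 2.11), is to absorb this cross term exactly by the Berndtsson/Donnelly--Fefferman completion of squares against the same twist $e^{-\eta}$, so that the gradient contribution $-e^{-\eta}i\partial\eta\wedge\bar\partial\eta$ enters with the correct sign and coefficient and the constant remains exactly $1$. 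Verifying this cancellation rigorously, together with the graph-norm density of Step 1, is where the real work lies; once it is in place, the reduction and the formal assembly above complete the proof.
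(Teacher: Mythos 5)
First, a caveat: the paper contains no proof of this statement. It is Theorem 2.12, quoted verbatim from McNeal--Varolin [19] as a packaging of the Berndtsson--Delin improved estimate [2], [8], so there is nothing internal to compare your argument against; I am judging it against the known proofs of that cited result.

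Your proposal has a genuine gap, and it sits exactly at the step you present as routine. The ``dual-norm identity'' $\int_M|u_0|_h^2e^{\eta}dv_g=\sup_{\alpha}|\int_M\langle v,\alpha\rangle_h dv_g|^2/\int_M|\bar\partial^{*}\alpha|_h^2e^{-\eta}dv_g$ is false as stated. The unrestricted supremum over all $g\in L^2(e^{-\eta})$ of $|\int\langle u_0,g\rangle|^2/\int|g|^2e^{-\eta}$ does equal $\int|u_0|^2e^{\eta}$ (extremizer $g=u_0e^{\eta}$), but restricting $g$ to $\mathrm{Im}\,\bar\partial^{*}$ can only decrease the supremum, so you obtain $\sup_\alpha(\cdots)\le\int|u_0|^2e^{\eta}$ --- the useless direction. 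Equality would require $u_0e^{\eta}$ to lie in the $L^2(e^{-\eta})$-closure of $\mathrm{Im}\,\bar\partial^{*}$, equivalently $\int\langle u_0,\sigma\rangle e^{\eta}dv_g=0$ for all holomorphic $\sigma$; but the minimal solution is characterized by $\int\langle u_0,\sigma\rangle dv_g=0$ \emph{without} the weight. This is not a technicality to be patched: the entire content of the theorem is that the solution minimal for the \emph{unweighted} norm satisfies the $e^{\eta}$-weighted bound, whereas your duality setup would at best estimate the solution minimal for the $e^{\eta}$-weighted norm. Compounding this, the a priori inequality you reduce to, $\int\langle[\Theta,\Lambda]\alpha,\alpha\rangle e^{-\eta}\le\int|\bar\partial^{*}\alpha|^2e^{-\eta}$ with constant exactly $1$, does not follow from the curvature hypothesis: rewriting it as the Bochner--Kodaira estimate for the shifted metric $he^{-\eta}$ (total curvature $c(L)+Ric(g)+i\partial\bar\partial\eta$), the adjoint changes by the zeroth-order term $(\partial\eta)^{\#}\lrcorner\alpha$ and produces the cross term $2\,\mathrm{Re}\int\langle\bar\partial^{*}\alpha,(\partial\eta)^{\#}\lrcorner\alpha\rangle e^{-\eta}$, and every Cauchy--Schwarz absorption of it costs a factor $(1+t)$ on $\|\bar\partial^{*}\alpha\|^2$ and $(1+t^{-1})$ on $i\partial\eta\wedge\bar\partial\eta$, so the single copy of $-i\partial\eta\wedge\bar\partial\eta$ available in the hypothesis is never enough. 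You correctly identify this cross term as ``where the real work lies,'' but the ``exact completion of squares'' you invoke does not exist in this form; that is precisely why the sharp constant is not obtained from a twisted a priori inequality in the literature.

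The actual mechanism of Berndtsson and Delin works on the minimal solution directly: one writes $\int|u_0|^2e^{\eta}dv_g=\int\langle u_0,\,u_0e^{\eta}-P(u_0e^{\eta})\rangle dv_g$ (legitimate because $u_0\perp\mathcal{F}^2(M,L)$ in the unweighted inner product), observes that $w:=u_0e^{\eta}-P(u_0e^{\eta})$ solves $\bar\partial w=e^{\eta}(v+\bar\partial\eta\wedge u_0)$, applies Cauchy--Schwarz with the split weights $e^{\eta/2},e^{-\eta/2}$ to get $\|u_0\|_{e^{\eta}}\le\|w\|_{e^{-\eta}}$, and then estimates $\|w\|_{e^{-\eta}}$ by the H\"ormander--Demailly inequality in the metric $he^{-\eta}$, where the hypothesis $c(L)+Ric(g)+i\partial\bar\partial\eta\ge\Theta+i\partial\eta\wedge\bar\partial\eta$ supplies exactly the positivity needed to control the two pieces $e^{\eta}v$ and $e^{\eta}\bar\partial\eta\wedge u_0$ separately and reabsorb the latter into $\|u_0\|_{e^{\eta}}$. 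If you want to write a complete proof, follow that route (Berndtsson [2], Delin [8], or Section 2 of McNeal--Varolin [19]) rather than the duality reduction.
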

\section{Estimates for the Bergman Kernel}
 \subsection{Weighted Bergman Inequalities}
 \begin{prop} Let $(M,g)$ be a  complete noncompact K\"ahler  manifold with bounded geometry and lower Ricci curvature bound. Let $(L,h)\longrightarrow (M,g)$ be a hermitian holomorphic line bundle with bounded curvature. Fix $p\in ]0,\infty[$.
Then for each $r > 0$ there exists a constant $C_{r}$ such that if $s\in \mathcal{F}^{2}(M,L)$ then
\begin{equation}
\vert s(z)\vert^{p}\leq C_{r}^{p}\int_{B_{g}(z,r)}\vert s\vert^{p}dv_{g}
\end{equation}
in particular $\mathcal{F}^{p}(M,L)\subset\mathcal{F}^{\infty}(M,L)$
and
\begin{equation}
\vert\nabla\vert s(z)\vert^{p}\vert_{g}(z)\leq C_{r}^{p}\int_{B_{g}(z,r)}\vert s\vert^{p}dv_{g}
\end{equation}
\end{prop}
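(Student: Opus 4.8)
The plan is to prove both inequalities purely locally, by transplanting the problem to a Euclidean ball through the bounded–geometry charts, trivializing $L$ there, and then exploiting the fact that $\log|s|_h^2$ is plurisubharmonic up to a curvature–controlled error. Fix $z\in M$ and let $F=F_z:(U,0)\to(V,z)$ be a chart as in Definition 2.5, so $F(0)=z$, $B_g(z,R)\subset F(U)$ and $\tfrac1c g_e\le F^*g\le c\,g_e$. Since $U$ is a ball, $F^*L$ is holomorphically trivial; fixing a holomorphic frame, $s$ is represented by a holomorphic function $f$ and $h$ by a weight $e^{-\varphi}$, so that $|s|_h^2\circ F=|f|^2e^{-\varphi}$. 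The input I will use from the hypotheses is that the bounded curvature of $L$ together with $F^*g\sim g_e$ gives a uniform Euclidean bound $|i\partial\bar\partial\varphi|=|F^*c(L)|\le C_0\,\omega_e$ on $U$, with $C_0$ independent of $z$.

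First I would establish the value estimate (3.1). As currents, $i\partial\bar\partial\log|s|_h^2\circ F = i\partial\bar\partial\log|f|^2 - F^*c(L)\ge -C_0\omega_e$, and as a function $\log|s|_h^2\circ F$ is the sum of the plurisubharmonic $\log|f|^2$ and the smooth function $-\varphi$, whose complex Hessian is $\ge -C_0\omega_e$. Hence, choosing $\lambda>0$ so large that $i\partial\bar\partial\bigl(2\lambda|w|^2\bigr)\ge C_0\omega_e$, the function
\[
w_p:=\bigl(|s|_h^p\circ F\bigr)\,e^{\lambda p|w|^2}=\exp\Bigl(\tfrac p2\bigl(\log|s|_h^2\circ F+2\lambda|w|^2\bigr)\Bigr)
\]
is the exponential of a plurisubharmonic function, hence plurisubharmonic and a fortiori subharmonic on a fixed Euclidean ball $B(0,\rho)\subset U$. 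The sub–mean value inequality for $w_p$ at $0$, together with $w_p(0)=|s(z)|_h^p$ and $1\le e^{\lambda p|w|^2}\le e^{\lambda p\rho^2}$ on $B(0,\rho)$, yields $|s(z)|_h^p\le C\,|B(0,\rho)|^{-1}\int_{B(0,\rho)}|s|_h^p\circ F\,dV_e$. Converting back through $dV_e\le c^n\,F^*dv_g$ and $F(B(0,\rho))\subset B_g(z,c\rho)$ gives (3.1) for $r_0:=c\rho$; for $r\ge r_0$ it follows by monotonicity of the right–hand side, and for $r<r_0$ by taking $\rho$ smaller. Uniformity of $C_r$ in $z$ comes from the uniform constants $c,R,C_0$ of the bounded geometry and from the uniform ball–size condition of Remark 2.7. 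Taking the supremum over $z$ and using $\int_{B_g(z,r)}|s|^p\le\|s\|_p^p$ gives $\mathcal{F}^p(M,L)\subset\mathcal{F}^\infty(M,L)$.

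For the gradient estimate (3.2), plurisubharmonicity alone is insufficient, since subharmonic functions carry no pointwise gradient bound, so I would reintroduce the holomorphic structure. Changing the holomorphic frame by $e^{q/2}$ for a holomorphic quadratic $q$ replaces $\varphi$ by $\varphi'=\varphi-\mathrm{Re}\,q$; choosing $q$ to match the value, first derivatives, and pure second derivatives of $\varphi$ at $0$ leaves $\varphi'$ with vanishing $1$–jet at $0$ and full Hessian there reduced to its bounded Levi part. Writing $g:=fe^{-q/2}$, which is holomorphic, one has $|s|_h^p\circ F=|g|^p e^{-p\varphi'/2}$, and differentiating, on $B(0,\rho/2)$,
\[
\bigl|\nabla_e(|s|_h^p\circ F)\bigr|\le p\,|g|^{p-1}|\nabla_e g|\,e^{-p\varphi'/2}+\tfrac p2\,|g|^p e^{-p\varphi'/2}|\nabla_e\varphi'| .
\]
Here $|g|^p$ is controlled by the sub–mean value inequality and $|\nabla_e g|$ by the Cauchy estimates for the holomorphic function $g$ (which convert a sup of $|g|$ on a slightly larger ball, hence an integral of $|g|^p$, into a bound on $\nabla_e g$); finally one converts $|\nabla_e(\cdot)|$ into $|\nabla_g(\cdot)|_g$ using $F^*g\sim g_e$.

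I expect this gradient estimate to be the main obstacle, for two reasons. First, the reduction needs $C^1$–control of the error $\varphi'$ on the whole ball, not merely at $0$; since bounded curvature only bounds the Levi form of $\varphi$, controlling $\nabla_e\varphi'$ and the cubic remainder $\varphi'-\tfrac12\mathrm{Hess}\,\varphi'(0)[\cdot,\cdot]$ must draw on the higher–order estimates supplied by the bounded geometry, and this is precisely where the quantitative content of Definition 2.5 and Remark 2.7 enters. Second, the factor $|g|^{p-1}$ is harmless for $p\ge1$, where $(\sup|g|)^{p-1}(\sup|\nabla_e g|)\le C\int_{B(0,\rho)}|g|^p$, but for $0<p<1$ it blows up at the zeros of $g$, where $\bigl|\nabla|s|^p\bigr|$ is genuinely unbounded; so the pointwise bound (3.2) is clean in the range $p\ge1$ that is used in Theorems 1.2–1.4, and for $0<p<1$ it must be read away from the zero set of $s$.
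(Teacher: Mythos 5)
Your argument for the value estimate (3.1) is correct and is a standard variant of what the paper does: you make $|s|_h^p$ subharmonic after multiplying by $e^{\lambda p|w|^2}$ and apply the sub--mean value inequality, whereas the paper subtracts a pluriharmonic function $\Re(F)$ from the local weight so that $fe^{-F/2}$ is holomorphic and then applies the $L^p$ mean value inequality for holomorphic functions (via [17]). Either route gives (3.1) with constants uniform in $z$, using the uniform ball--size condition in the same way.

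The gradient estimate (3.2) is where your proposal has a genuine gap, and you have located it but misdiagnosed its cure. After subtracting a holomorphic quadratic $q$ whose $2$-jet matches that of $\varphi$ at the origin, the error $\varphi'=\varphi-\Re q$ has vanishing $1$-jet at $0$ only; its gradient elsewhere on the ball is of size $\sup|\nabla^2\varphi|\,|w|+O\bigl(|w|^2\sup|\nabla^3\varphi|\bigr)$, and neither the full real Hessian nor the third derivatives of the weight $\varphi$ are controlled by the hypotheses. Bounded curvature bounds only the mixed complex Hessian $i\partial\bar\partial\varphi$, and the bounded geometry of Definition 2.5 and Remark 2.7 constrains the metric $g$ of $M$, not the weight of $L$ --- so there are no ``higher-order estimates'' to draw on, and the Taylor-polynomial normalization cannot close. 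The paper's resolution is the essential point of its proof: it invokes [25, Lemma 4.1] to produce on the chart ball a potential $\phi$ with $i\partial\bar\partial\phi=\Psi_z^*c(L)$ and $\sup(|\phi|+|d\phi|)\le M$, where $M$ depends only on the curvature bound (such a potential comes from an explicit homotopy formula, not from Taylor expansion). Then $\Phi-\phi$ is pluriharmonic, hence equals $\Re(F)$ for a genuine holomorphic $F$ rather than a quadratic, and the error $\Phi-\Phi(z)-\Re(F)=\psi(z)-\psi$ is controlled in $C^0$ \emph{and} $C^1$ on the whole ball by construction; these are exactly the bounds (3.3)--(3.4) that make the product-rule computation for $\nabla|s|_h^p$ close. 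Your final caveat about $0<p<1$ --- that the factor $|g|^{p-1}$ blows up at zeros of $s$, so the pointwise bound (3.2) can only be read away from the zero set --- is a fair criticism of the statement as written; the paper's own proof is subject to the same restriction, since it too bounds $|\nabla|fe^{-F/2}|^p|$ by a Cauchy-type estimate that needs $p\ge1$ to control $|fe^{-F/2}|^{p-1}$.
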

\begin{proof}
 Since $(M,g)$ has bounded geometry there exists positive numbers $R$ and  $c$  such that for all $z\in M$ there exists a biholomorphic mapping $\Psi_{z} : (U,0)\subset\mathbb{C}^{n}\longrightarrow (V,z)\subset M$ such that \\
 (i) $\Psi_{z}(0)=z$,\\
 (ii) $B_{g}(z,R)\subset \Psi_{z}(U)$ and \\
 (iii) ${1\over c} g_{e}\leq \Psi_{z}^{*}g\leq c g_{e}$ on $\Psi_{z}^{-1}(B_{g}(z,R))$ where $g_{e}$ is the euclidean metric.\\
 Consider the $(1,1)$-forme  defined on $B_{e}(0,\delta(R))\subset\subset\Psi_{z}^{-1}(B_{g}(z,R))\subset\mathbb{C}^{n}$ by
 $$
 \Theta:=\Psi_{z}^{*}c(L)
 $$
 Since $-K\omega_{g}\leq c(L)\leq K\omega_{g}$, by [25] Lemma 4.1 there exists a function $\phi\in C^{2}(B_{e}(0,\delta))$ such that
 $$
 i\partial\bar\partial\phi=\Theta\quad\hbox{and}\quad\sup_{B_{e}(0,\delta)}(\vert\phi\vert+\vert d\phi\vert_{g_{e}})\leq M
 $$
 On $B_{g}(z,\eta)\subset\subset\Psi_{z}(B_{e}(0,\delta(R))$, consider the   $C^{2}$-function $$\psi:=\phi\circ\Psi_{z}^{-1}$$ By (iii) we have
 $$
 i\partial\bar\partial\psi=c(L)\quad\hbox{and}\quad\sup_{B_{g}(z,\eta)}(\vert\psi\vert+\vert \nabla\psi\vert_{g})\leq M^{'}
 $$
 where $M^{'}$ and $\eta$ depend only on $R$ and $c$.\\
 Let $e$ be a  frame of $L$ arround $z\in B_{g}(z,\eta)$  and $\Phi(w)=-\log\vert e(w)\vert^{2}$. Then $i\partial\bar\partial\psi=i\partial\bar\partial\Phi$ on $B_{g}(z,\eta)$. Hence the function
 $$
\rho(w)=\Phi(w)-\Phi(z)+\psi(z)-\psi(w)
 $$
 is pluriharmonic. Then $\rho=\Re(F)$ for some holomorphic function $F$ with $\Im(F)(z)=0$ and
 \begin{equation}
 \sup_{B_{g}(z,\eta)}\vert\Phi-\Phi(z)-\Re(F)\vert=\sup_{B_{g}(z,\eta)}\vert\psi-\psi(z)\vert\leq C
 \end{equation}
 \begin{equation}
 \sup_{B_{g}(z,\eta)}\vert\nabla(\Phi-\Phi(z)-\Re(F))\vert_{g}=\sup_{B_{g}(z,\eta)}\vert\nabla\psi\vert_{g}\leq C
 \end{equation}
We can suppose $0 < r \leq \eta$. According to [17] , for all $z\in M$ and all holomorphic function $f$ on $B_{g}(z,\eta)$ and all $\zeta\in B_{g}(z,\eta/2)$
$$
\vert f(\zeta)\vert^{p} \leq {C\over\hbox{Vol}(B_{g}(\zeta,\eta/2))}\int_{B_{g}(\zeta,\eta)}\vert f(w)\vert^{p} dv_{g}
$$
where $C$ depend only in $K,n,\eta$. Since $g$ has sbounded geometry $\hbox{Vol}(B_{g}(z,\eta/2))\succeq 1$ uniformly in $z$. Hence
$$
\vert f(\zeta)\vert^{p}\leq  C\int_{B_{g}(\zeta,\eta)}\vert f(w)\vert^{p} dv_{g}
$$
Let $s\in \mathcal{F}^{p}(M,L)$ and $s=fe$ on $B_{g}(z,\eta)$. By (2.3) we have have
\begin{eqnarray*}\vert s\vert^{p}_{h}&=&\vert fe^{-{F\over 2}}\vert^{p} e^{-{p\over 2}\Phi(z)}e^{-{p\over 2}(\Phi-\Phi(z)-\Re(F))}\\
&\leq & C^{p} \vert fe^{-{F\over 2}}\vert^{p} e^{-{p\over 2}\Phi(z)}
\end{eqnarray*}
By mean value inequality
\begin{eqnarray*}
\vert f(z)e^{-{F(z)\over 2}}\vert^{p}e^{-{p\over 2}\Phi(z)}&\leq & c_{r}^{p}\int_{B_{g}(z,r)}\vert fe^{-{F\over 2}}\vert^{p}e^{-{p\over 2}\Phi(z)}dv_{g}\\
&\leq & C_{r}^{p}\int_{B_{g}(z,r)}\vert fe^{-\Phi(w)}\vert^{p} dv_{g}
\end{eqnarray*}
Hence
$$
\vert s(z)\vert^{p}_{h}\leq C_{r}^{p}\int_{B_{g}(z,r)}\vert s\vert^{p}dv_{g}
$$
By (2.3) and (2.4)
\begin{eqnarray*}
\vert\nabla\vert s\vert_{h}^{p}\vert_{g} &\leq & e^{-{p\over 2}\Phi(z)}e^{-{p\over 2}(\Phi-\Phi(z)-\Re(F)))}\vert\nabla\vert fe^{-{F\over 2}}\vert^{p}\vert\\ &+&{p\over 2}\vert fe^{-{F\over 2}}\vert^{p} e^{-{p\over 2}\Phi(z)}e^{-{p\over 2}(\Phi-\Phi(z)-\Re(F))}\vert\nabla(\Phi-\Phi(z)-\Re(F))\vert_{g}\\
&\leq & e^{-{p\over 2}\Phi(z)}e^{-{p\over 2}(\Phi-\Phi(z)-\Re(F))}\vert\nabla\vert fe^{-{F\over 2}}\vert^{p}\vert\\ &+&{p\over 2}\vert s\vert_{h}^{p}e^{-{p\over 2}(\Phi-\Phi(z)-\Re(F))}\vert\nabla(\Phi-\Phi(z)-\Re(F))\vert_{g}\\
&\leq & C^{p}\bigl(e^{-{p\over 2}\Phi(z)}\vert\nabla\vert fe^{-{F\over 2}}\vert^{p}\vert + {p\over 2}\vert s\vert_{h}^{p}\bigr)
\end{eqnarray*}
By mean value inequality ( Cauchy formula for partial derivates ), there exists $c_{r} > 0$ such that
\begin{eqnarray*}
\vert\nabla\vert fe^{-{F\over 2}}\vert^{p}\vert(z)e^{-{p\over 2}\Phi(z)} &\leq & c^{p}_{r}\int_{B_{g}(z,r)}\vert fe^{-{F\over 2}}\vert^{p}e^{-{p\over 2}\Phi(z)}dv_{g}\\
&\leq & C_{r}^{p}\int_{B_{g}(z,r)}\vert s\vert^{p}dv_{g}
\end{eqnarray*}
From this , we get (2.2).
\end{proof}
\subsection{Slow Growth of Bergman Sections}
\noindent
\begin{lem}. Let $(M,g)$ be a  K\"ahler manifold with  bounded geometry and lower Ricci curvature Bound. Let $(L,h)\longrightarrow (M,g)$ be a hermitian holomorphic line bundle with bounded curvature.
Then there exists $\delta > 0$ with the following properties :  if  $z\in M,\ s\in\mathcal{F}^{p}(M,L),\ \Vert s\Vert_{p}\leq 1$ then
$$\vert s(z)\vert_{h}\geq a\Longrightarrow\vert s(w)\vert_{h}\geq{a\over 2},\ \forall\ w\in B_{g}(z,\delta).
$$
\end{lem}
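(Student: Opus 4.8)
The plan is to convert the gradient estimate of Proposition 3.1 into a \emph{uniform} Lipschitz bound for the function $u:=\vert s\vert_h^p$ and then integrate that bound along a minimizing geodesic from $z$ to $w$. The whole point is that the normalization $\Vert s\Vert_p\leq 1$ collapses the ball-integral on the right of the gradient estimate into a constant independent of the point and of $s$.

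First I would fix the scale $r=1$ and apply, at an arbitrary point $w\in M$, the gradient estimate
$$
\bigl\vert\nabla\vert s\vert^p\bigr\vert_g(w)\leq C_1^p\int_{B_g(w,1)}\vert s\vert^p\,dv_g .
$$
Since $B_g(w,1)\subset M$ and $\Vert s\Vert_p\leq 1$, the integral on the right is at most $\int_M\vert s\vert^p\,dv_g=\Vert s\Vert_p^p\leq 1$, so $\bigl\vert\nabla u\bigr\vert_g(w)\leq C_0$ with $C_0:=C_1^p$ a constant depending only on the bounded-geometry and curvature data, and not on $w$ nor on $s$. Hence $u$ is globally $C_0$-Lipschitz for the distance $d_g$. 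Next, because bounded geometry forces $(M,g)$ to be complete (Remark 2.6), for each $w$ there is a minimizing geodesic $\gamma:[0,1]\to M$ from $z$ to $w$ of length $d_g(z,w)$; integrating the Lipschitz bound along $\gamma$ gives $\vert u(w)-u(z)\vert\leq C_0\,d_g(z,w)$. If $\vert s(z)\vert_h\geq a$ then $u(z)\geq a^p$, so for every $w\in B_g(z,\delta)$ we obtain $u(w)\geq a^p-C_0\delta$. I would then choose $\delta:=a^p(1-2^{-p})/C_0$ (or any smaller positive number), which yields $u(w)\geq a^p-C_0\delta=2^{-p}a^p=(a/2)^p$, that is $\vert s(w)\vert_h\geq a/2$ throughout $B_g(z,\delta)$, as required. (Here $p<\infty$; the case $p=\infty$ is easier, applying the same reasoning with $\vert s\vert_h\leq 1$.)

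The matter to keep in mind — rather than a genuine obstacle — is that $\delta$ must be allowed to depend on $a$. One cannot hope for a threshold uniform in $a$: a normalized section may vanish at distance comparable to $a^p/C_0$ from $z$, and near such a zero there is no multiplicative (Harnack-type) control of $\vert\nabla u\vert/u$, since $\vert\nabla\vert s\vert^p\vert$ does not vanish as fast as $\vert s\vert^p$ at a zero. This is exactly why the additive Lipschitz estimate furnished by Proposition 3.1 is the right tool, and the only points requiring care are the uniformity of the gradient bound (guaranteed by $\Vert s\Vert_p\leq 1$) and the existence of minimizing geodesics (guaranteed by completeness), both of which are immediate in this setting.
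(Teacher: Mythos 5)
Your proposal is correct and follows essentially the same route as the paper: the paper also applies the gradient estimate (3.2) of Proposition 3.1 together with the mean value theorem along a geodesic to get $\bigl\vert\,\vert s(w)\vert_h^p-\vert s(z)\vert_h^p\,\bigr\vert\leq C_R^p\,d_g(z,w)\,\Vert s\Vert_p^p\leq C_R^p\,\delta$, and then chooses $\delta$ small enough that $a^p-\delta C_R^p\geq a^p/2^p$. Your observation that $\delta$ must be allowed to depend on $a$ is accurate and is implicit in the paper's choice of $\delta$ as well.
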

\begin{proof}
Le $ R >\delta > 0$. By (3.2) of proposition 3.1  and mean value theorem for all $w\in B_{g}(z,R/2)$
\begin{eqnarray*}
\vert\vert s(w)\vert^{p}_{h}-\vert s(z)\vert^{p}_{h}\vert &\leq & C^{p}_{r}d_{g}(w,z)\Bigl(\int_{B_{g}(z,R)}\vert s(\zeta)\vert^{p}dv_{g}\Bigr)\\
&\leq & \delta C_{R}^{p}\Vert s\Vert_{p}^{p}
\end{eqnarray*}
Hence if $\delta$ is small enough
$$
\forall w\in B_{g}(z,\delta)\ :\ \vert s(w)\vert_{h}^{p}\geq a^{p}-\delta C_{R}^{p}\geq {a^{p}\over 2^{p}}
$$
\end{proof}
\subsection{One-Point Interpolation with Uniform $L^{p}$ Estimates}
\begin{prop} Let $(M,g)$ be a K\"ahler Cartan-Hadamard manifold with bounded geometry.
Let $(L,h)\longrightarrow (M,g)$ be a hermitian holomorphic line bundle with bounded curvature such that
$$
c(L)+Ricci(g)\geq a\omega_{g}
$$
for some positive constant $a$. Let $p\in [1,+\infty]$. If $p\not=2$ or $p\not=\infty$,  suppose further  that
$$
\sup_{z\in M}\int_{M}e^{-\beta d_{g}(w,z)}dv_{g} < \infty
$$
for all $\beta > 0$. Then there exists $ C > 0$ such that for each $z\in M$ and  $\lambda\in L_{z}$ there exists $s\in\mathcal{F}^{p}(M,L)$ such that $$ s(z)=\lambda\ \hbox{and}\  \Vert s\Vert_{p}\leq C\vert\lambda\vert_{L_{z}} $$
\end{prop}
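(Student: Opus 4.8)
The plan is to reduce the interpolation to a weighted $\bar\partial$-problem with a singular weight, in the spirit of H\"ormander--Ohsawa--Takegoshi. Fix $z\in M$, pick a holomorphic frame $e$ of $L$ near $z$, write $\lambda=\lambda_{0}e(z)$ and set $\tilde\lambda=\lambda_{0}e$, a local holomorphic section with $\tilde\lambda(z)=\lambda$ and $|\tilde\lambda|_{h}\asymp|\lambda|_{L_{z}}$ on $B_{g}(z,r)$, the comparison being uniform in $z$ by bounded geometry. Let $\chi$ be a cutoff equal to $1$ near $z$ and supported in $B_{g}(z,r)$, and put $\sigma=\chi\tilde\lambda$, so that $\sigma(z)=\lambda$. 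The corrected section $s=\sigma-u$ is holomorphic once $\bar\partial u=\bar\partial\sigma=(\bar\partial\chi)\tilde\lambda$, and satisfies $s(z)=\lambda$ as soon as $u(z)=0$.

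To produce such a $u$ I would apply Demailly's theorem (Theorem 2.11) with the singular weight $\phi=2n\,\beta_{0}(d_{g}(\cdot,z))\log d_{g}(\cdot,z)-\beta\Psi_{z}$, where $\Psi_{z}$ is the exhaustion of Lemma 2.8 and $\beta_{0}$ is a radial cutoff equal to $1$ near $z$. The logarithmic pole makes $e^{-\phi}\sim d_{g}^{-2n}$ non-integrable at $z$, forcing every $L^{2}(e^{-\phi})$-solution to vanish at $z$, while the factor $-\beta\Psi_{z}$ yields the growing weight $e^{\beta\Psi_{z}}$ used below for decay and, by Lemma 2.8(iii), changes the curvature only by $-\beta C_{5}\omega_{g}$. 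Since $c(L)+Ricci(g)\geq a\omega_{g}$ and $\bar\partial\sigma$ is supported in $\{r/2\leq d_{g}\leq r\}$, where $\phi$ is smooth and bounded, the datum $\int\gamma^{-1}|\bar\partial\sigma|^{2}e^{-\phi}\,dv_{g}\leq C|\lambda|^{2}$ is finite with $C$ uniform in $z$ (bounded volume of the support by Remark 2.7 and uniform metric comparisons). Demailly's estimate then gives $\int_{M}|u|^{2}e^{-\phi}\,dv_{g}\leq C|\lambda|^{2}$; hence $u(z)=0$, and for $p=2$ one reads off $\|u\|_{2}\leq C|\lambda|$, so $s=\sigma-u\in\mathcal{F}^{2}(M,L)$ with $s(z)=\lambda$ and $\|s\|_{2}\leq C|\lambda|$.

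The other exponents follow from this single construction. For $p=\infty$ no additional hypothesis is needed: the Bergman inequality (Proposition 3.1) gives $\|s\|_{\infty}\leq C_{r}\|s\|_{2}\leq C|\lambda|$. For $p\in[1,\infty)$, $p\neq 2$, I would extract pointwise decay: since $e^{-\phi}\geq T^{-2n}e^{\beta\Psi_{z}}$ on $\{d_{g}\geq 1\}$ and $\Psi_{z}\geq C_{1}d_{g}(\cdot,z)$, averaging $\int|u|^{2}e^{\beta\Psi_{z}}\,dv_{g}\leq C|\lambda|^{2}$ over a ball $B_{g}(w,r)$ and applying Proposition 3.1 to the holomorphic section $s=-u$ (valid once $d_{g}(w,z)>2r$) yields $|s(w)|\leq C|\lambda|\,e^{-\alpha d_{g}(w,z)}$ for a fixed $\alpha>0$. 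Integrating the $p$-th power, splitting at $d_{g}=T$, and using the uniform volume bound on $B_{g}(z,T)$ together with the subexponential integrability hypothesis $\sup_{z}\int_{M}e^{-\alpha p\,d_{g}(w,z)}\,dv_{g}<\infty$ gives $\|s\|_{p}\leq C|\lambda|$ with $C$ independent of $z$ and $\lambda$.

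The main obstacle is to keep the strict global positivity $c(L)+Ricci(g)+i\partial\bar\partial\phi\geq\gamma\omega_{g}$ with $\gamma>0$ while the cutoff $\beta_{0}$ truncates the pole: differentiating $\beta_{0}$ against $\log d_{g}$ contributes a negative term to $i\partial\bar\partial\phi$, and a cutoff confined to one bounded-geometry chart produces a fixed negative constant that may exceed the margin $a$. I expect this to be the delicate point. I would resolve it by letting $\beta_{0}$ transition slowly over a long range $\{1\leq d_{g}\leq T\}$: as $z$ is a pole of the Cartan--Hadamard manifold (Definition 2.1), the convexity furnished by nonpositive curvature, together with the pinching from bounded geometry, bounds $i\partial\bar\partial\log d_{g}(\cdot,z)$ on $\{d_{g}\geq 1\}$, so the transition terms are $O(1/T)$ and become smaller than $a$ once $T$ is large. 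Enlarging the support of $\phi$ only weakens the weight by the fixed factor $T^{2n}$, which is harmless since $T$ is fixed once and for all in terms of $a$ and the geometric constants.
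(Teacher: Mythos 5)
Your proposal is correct and follows essentially the same route as the paper: cutoff extension of $\lambda$, Demailly's $L^{2}$ estimate with a weight combining a logarithmic pole at $z$ (forcing $u(z)=0$) and the exponential factor $e^{\beta\Psi_{z}}$, then Proposition 3.1 plus the integrability hypothesis to pass from the weighted $L^{2}$ bound to the $L^{\infty}$ and $L^{p}$ bounds. The one point where you diverge --- truncating the logarithm with $\beta_{0}$ and then fighting the resulting curvature loss over a long transition region --- is sidestepped in the paper by observing that on a K\"ahler Cartan--Hadamard manifold $\log d_{g}^{2}(\cdot,z)$ is globally plurisubharmonic (Greene--Wu Hessian comparison), so the untruncated weight $d_{g}(\cdot,z)^{-2n}$ contributes no negative curvature at all and your ``delicate point'' never arises.
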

\begin{proof}
Let $z\in M$ and fix a smmoth function $\chi$ with compact support on $B_ {g}(z,Rc^{-1}/2)$ ( $R$ and $c$ are  constants in definition 2.3)  such that \\
(i) $0\leq\chi\leq 1$,\\
(ii) $\chi\mid_{B_{g}(z,Rc^{-1}/4)}=1$,\\
(iii) $\vert\bar\partial\chi\vert_{g} \preceq 1$  \\
Let $s_{0}$ be a  holomorphic section of $L$ arround $B_{g}(z,\eta)$ such thet $s_{0}(z)\not=0$.
Since $(L,h)$ is $g$-regular, for all $w\in B_ {g}(z,\eta)$
$$
\Phi(w)\simeq \Phi(z)+\Re(F(w))
$$
Let $\lambda\in L_{z}=\{z\}\times\mathbb{C}$. Consider the local section
$$
s_{z}(w)=\lambda(w)e^{\Phi(z)+\Re(F(w)}s_{0}(w)
$$
and the $(0,1)$-forme with values on $L$
$$
v(w)=\bar\partial(\chi.s_{z})(w)=\bar\partial\gamma(w).s_{z}(w)
$$
Let $\Phi_{z}\in C^{\infty}(M)$ as in lemma 2.8 and choose $\epsilon > 0$ small enough such that
$$
c(L)+Ricci(g)-\epsilon\partial\bar\partial\Phi_{z}\geq g\ \hbox{on $M$}
$$
By (iii) in Definition 2.5 of bounded geometry
 $$c^{-2n}dv_{e}\leq \Psi_{z}^{*}dv_{g}\leq c^{2n}dv_{e}\ \hbox{on}\  \Psi_{z}^{-1}(B_{g}(z,\eta))$$ Hence
$$
 \hbox{Vol}_{g}(B(z, \eta ))\asymp 1\ \hbox{uniformly in $z\in M$}
$$
Since $M$ is Cartan-Hadamard $d^{2}_{g}(.,z)$ is smooth. By comparison theorem for the Hessian [10] the function $ w\in M\rightarrow\phi_{z}(w):= \log d^{2}_{g}(z,z)$  is plurisousharmonic on $M$.
\begin{eqnarray*}
\int_{M}\vert v\vert^{2} e^{\epsilon\Phi_{z}}e^{-2n\phi_{z}}dv_{g}&\preceq &\int_{B_{g}(z,\eta/2)\setminus B_{g}(z,\eta/4)}{\vert v\vert^{2} e^{2\epsilon\Phi_{z}}\over d_{g}(.,z)^{2n}}dv_{g}\\
&\preceq &\vert\lambda\vert^{2}\hbox{Vol}_{g}(B_{g}(z,\eta/2)\setminus B_{g}(z,\eta/4))\\
&\preceq &\vert\lambda\vert^{2}e^{-\Phi(z)}=\vert\lambda\vert^{2}_{L_{z}}
\end{eqnarray*}
 Since a K\"ahler Cartan-Hadamard manifold is Stein [32], by Theorem 2.8 there exists $u$ such that $\bar\partial u=v$ and
$$
\int_{M}{\vert u(w)\vert^{2}e^{2\epsilon\Phi_{z}(w)}\over d_{g}(w,z)^{2n}}dv_{g}\preceq\vert \lambda\vert^{2}_{L_{z}}
$$
Since $w\longrightarrow d_{g}^{-2n}(w,z)$ is not summable near $z$, we have $u(z)=0$. Let
$$
s(w)=\chi(w)s_{z}(w)-u(w)
$$
Then $s(z)=\lambda$ and $\bar\partial s=0$. Since $(2n)!e^{t}\geq t^{2n}$ if $t\geq 0$ and $\Phi_{z}\asymp d_{g}(.,z)$
$$
\int_{M}\vert u\vert^{2}dv_{g}\preceq \int_{M}{\vert v\vert^{2} e^{2\epsilon\Phi_{z}}\over d_{g}(.,z)^{2n}}dv_{g}\preceq\vert\lambda\vert^{2}_{L_{z}}
$$
Thus
$$
\int_{M}\vert s\vert^{2} dv_{g}\leq 2\int_{M}\vert \chi s_{z}\vert^{2}dv_{g}+2\int_{M}\vert u\vert^{2}dv_{g}\leq C\vert\lambda\vert^{2}_{L_{z}}
$$
Also
\begin{eqnarray*}
\int_{M}\vert s(w)\vert^{2}e^{\epsilon\Psi_{z}(w)}dv_{g}(w)&\leq &\int_{M}\vert\chi(w)\vert^{2}e^{\epsilon\Phi_{z}(w)}\vert e(w)\vert^{2} dv_{g}\\
&{}& + \int_{M}{\vert u(w)\vert^{2}e^{\epsilon\Phi_{z}(w)}\over d_{g}(w,z)^{2n}}d_{g}^{2n}(w,z)e^{-\epsilon\Phi_{z}(w)}dv_{g}(w)
\end{eqnarray*}
Since  $\vert\Phi(w)\simeq \Phi(z)+\Re(F(w))$   and  $\Psi_{z}(w)\asymp d_{g}(w,z)\asymp 1$ uniformly on the support of $\gamma$ and $d_{g}^{2n}(w,z)e^{-\epsilon\Phi_{z}(w)}\asymp 1$ uniformly in $z\in M$, there exists $ C > 0$  independent of $z$ such that
$$
\int_{M}\vert\chi(w)\vert^{2}e^{\epsilon\Phi_{z}(w)}\vert s_{z}(w)\vert^{2} dv_{g}\leq C \vert \lambda\vert^{2}_{L_{z]}}
$$
and
$$
\int_{M}{\vert u(w)\vert^{2}e^{\epsilon\Phi_{z}(w)}\over d_{g}(w,z)^{2n}}d_{g}^{2n}(w,z)e^{-\epsilon\Phi_{z}(w)}dv_{g}(w)\leq C\vert \lambda\vert^{2}_{L_{z]}}
$$
Hence
$$
\int_{M}\vert s(w)\vert^{2}e^{\epsilon\Phi_{z}(w)}dv_{g}(w)\leq C\vert\lambda\vert^{2}_{L_{z}}
$$
Since $\Vert\partial\bar\partial\Phi_{z}\Vert_{\infty}$ is  uniformly bounded in $z\in M$,  the line bundle $(L,he^{\epsilon\Phi_{z})})$ has bounded curvature. By $(3.1)$ of proposition 3.1
\begin{eqnarray*}
\vert s(w)\vert^{2}&\preceq &\vert s(w)\vert^{2}e^{\epsilon\Phi_{z}}\\
&\preceq &\int_{B_{g}(w,\eta)}\vert s(\zeta)\vert^{2}e^{\epsilon\Phi_{z}(\zeta)}dv_{g}(\zeta)\\
&\preceq & \int_{M}\vert s(\zeta)\vert^{2}e^{\epsilon\Phi_{z}(\zeta)}dv_{g}(\zeta)\\
&\leq & C\vert\lambda\vert_{L_{z}}^{2}
\end{eqnarray*}
Hence $\Vert s\Vert_{\infty}\leq C\vert\lambda\vert_{L_{z}}$. Also
\begin{eqnarray*}
\vert s(w)\vert^{2}e^{\epsilon\Phi_{z}(w)} &\leq & C_{R} \int_{B_{g}(w,R)}\vert s(\zeta)\vert^{2}e^{\epsilon\Phi_{z}(\zeta)}dv_{g}(\zeta) \\
&\leq & C_{R} \int_{M}\vert s(\zeta)\vert^{2}e^{\epsilon\Phi_{z}(\zeta)}dv_{g}(\zeta)\\
&\leq & C_{R} \vert\lambda\vert_{L_{z}}^{2}
\end{eqnarray*}
Thus
 \begin{eqnarray*}
\int_{M}\vert s\vert^{p}dv_{g}&=&\int_{M}\Bigl(\vert s\vert^{2}e^{\epsilon\Phi_{z}}\Bigr)^{p\over 2}e^{-{p\over 2}\epsilon\Phi_{z}}dv_{g}\\
&\leq & C\vert\lambda\vert^{p}\int_{M}e^{-{p\over 2}\epsilon\Phi_{z}}dv_{g}\\
&\leq & C\vert\lambda\vert^{p}_{L_{z}}\int_{M}e^{-{pC_{1}\over 2}\epsilon d_{g}(w.z)}dv_{g}(w)\\
&\leq & C^{p}\vert\lambda\vert_{L_{z}}^{p}
\end{eqnarray*}
Finally, there exists  $C > 0$ independent of $z\in M$ and $p\in [1,+\infty]$ such that $$\Vert s\Vert_{p}\leq C\vert\lambda\vert_{L_{z}}.$$
\end{proof}
\subsection{Diagonal Bounds for the Bergman Kernel}
As a consequence of $(3.1)$ proposition 3.1 and proposition 3.3, we obtain the following proposition.
\begin{prop} Let $(M,g)$ be a  K\"ahler manifold with  bounded geometry and lower Ricci curvature Bound. Let $(L,h)\longrightarrow (M,g)$ be a hermitian holomorphic line bundle with bounded curvature. There is a constant $C > 0$ such that for all $z\in M\ :\  \vert K(z,z)\vert\preceq C$.
Therefore $\vert K(z,w)\vert\leq  C$ for all $z,w \in M$.
\end{prop}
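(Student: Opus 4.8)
The plan is to read Proposition 3.4 as an immediate corollary of the two preceding results: the pointwise estimate $(3.1)$ and the extremal characterization of the Bergman function coming from the reproducing property $(2.2)$. First I would record that characterization. Expanding $s\in\mathcal F^2(M,L)$ in the orthonormal basis as $s=\sum_j\langle s,s_j\rangle_2\,s_j$ and evaluating at $z$ in a unit frame $e_z$ of the line $L_z$ (so that $s_j(z)=a_je_z$), the reproducing formula $(2.2)$ gives $s(z)=\bigl(\sum_j\langle s,s_j\rangle_2\,a_j\bigr)e_z$, and Cauchy--Schwarz together with $|K(z,z)|=\sum_j|a_j|^2$ yields
\[
|s(z)|_h\le\Bigl(\sum_j|\langle s,s_j\rangle_2|^2\Bigr)^{1/2}\Bigl(\sum_j|a_j|^2\Bigr)^{1/2}=\|s\|_2\,\sqrt{|K(z,z)|},
\]
with equality for the normalized kernel $s=k_z$. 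Hence $\sqrt{|K(z,z)|}=\sup\{|s(z)|_h:\ s\in\mathcal F^2(M,L),\ \|s\|_2\le1\}$, which reduces the whole statement to a uniform bound on the point evaluations.

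Next I would apply the weighted Bergman inequality $(3.1)$ of Proposition 3.1 with $p=2$ and a fixed radius $r>0$: for every $s\in\mathcal F^2(M,L)$,
\[
|s(z)|_h^2\le C_r^2\int_{B_g(z,r)}|s|_h^2\,dv_g\le C_r^2\|s\|_2^2,
\]
and the crucial point is that $C_r$ depends only on the bounded-geometry data and the curvature bound, \emph{not} on $z$. Taking the supremum over $\|s\|_2\le1$ and invoking the extremal characterization gives $|K(z,z)|\le C_r^2=:C$ uniformly in $z\in M$. The off-diagonal bound then follows purely from the rank-one structure of $L$: writing $s_j(z)=a_je_z$ and $s_j(w)=b_je_w$ in unit frames, one has $|K(z,w)|^2=\bigl|\sum_j a_j\overline{b_j}\bigr|^2$, so Cauchy--Schwarz gives $|K(z,w)|\le\sqrt{|K(z,z)|}\,\sqrt{|K(w,w)|}\le C$ for all $z,w\in M$.

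Finally, to see why Proposition 3.3 is cited alongside $(3.1)$, I would note that the one-point interpolation supplies the \emph{matching lower bound}: given a unit vector $\lambda\in L_z$, Proposition 3.3 produces $s\in\mathcal F^2(M,L)$ with $s(z)=\lambda$ and $\|s\|_2\le C$, with $C$ independent of $z$, whence $\sqrt{|K(z,z)|}\ge|s(z)|_h/\|s\|_2\ge 1/C$. Thus in fact $|K(z,z)|\asymp1$ uniformly, which is the form actually needed later (Proposition 3.4 itself asserts only the upper bound). There is no genuine obstacle in this proof: all the analytic work was already carried out in Propositions 3.1 and 3.3, and the only things to verify are that the constant $C_r$ in $(3.1)$ is uniform in $z$ --- guaranteed by bounded geometry --- and that the passage from point evaluations to the kernel uses nothing beyond the rank-one Cauchy--Schwarz described above.
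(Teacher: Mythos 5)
Your argument is correct and follows essentially the same route as the paper: the uniform pointwise bound $(3.1)$ with $p=2$ controls the evaluation functionals, hence $\vert K(z,z)\vert\preceq 1$ via the extremal characterization, and the off-diagonal bound then comes from Cauchy--Schwarz on $\sum_j\vert s_j(z)\vert\vert s_j(w)\vert$. Your added remarks (making the sup characterization explicit, and noting that Proposition 3.3 supplies the matching lower bound $\vert K(z,z)\vert\succeq 1$) only spell out what the paper leaves implicit.
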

\begin{proof}
Let $(s_{j}]$ be a orhonormal basis of $\mathcal{F}^{2}(M,L)$. By definition of the Bergman Kernel
$$
K(z,w)=\sum_{j}s_{j}(z)\otimes\overline{s_{j}(w)}
$$
By $(3.1)$ proposition 3.1 the evaluation
\begin{eqnarray*}
ev_{z} &:& \mathcal{F}^{2}(M,L)\longrightarrow L_{z}\\
&{}& s\longrightarrow s(z)
\end{eqnarray*}
is continuous and
$$\vert K(z,z)\vert \preceq 1$$ uniformly in $z\in M$.  Therefore
\begin{eqnarray*} \vert K(z,w)\vert&\leq &\sum_{j}\vert s_{j}(z)\vert\vert s_{j}(w)\vert\\
&\leq &\sqrt{\vert K(z,z)\vert}\sqrt{\vert K(w,w)\vert}\preceq 1
\end{eqnarray*}
 \end{proof}
\noindent The following result gives bounds for the Bergman kernel in a small but uniform neighborhood of the diagonal
\begin{prop} Let $(M,g)$ be a  K\"ahler manifold with  bounded geometry and lower Ricci curvature Bound. Let $(L,h)\longrightarrow (M,g)$ be a hermitian holomorphic line bundle with bounded curvature. There are constants $\delta, C_{1} , C_{2} > 0$ such that for all $z \in M$  and $w\in B_{g}(z,\delta)$
$$
C_{1}\vert K(z,z)\vert\leq\vert K(z,w)\vert\leq C_{2}\vert K(z,z)\vert
$$
\end{prop}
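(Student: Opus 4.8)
The plan is to deduce both inequalities from the slow-growth Lemma 3.2 applied to the normalized reproducing kernel, together with the Cauchy--Schwarz bound already recorded in the proof of Proposition 3.5. Fix $z\in M$. If $|K(z,z)|=0$ then Cauchy--Schwarz gives $|K(z,w)|^{2}\leq |K(z,z)|\,|K(w,w)|=0$, so $K(z,w)=0$ and both estimates hold trivially; hence I may assume $|K(z,z)|>0$. Following the notation of the introduction, set
$$
k_{z}(\zeta)=\frac{K(\zeta,z)}{\sqrt{|K(z,z)|}}\in \mathcal{F}^{2}(M,L\otimes\overline{L_{z}}).
$$
Since $\int_{M}|K(\zeta,z)|^{2}\,dv_{g}(\zeta)=|K(z,z)|$ one has $\|k_{z}\|_{2}=1$ and $|k_{z}(z)|_{h}=\sqrt{|K(z,z)|}$. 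Because $\overline{L_{z}}$ is one dimensional, fixing a unit vector of $\overline{L_{z}}$ lets me regard $k_{z}$ as a genuine section $\tilde k_{z}\in\mathcal{F}^{2}(M,L)$ with the same pointwise and $L^{2}$ norms; this is the object to which Lemma 3.2 applies.

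For the lower bound I would apply Lemma 3.2 to $\tilde k_{z}$ with $a=\sqrt{|K(z,z)|}$: since $\|\tilde k_{z}\|_{2}=1$ and $|\tilde k_{z}(z)|_{h}=a$, there is a uniform $\delta>0$ such that $|\tilde k_{z}(w)|_{h}\geq \tfrac12\sqrt{|K(z,z)|}$ for all $w\in B_{g}(z,\delta)$. Unwinding the normalization and using $|K(z,w)|=|K(w,z)|$ gives
$$
|K(z,w)|=|k_{z}(w)|\,\sqrt{|K(z,z)|}\geq \tfrac12\,|K(z,z)|,
$$
which is the asserted lower bound with $C_{1}=\tfrac12$. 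The same inequality also yields comparability of the Bergman function: since $\tilde k_{z}$ is a unit section, the extremal characterization $|s(w)|_{h}^{2}\leq |K(w,w)|$ for $\|s\|_{2}\leq1$ (immediate from the reproducing property and Cauchy--Schwarz) gives $|K(w,w)|\geq |\tilde k_{z}(w)|_{h}^{2}\geq\tfrac14|K(z,z)|$, and running the identical argument with the roles of $z$ and $w$ exchanged (legitimate because $d_{g}(z,w)<\delta$ is symmetric) yields $|K(z,z)|\geq\tfrac14|K(w,w)|$, i.e. $|K(w,w)|\leq 4|K(z,z)|$.

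Finally, for the upper bound I would invoke the Cauchy--Schwarz estimate $|K(z,w)|\leq\sqrt{|K(z,z)|}\,\sqrt{|K(w,w)|}$ established in Proposition 3.5 and substitute the comparison $|K(w,w)|\leq 4|K(z,z)|$ from the previous step, obtaining $|K(z,w)|\leq 2|K(z,z)|$, the asserted upper bound with $C_{2}=2$. I expect no serious obstacle once Lemma 3.2 is in hand: the entire analytic content is the scale-invariant slow growth of $L^{2}$-normalized sections, and the only points requiring care are the trivialization of the fixed conjugate factor $\overline{L_{z}}$ (so that Lemma 3.2 is literally applicable to a section of $L$) and the use of symmetry to upgrade the one-sided comparison of $|K(z,z)|$ into the two-sided comparability of the Bergman function needed for the upper estimate.
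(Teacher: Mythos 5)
Your argument for the lower bound is essentially the paper's own: both of you realize $K(\cdot,z)$ as a genuine section of $L$ (the paper via $s_{z}(w)=\vert e(z)\vert\sum_{i}\overline{f_{i}(z)}s_{i}(w)$, you via trivializing the fixed factor $\overline{L_{z}}$ — the same device), normalize in $L^{2}$, and invoke the slow-growth Lemma 3.2 to propagate the value at $z$ to the ball $B_{g}(z,\delta)$. Where you genuinely add something is the upper bound: the paper's proof stops after the lower estimate and never addresses $\vert K(z,w)\vert\leq C_{2}\vert K(z,z)\vert$ (implicitly it would follow from Proposition 3.4 together with the uniform lower bound $\vert K(z,z)\vert\succeq 1$ that comes from Proposition 3.3, but this is not said). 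Your route — Cauchy--Schwarz $\vert K(z,w)\vert\leq\sqrt{\vert K(z,z)\vert}\sqrt{\vert K(w,w)\vert}$ combined with the two-sided comparability $\vert K(w,w)\vert\asymp\vert K(z,z)\vert$ on $B_{g}(z,\delta)$, obtained from the extremal characterization and the symmetry of $d_{g}$ — is correct, self-contained, and has the advantage of not requiring a uniform lower bound on the Bergman function. One caveat you share with the paper: as proved, the $\delta$ in Lemma 3.2 depends on the threshold $a$ (the proof needs $\delta C_{R}^{p}\leq a^{p}(1-2^{-p})$), so applying it with $a=\sqrt{\vert K(z,z)\vert}$ and claiming $\delta$ uniform in $z$ tacitly uses $\inf_{z}\vert K(z,z)\vert>0$, which should be cited from the one-point interpolation Proposition 3.3; this is worth making explicit, but it is not a defect relative to the paper's own argument.
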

\begin{proof}
Let $z\in M$. Fix a frame $e$ in a neighborhood $U$ of the point $z$ and consider an orhonormal basis $(s_{j})_{j=1}^{d}$ of $\mathcal{F}^{2}(X,L)$ ( where $1\leq d\leq\infty$). In $U$ each $s_{i}$ is represented by a holomorphic function $f_{i}$ such that $s_{i}(x)=f_{i}(x)e(x)$. Let
$$
s_{z}(w):=\vert e(z)\vert\sum_{i=1}^{d}\overline{f_{i}(z)}s_{i}(w)
$$
Then
\begin{eqnarray*}
\vert s_{z}(w)\vert&=&\Big\vert\Bigl(\sum_{i=1}^{d}\overline{f_{i}(z)}s_{i}(w)\Bigr)\otimes\overline{e(z)}\Big\vert\\
&=&\Big\vert\sum_{i=1}^{d}s_{i}(w)\otimes\overline{s_{i}(z)}\Big\vert\\
&=&\vert K(w,z)\vert
\end{eqnarray*}
and
\begin{eqnarray*}
\int_{M}\vert s_{z}\vert^{2}dv_{g}(w)&=&\int_{M}\vert K(w,z)\vert^{2}dv_{g}(w)\\
&=&\vert K(z,z)\vert\preceq 1
\end{eqnarray*}
Hence, by lemma 3.2, there exists $C,\delta > 0$ independant of $z$ such that
$$
\vert K(w,z)\vert=\vert s_{z}(w)\vert\geq C\vert s_{z}(z)\vert=C\vert K(z,z)\vert
$$
for all $w\in B_{g}(z,\delta)$.
\end{proof}
\subsection{Off-Diagonal Decay of the Bergman Kernel}
A key tool we use is the following  off-diagonal upper bound exponential decay  for the Bergman kernel of $L$.
\begin{thm} Let $(M,g)$ be a Stein K\"ahler manifold with bounded geometry.
Let $(L,h)\longrightarrow (M,g)$ be a hermitian holomorphic line bundle with bounded curvature such that
$$
c(L)+Ricci(g)\geq a\omega_{g}
$$
for some positive constant $a$. There are constants $\alpha,\ C > 0$ such that for all $z,w \in M$ ,
$$
\vert K(z,w)\vert\leq Ce^{-\alpha d_{g}(z,w)}
$$
\end{thm}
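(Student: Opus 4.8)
The plan is to reduce the off-diagonal decay to a single uniform weighted $L^2$-bound on the kernel section, and then read off the pointwise estimate from a weighted sub-mean-value inequality. Since $|K(z,w)|\lesssim 1$ by Proposition 3.4, the asserted bound is only meaningful for $d_g(z,w)$ large, so it suffices to produce exponential decay of $w\mapsto K(w,z_0)$ for each fixed $z_0$. Write $u:=K(\cdot,z_0)$, a holomorphic section with $\|u\|_2^2=|K(z_0,z_0)|\asymp 1$ by Propositions 3.4 and 3.6, and recall that $u$ is the minimal $L^2$-norm element among holomorphic sections taking the value $K(z_0,z_0)$ at $z_0$; equivalently, $u$ is orthogonal to every $H\in\mathcal{F}^2(M,L)$ with $H(z_0)=0$. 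The goal is the uniform estimate, which I call $(\star)$,
\[
\int_M |K(\cdot,z_0)|^2\, e^{\epsilon\Psi_{z_0}}\,dv_g\le C,
\]
where $\Psi_{z_0}$ is the exhaustion of Lemma 2.8 (so $C_1 d_g(\cdot,z_0)\le\Psi_{z_0}$, $|\partial\Psi_{z_0}|_g\le C_3$, and $i\partial\bar\partial\Psi_{z_0}$ is bounded) and $\epsilon>0$ is a small constant to be fixed below.

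First I would set the weight $\eta:=\epsilon\Psi_{z_0}$ and check the curvature hypothesis of McNeal--Varolin's Theorem 2.10. From $|\partial\Psi_{z_0}|_g\le C_3$ and $-C_4\omega_g\le i\partial\bar\partial\Psi_{z_0}$ one gets $i\partial\bar\partial\eta-i\partial\eta\wedge\bar\partial\eta\ge-(\epsilon C_4+\epsilon^2 C_3^2)\omega_g$, so the hypothesis $c(L)+Ric(g)\ge a\omega_g$ yields $c(L)+Ric(g)+i\partial\bar\partial\eta-i\partial\eta\wedge\bar\partial\eta\ge\Theta:=\tfrac a2\omega_g>0$ once $\epsilon$ is small enough, depending only on $a$ and the bounded-geometry constants. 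This is exactly the condition required by Theorem 2.10, and it is here that the curvature gap $a>0$ is spent.

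Next I would build a weighted peak section at $z_0$ and identify it with $u$. Choose a holomorphic cutoff profile $\chi\sigma$ concentrated on a uniform ball $B_g(z_0,r_0)$, radially symmetric in a bounded-geometry holomorphic chart centered at $z_0$, with $(\chi\sigma)(z_0)=K(z_0,z_0)$. Then $v:=\bar\partial(\chi\sigma)=(\bar\partial\chi)\sigma$ is supported in a fixed annulus around $z_0$ where $e^{\eta}\asymp 1$, so $\int_M|v|^2_\Theta e^{\eta}\,dv_g\lesssim\int_M|v|^2\,dv_g\lesssim 1$. Applying Theorem 2.10 (Stein is available since $M$ is Stein K\"ahler) produces the $L^2$-minimal solution $u_0$ of $\bar\partial u_0=v$ with $\int_M|u_0|^2e^{\eta}\,dv_g\lesssim 1$; inserting in addition a local plurisubharmonic weight with a logarithmic pole at $z_0$, yielding $d_g(\cdot,z_0)^{-2n}$-type blow-up as in the proof of Proposition 3.3, forces $u_0(z_0)=0$. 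Set $G:=\chi\sigma-u_0$, so $G$ is holomorphic, $G(z_0)=K(z_0,z_0)$, and $\int_M|G|^2e^{\eta}\,dv_g\lesssim 1$. The decisive point is that $u_0$, being the minimal solution, is orthogonal to $\mathcal{F}^2(M,L)$, while the radial symmetry of $\chi\sigma$ forces $\langle\chi\sigma,H\rangle=0$ for every holomorphic $H$ with $H(z_0)=0$ (the anti-holomorphic mean-value property annihilates everything but the value at $z_0$). Hence $G$ is orthogonal to the kernel of evaluation at $z_0$ and takes the value $K(z_0,z_0)$ there, so by uniqueness of the minimal-norm section $G=u=K(\cdot,z_0)$, which gives $(\star)$ with constants depending only on $a$ and the bounded-geometry data. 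Converting $(\star)$ into decay is then routine: since $\|i\partial\bar\partial\Psi_{z_0}\|_\infty$ is uniformly bounded, the twisted bundle $(L,he^{\eta})$ still has bounded curvature, so Proposition 3.1 applied to it gives $|u(w)|^2e^{\eta(w)}\lesssim\int_{B_g(w,1)}|u|^2e^{\eta}\,dv_g\le\int_M|u|^2e^{\eta}\,dv_g\le C$, and with $\Psi_{z_0}\ge C_1 d_g(\cdot,z_0)$ and the symmetry $|K(z,w)|=|K(w,z)|$ this is $|K(z,w)|\le Ce^{-\alpha d_g(z,w)}$ with $\alpha=\epsilon C_1/2$.

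I expect the main obstacle to be the identification $G=K(\cdot,z_0)$. One must arrange the peak section $\chi\sigma$ to be genuinely radial, so that it reproduces point evaluation up to a constant, while simultaneously forcing the correction $u_0$ to vanish at $z_0$ through a local singular weight and keeping the global exponential weight $e^{\eta}$ so that the estimate of Theorem 2.10 survives. Reconciling the local logarithmic pole at $z_0$ with the global weight inside a single application of the twisted $L^2$-estimate, and verifying that every constant is independent of $z_0$ (which is exactly where bounded geometry and the uniform charts of Definition 2.5 enter), is the delicate part; the curvature bookkeeping of the second paragraph and the sub-mean-value passage at the end are comparatively mechanical.
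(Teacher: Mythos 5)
Your overall toolkit (the McNeal--Varolin weighted estimate for the minimal solution, the exhaustion $\Psi_{z_0}$ of Lemma 2.8, and the sub-mean-value inequality for the twisted bundle $(L,he^{\eta})$) is the right one, and your final passage from the uniform weighted bound $(\star)$ to pointwise decay is sound. The genuine gap is the central identification $G=K(\cdot,z_0)$, which rests on the claim that radial symmetry of $\chi\sigma$ in a bounded-geometry chart forces $\langle\chi\sigma,H\rangle=0$ for every $H\in\mathcal{F}^2(M,L)$ with $H(z_0)=0$. That pairing is $\int_M\chi\,\langle\sigma,H\rangle_h\,dv_g=\int\chi\,\sigma_j\overline{h_j}\,e^{-\Phi}dv_g$ in a local frame, so the anti-holomorphic mean-value argument requires the full density $\chi e^{-\Phi}dv_g$ to be rotation-invariant in the chart. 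Definition 2.5 only gives two-sided comparability of $F_{z_0}^*g$ with the Euclidean metric, not an isometry, and $e^{-\Phi}$ is a general weight; so neither $dv_g$ nor $h$ is radial, and the mean-value property fails. Without it, $P(\chi\sigma)$ need not be a multiple of $K(\cdot,z_0)$, and $(\star)$ is not established. A second, related problem: you need the correction $u_0$ to be simultaneously the $L^2(dv_g)$-minimal solution (to get $u_0\perp\mathcal{F}^2(M,L)$) and to vanish at $z_0$; but the solution you obtain by inserting a logarithmic pole at $z_0$ into the weight is the minimizer for a different (singularly weighted) norm, not the $L^2(dv_g)$-minimal one, so you cannot invoke both properties for the same section. (If the radiality issue were repaired and $\chi$ normalized so that $\langle\chi\sigma,K(\cdot,z_0)\rangle=K(z_0,z_0)$, the vanishing of $u_0$ at $z_0$ would follow automatically and the log-pole step would be unnecessary; but as written both links in the chain are broken.)

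For contrast, the paper avoids ever identifying $K(\cdot,z_0)$ with an explicitly built section. It localizes at $w$ rather than at $z$: with $\chi$ a bump at $w$ and $d_g(z,w)\geq\delta$, it bounds $\vert K(z,w)\vert^2\preceq\Vert s_z\Vert^2_{L^2(\chi dv_g)}$ by the sub-mean-value inequality, writes this norm by duality as $\sup_\sigma\vert\langle\sigma,s_z\rangle_{L^2(\chi dv_g)}\vert=\sup_\sigma\vert P(\chi\sigma)(z)\vert=\sup_\sigma\vert u_\sigma(z)\vert$ where $u_\sigma=\chi\sigma-P(\chi\sigma)$ is the minimal solution of $\bar\partial u=\bar\partial\chi\cdot\sigma$ (here $\chi(z)=0$ is used), and then applies the McNeal--Varolin estimate with the weight $e^{-\epsilon\Phi_z}$, which is of size one near $z$ (where $u_\sigma$ is evaluated) and of size $e^{-\epsilon C_1 d_g(z,w)}$ on the support of the data $\bar\partial\chi$. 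The reproducing property does all the work that your radial mean-value claim was meant to do, and no symmetry of the metric or the fiber weight is needed. If you want to keep your "fix $z_0$ and prove a global weighted bound on $K(\cdot,z_0)$" strategy, you would have to replace the radiality argument by this duality step or by a genuinely rotation-invariant correction of the density $\chi e^{-\Phi}dv_g$, with uniform control of $\bar\partial\chi$ -- which is precisely the delicate point you flagged, and it does not go through as stated.
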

\begin{proof}
Let $z,w\in M$ such that $d_{g}(z,w) \geq \delta$ where $\delta > 0$ as in Proposition 3.4. Fix a smooth function  $\chi\in C^{\infty}_{0}(B_{g}(w,\delta/2))$ such that  \\ (i)$\ 0\leq\chi\le 1$,\\
(ii)$\ \chi=1$ in $B_{g}(w,\delta/4)$,\\
(iii)$\ \vert\bar\partial\chi\vert_{g}\preceq\chi$.\\
 Let $s_{z}\in \mathcal{F}^{2}(M,L)$ defined by
$$
s_{z}(w):=\vert e(z)\vert\sum_{i=1}^{d}\overline{f_{i}(z)}s_{i}(w)
$$
where $(s_{i})_{1\leq i \leq d}$ is an orthonormal basis of $\mathcal{F}^{2}(M,L)$ and $e$ is a a local vframe of $L$ arround $z$. Then  $\vert s_{z}(w)\vert=\vert K(w,z)\vert$ and $\Vert s_{z}\Vert_{2}=\vert K(z,z)\vert\preceq 1$. Also
$$
s_{z}(w)\otimes\overline{e(z)\over\vert e(z)\vert}=K(w,z)
$$
By (3.1) Proposition 3.1
$$
\vert s_{z}(w)\vert^{2} \preceq \int_{B(w,\delta/2)}\chi(\zeta)\vert s_{z}(\zeta)\vert^{2}dv_{g}
\preceq \Vert s_{z}\Vert^{2}_{L^{2}(\chi dv_{g})}
$$
We have  $\Vert s_{z}\Vert_{L^{2}(\chi dV_{g})}=\sup_{\sigma}\vert<\sigma,s_{z}>_{L^{2}(\chi dv_{g})}\vert$ where $\sigma\in\mathcal{F}^{2}(B_{g}(z,\delta),L)$ such that $\Vert \sigma\Vert_{L^{2}(\chi dv_{g})}=1$.
we have
\begin{eqnarray*}
\Big\vert<\sigma,s_{z}>_{L^{2}(\chi dv_{g})}\Big\vert_{\mathbb{C}} &=& \Big\vert\int_{M}<\chi(w)\sigma(w),s_{z}(w)>dv_{g}(w)\Big\vert_{\mathbb{C}}\\
&=& \Big\vert\sum_{i=1}^{d}\int_{M}<\chi(w)\sigma(w),\vert e(z)\vert\overline{f_{i}(z)}s_{i}(w)>dv_{g}(w)\Big\vert_{\mathbb{C}}\\
&=&\Big\vert\sum_{i=1}^{d}\int_{M}<\chi(w)\sigma(w), s_{i}(w)>f_{i}(z)\vert e(z)\vert dv_{g}(w)\Big\vert_{\mathbb{C}}\\
&=&\Big\vert\sum_{i=1}^{d}\int_{M}<\chi(w)\sigma(w), s_{i}(w)>f_{i}(z)e(z)dv_{g}(w)\Big\vert_{L_{z}}\\
&=&\Big\vert\sum_{i=1}^{d}\int_{M}<\chi(w)\sigma(w), s_{i}(w)>s_{i}(z)dv_{g}(w)\Big\vert_{L_{z}}\\
&=&\Big\vert\int_{M} K(z,w)\bullet\chi(w)\sigma(w)dv_{g}(w)\Big\vert_{L_{z}}\\
&=&\vert P(\chi\sigma)(z)\vert_{L_{z}}
\end{eqnarray*}
Since $c(L)+Ricci(g)\ge ag$,
by Theorem 2.11 there exists a solution $u$ of $\bar\partial u=\bar\partial\chi.\sigma$ such that
$$
\int_{M}\vert u\vert^{2}dv_{g}
\preceq\int_{M}\vert\bar\partial\chi\vert_{g}^{2}\vert \sigma\vert^{2}dv_{g} < \infty
$$
Let $u_{\sigma}=\chi\sigma-P(\chi\sigma)$ be  the  solution having  minimal  $L^{2}$-norm of $$\bar\partial u=\bar\partial\chi.\sigma$$ Since $\chi(z)=0$
$$
\Big\vert<\sigma,s_{z}>_{L^{2}(\chi dv_{g})}\Big\vert_{\mathbb{C}}=\vert P(\chi\sigma)(z)\vert_{L_{z}}=\vert u_{\sigma}(z)\vert_{L_{z}}
$$
Since $B(z,\delta/2)\cap B(w,\delta/2)=\emptyset$, the section $u_{\sigma}$ is holomorphic in $B_{g}(z,\delta/2)$. Let $\epsilon \in ]0,2/\delta]$, By (3.1) Proposition 3.1
\begin{equation}
\vert u_{\sigma}(z)\vert^{2}_{L_{z}}\preceq \int_{B_{g}(z,\delta/2)}\vert u_{\sigma}(\zeta)\vert^{2}_{L_{\zeta}}dv_{g}\preceq\int_{B_{g}(z,\delta/2)}e^{-\epsilon d(\zeta,z)}\vert u_{\sigma}(\zeta)\vert^{2}_{L_{\zeta}}dv_{g}
\end{equation}
\noindent Let $\eta:=-\epsilon\Phi_{z}$ where $\Phi_{z}$ is as in lemma 2.8 and $\Theta=\epsilon\omega_{g}$. Choose $\epsilon$ small enough such that
$$
c(L)+Ricci(g)-i\epsilon\partial\bar\partial\Phi_{z}-i\epsilon^{2}\partial\Phi_{z}\wedge\bar\partial\Phi_{z}-\epsilon\omega_{g}\geq 0
$$
By Theorem 2.12
$$
\int_{M}e^{- \epsilon\Phi_{z}}\vert u_{\sigma}\vert^{2}dv_{g}\preceq\int_{M}e^{-\epsilon\Phi_{z}}\vert \bar\partial \chi\vert_{g}^{2}\vert \sigma\vert^{2}dv_{g}
$$
Since  $C_{1}d_{g}(.,z)\leq \Phi_{z} \leq C_{2}(d_{g}(.,z)+1)$, we obtain
$$
\vert u_{\sigma}(z)\vert^{2}_{L_{z}}\preceq\int_{M}e^{-\epsilon C_{1}d_{g}(\zeta,z)} \chi(\zeta)\vert \sigma(\zeta)\vert^{2}dv_{g}
$$
Since $\zeta\in B_{g}(w,\delta) $ we have
\begin{eqnarray*}
d_{g}(\zeta,z)&\geq & d_{g}(z,w)-d_{g}(w,\zeta)\\
&\succeq & d_{g}(z,w)-\delta\succeq  d_{g}(z,w)
\end{eqnarray*}
Finally
$$
\vert K(z,w)\vert\preceq\sup_{\sigma}\vert u_{\sigma}(z)\vert_{L_{z}} \preceq e^{-\alpha d_{g}(z,w)}.
$$
\end{proof}
\subsection{Boundedness of the Bergman Projection on $\mathcal{F}^{p}(M,L)$}
The following poposition is a consequence of Theorem 3.6
\begin{prop} Let $(M,g)$ be a K\"ahler Cartan-Hadamard manifold with bounded geometry. such that
$$\sup_{z\in M}\displaystyle\int_{M}e^{-\beta d_{g}(w,z)}dv_{g} < \infty$$
for all $\beta > 0$.
Let $(L,h)\longrightarrow (M,g)$ be a hermitian holomorphic line bundle with bounded curvature such that $$c(L)+Ricci(g)\geq a\omega_{g}$$
for some positive constant $a$. Let $p\in [1,+\infty]$. Then the Bergman projection is bounded as a map from $L^{p}(M,L)$ to $\mathcal{F}^{p}(M,L)$.
\end{prop}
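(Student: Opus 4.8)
The plan is to realize $P$ as an integral operator dominated by a scalar integral operator whose kernel enjoys the uniform off-diagonal exponential decay of Theorem 3.6, and then to apply Schur's test. Starting from
$$
Ps(z)=\int_{M}K(z,w)\bullet s(w)\,dv_{g}(w),
$$
I would first record the fibrewise pointwise bound $|K(z,w)\bullet s(w)|_{L_{z}}\leq |K(z,w)|\,|s(w)|_{h}$, which follows from Cauchy--Schwarz in the fibres together with the definition of $K(z,w)\bullet s(w)$. Combining this with Theorem 3.6 produces constants $\alpha,C>0$ with
$$
|Ps(z)|_{h}\leq C\int_{M}e^{-\alpha d_{g}(z,w)}|s(w)|_{h}\,dv_{g}(w).
$$
Hence it is enough to prove that the scalar operator $Tf(z)=\int_{M}e^{-\alpha d_{g}(z,w)}f(w)\,dv_{g}(w)$ is bounded on $L^{p}(M,dv_{g})$ for every $p\in[1,+\infty]$, and then apply it to $f=|s|_{h}$ to conclude $\|Ps\|_{p}\leq C\|s\|_{p}$.

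For the boundedness of $T$ I would use the standing hypothesis with $\beta=\alpha$, namely
$$
A:=\sup_{z\in M}\int_{M}e^{-\alpha d_{g}(z,w)}\,dv_{g}(w)<\infty .
$$
Since $d_{g}$ is symmetric, the integral in the other variable obeys the same bound, $\sup_{w\in M}\int_{M}e^{-\alpha d_{g}(z,w)}\,dv_{g}(z)=A$. The endpoint cases are then immediate: for $p=\infty$ one has $|Tf(z)|\leq A\,\|f\|_{\infty}$ pointwise, and for $p=1$ Tonelli's theorem with the $z$-integral bound gives $\|Tf\|_{1}\leq A\,\|f\|_{1}$. For $1<p<\infty$ I would invoke Schur's test with the constant weight $h\equiv 1$: the two Schur conditions reduce precisely to the finiteness of the above two suprema, whence $\|Tf\|_{p}\leq A\,\|f\|_{p}$; equivalently, one interpolates between the $L^{1}$ and $L^{\infty}$ estimates just obtained. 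This delivers $\|Ps\|_{p}\leq CA\,\|s\|_{p}$ for all $p\in[1,+\infty]$.

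Finally, since $K(\cdot,w)$ is a holomorphic section of $L$ for each fixed $w$ and the defining integral converges locally uniformly (by the exponential decay and the $L^{p}$ bound just established), $Ps$ is itself holomorphic, so $Ps\in\mathcal{F}^{p}(M,L)$ and $P:L^{p}(M,L)\to\mathcal{F}^{p}(M,L)$ is bounded. The only substantive input here is Theorem 3.6; once the off-diagonal decay rate $\alpha$ is in hand, the rest is a routine Schur/interpolation argument. The single point to watch is a compatibility one: the rate $\alpha$ produced by Theorem 3.6 must be integrable in the sense required, and this is guaranteed because the hypothesis $\sup_{z}\int_{M}e^{-\beta d_{g}(w,z)}\,dv_{g}<\infty$ is assumed for \emph{all} $\beta>0$, so no mismatch between the decay rate and the integrability hypothesis can arise.
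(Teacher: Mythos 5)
Your proposal is correct and follows essentially the same route as the paper: both reduce matters to the scalar kernel $e^{-\alpha d_{g}(z,w)}$ via Theorem 3.6 and then exploit the hypothesis $\sup_{z}\int_{M}e^{-\alpha d_{g}(z,w)}\,dv_{g}<\infty$ together with symmetry of $d_{g}$ to control the $L^{p}$ norm for all $1\leq p\leq\infty$. The paper phrases the case $1<p<\infty$ as an application of Jensen's inequality with respect to the normalized measure $|K(z,w)|\,dv_{g}(w)$, which is exactly your Schur test with constant weight, so the two arguments coincide in substance.
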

\begin{proof}
 If $p=\infty$, we have
\begin{eqnarray*}
\Vert Ps\Vert_{\infty} &=& \Big\Vert\int_{M}K(z,w).s(w)dv_{g}(w)\Big\Vert_{\infty}\\
&\leq &\Vert s\Vert_{\infty}\sup_{z\in M}\int_{M}\vert K(z,w)\vert dv_{g}(w)\\
&\preceq &\Vert s\Vert_{\infty}\sup_{z\in M}\int_{M}e^{-\alpha d_{g}(z,w)}dv_{g}(w)\\
&\preceq &\Vert s\Vert_{\infty}
\end{eqnarray*}
$P$ is bounded from $L^{\infty}(M,L)$ to $\mathcal{F}^{\infty}(M,L)$.If $ p\in [1,\infty[$,
\begin{eqnarray*}
\int_{M}\vert Ps(z)\vert^{p}dv_{g}(w)&=&\int_{M}\Big\vert\int_{M}K(z,w).s(w)dv_{g}(w)\Big\vert^{p}dv_{g}(z)\\
&\leq &\int_{M}\Big\vert\int_{M}\vert s(w)\vert K(z,w)\vert dv_{g}(w)\Big\vert^{p}dv_{g}(z)\\
 &\leq &\int_{M}\Bigl(\Bigl(\int_{M}\vert K(z,w)\vert dv_{g}(w)\Bigr)^{p-1}\\
&{}&\times\int_{M}\vert s(w)\vert^{p}\vert K(z,w)\vert dv_{g}(w)\Bigr)dv_{g}(z) \hbox{( Jensen inequality)}\\
&\preceq & \int_{M}\Bigl(\int_{M}e^{-\alpha d_{g}(w,z)}dv_{g}(w)\Bigr)^{p-1}\\
&{}&\ \times\int_{M}\vert  s(w)\vert^{p}\vert K(z,w)\vert dv_{g}(w)\Bigr)dv_{g}(z)
\end{eqnarray*}
Thus
\begin{eqnarray*}
\int_{M}\vert Ps(z)\vert^{p}dv_{g}(w)&\preceq &\int_{M}\int_{M}\vert s(w)\vert^{p}e^{-\alpha d_{g}(w,z)}dv_{g}(w)dv_{g}(z)\\
&\preceq & \int_{M}\vert s(w)\vert^{p} dv_{g}(w)
\end{eqnarray*}
and then $P$ is bounded from $L^{p}(M,L)$ to $\mathcal{F}^{p}(M,L)$.
\end{proof}
\section{Boundedness and Compactness for Toeplitz Operators}
Let  $(M,g)$ is a  K\"ahler manifold. Consider the following conditions :\\
(1) $(M,g)$ is a Cartan-Hadamard manifold.\\
(2) $(M,g)$ has bounded geometry,\\
(3) $(L,h)\longrightarrow (M,g)$ is a hermitian holomorphic line bundle with bounded curvature such that
$$
\qquad c(L)+Ricci(g)\geq a\omega_{g}
$$
for some positive constant $a$.\\
(4) For all $\beta > 0$
$$
\sup_{z\in M}\int_{M}e^{-\beta d_{g}(w,z)}dv_{g}(w) < \infty
$$
\begin{rem} Let $(M,g)$ has bounded geometry and $Ricci(g)\geq Kg$. Since
$$
\int_{M}e^{-\beta d_{g}(w,z)}dv_{g}(w)\asymp \int_{0}^{\infty}e^{-\beta r}vol(B_{g}(z,r)) dr
$$
if the volume of $(M,g)$ grows uniformly subexponentially then it saisfies the condition (4). In particular this is true if the volume of $(M,g)$ grows uniformly polynomially.
\end{rem}
 \subsection{Carleson Measures for $\mathcal{F}^{p}(M,L)$}
\begin{defn} A  positive measure $\mu$ on $M$ is  Carleson for $\mathcal{F}^{p}(M,L),\  1\leq p  <\infty,$  if the exists $C_{\mu,p} > 0$ such that
$$
\forall\ s\in\mathcal{F}^{p}(M,L)\ :\ \int_{M}\vert s\vert^{p}d\mu\leq C_{\mu,p}\int_{M}\vert s\vert^{p}dv_{g}
$$
If $p=\infty$, the measure $\mu$ on $M$ is  Carleson for $\mathcal{F}^{\infty}(M,L)$ if there exists $ C, r> 0$ such that $\mu(B(z,r))\leq C$.
\end{defn}
\noindent The following is a geometric characterization of Carleson measures established earlier for  classical Bargman-Fock  space by Ortega Cerda [22] and for generalized Bargman-Fock space by Schuster-Varolin [23].
\begin{thm} Let $(M,g)$ be a K\"ahler manifold which satisfies (1),(2) and (3).
Let $\mu$ be a positive measure on $M$. Let $p\in [1,\infty[$. If $p\not=2$ or $p\not=\infty$, suppose further
$$
\sup_{z\in M}\int_{M}e^{-\beta d_{g}(w,z)}dv_{g}(w) < \infty
$$
for all $\beta > 0$.
The following are equivalent.\\
(a) The measure $\mu$ is Carleson for $\mathcal{F}^{p}(M,L)$.\\
(b) There exists $ r_{0}> 0$ such that $\mu(B_{g}(z,r))\leq C_{r_{0}}$ for any $z\in M$.\\
(c) For each $r > 0$ there exists $C_{r} > 0$ such that $\mu(B_{g}(z,r))\leq C_{r}$ for any $z\in M$.
\end{thm}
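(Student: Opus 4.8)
The plan is to prove the cycle $(a)\Rightarrow(b)\Rightarrow(c)\Rightarrow(a)$; the remaining implication $(c)\Rightarrow(b)$ is then immediate, since $(c)$ holds for every $r$ and in particular for one fixed value $r_0$.

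I would treat $(c)\Rightarrow(a)$ first, as it is the cleanest. Starting from the sub-mean-value inequality (3.1) of Proposition 3.1, namely $|s(z)|^p\leq C_r^p\int_{B_g(z,r)}|s|^pdv_g$ with $C_r$ independent of $z$, I integrate against $d\mu(z)$ and rewrite the ball integral as $\int_M\mathsf{1}_{\{d_g(z,w)<r\}}dv_g(w)$. Applying Tonelli to exchange the order of integration, the inner integral collapses to $\int_M\mathsf{1}_{\{d_g(z,w)<r\}}d\mu(z)=\mu(B_g(w,r))$, which is bounded by the constant $C_r$ of hypothesis $(c)$. This produces $\int_M|s|^pd\mu\leq C_r^{p+1}\int_M|s|^pdv_g$, the Carleson estimate, with no further input required.

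For $(b)\Rightarrow(c)$ I would argue by a uniform covering. Given $r>0$ and $z\in M$, pick a maximal $r_0$-separated family $(x_j)$ inside $B_g(z,r)$; by maximality the balls $B_g(x_j,r_0)$ cover $B_g(z,r)$, whereas the balls $B_g(x_j,r_0/2)$ are pairwise disjoint and contained in $B_g(z,r+r_0/2)$. The uniform ball-size condition of Remark 2.7, which follows from bounded geometry together with the lower Ricci bound, controls the volumes of these balls from below and above, so the cardinality $N$ of $(x_j)$ is bounded independently of $z$ (this is Gromov's Packing Lemma 2.3 in disguise). Hence $\mu(B_g(z,r))\leq\sum_j\mu(B_g(x_j,r_0))\leq N\,C_{r_0}$, which is precisely $(c)$.

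The structurally decisive implication is $(a)\Rightarrow(b)$, and here I would build, for each $z\in M$, a test section normalized in $L^p$ but bounded below on a fixed-size ball about $z$, with all constants uniform in $z$. Choosing $\lambda\in L_z$ with $|\lambda|_{L_z}=1$, Proposition 3.3 — the one-point interpolation, where the extra integrability hypothesis is invoked when $p\neq2$ — yields $s_z\in\mathcal{F}^p(M,L)$ with $s_z(z)=\lambda$ and $\|s_z\|_p\leq C$; combining this upper bound with (3.1) forces $\|s_z\|_p\asymp1$ uniformly. Setting $u_z=s_z/\|s_z\|_p$ gives $\|u_z\|_p=1$ and $|u_z(z)|_h\geq a:=1/C$, so the slow-growth Lemma 3.2 guarantees $|u_z(w)|_h\geq a/2$ throughout $B_g(z,\delta)$. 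Inserting $u_z$ into the Carleson inequality $(a)$ then yields $(a/2)^p\mu(B_g(z,\delta))\leq\int_M|u_z|^pd\mu\leq C_{\mu,p}$, so $\mu(B_g(z,\delta))\leq(2/a)^pC_{\mu,p}$, establishing $(b)$ with $r_0=\delta$. The main obstacle throughout is the uniformity in $z$ of every constant — the interpolation norm bound, the radius $\delta$, and the lower bound $a$ in the slow-growth step; this uniformity is exactly what the $z$-independent formulations of Proposition 3.3 and Lemma 3.2 supply, so the genuine difficulty has already been discharged into those preliminary results.
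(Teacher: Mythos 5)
Your proposal is correct and follows essentially the same route as the paper: the decisive implication $(a)\Rightarrow(b)$ is handled exactly as in the text, via the one-point interpolation of Proposition 3.3 together with the slow-growth Lemma 3.2, and $(b)\Rightarrow(c)$ is the same packing/covering argument. The only cosmetic difference is that you obtain the Carleson estimate from the uniform ball bound by Tonelli (swapping $d\mu(z)$ and $dv_g(w)$ against the symmetric indicator of $\{d_g(z,w)<r\}$) rather than by summing the sub-mean-value inequality over an $r_0$-lattice as the paper does; both are valid and equally elementary.
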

\begin{proof}
\noindent $(c)\Longrightarrow (b)$ is trivial. For $(b)\Longrightarrow (c)$, fix  $r > r_{0}$ and an $r_{0}$-lattice  $(a_{k})$ in $M$. There exists an integer $N$ such that each point $z\in M$ has a neighborhood which intersect at most $N$ of the $B_{k}(a_{k},r_{0})$'s. Hence
$$
\mu(B(z,r))\leq\sum_{k=1}^{N}\mu(B_{g}(a_{k},r_{0}))\leq NC_{r_{0}}
$$
\noindent$(b)\Longrightarrow(a)$. Let $s\in\mathcal{F}^{p}(M,L)$. By $(3.1)$ of proposition 3.1
\begin{eqnarray*}
\int_{B_{g}(a_{k},r_{0}/2)}\vert s\vert^{p}d\mu &\leq &\mu(B(a_{k},r_{0}/2))\sup_{w\in B_{g}(a_{k},r_{0}/2)}\vert s(w)\vert^{p}\\
&\preceq &\sup_{w\in B_{g}(a_{k},r_{0}/2)}\vert s(w)\vert^{2}\\
&\preceq &\int_{B_{g}(a_{k},r_{0})}\vert s(w)\vert^{p}dv_{g}
\end{eqnarray*}
Hence
\begin{eqnarray*}
\int_{M}\vert s\vert^{p}d\mu&\preceq &\sum_{k=1}^{\infty}\int_{B_{g}(a_{k},r_{0})}\vert s\vert^{p}d\mu\\
&\preceq &\sum_{k=1}^{\infty}\int_{B_{g}(a_{k},r_{0})}\vert s\vert^{2}dv_{g}\\
&\preceq &\int_{M}\vert s\vert^{p}dv_{g}
\end{eqnarray*}
\noindent $(a)\Longrightarrow (b)$. Let $z\in M$. By proposition 3.3 there is a section  $s_{z}\in\mathcal{F}^{p}(M,L)$ such that
$$
\vert s_{z}(z)\vert=1\quad\hbox{and}\quad\int_{M}\vert s_{z}\vert^{p}dv_{g}\leq C
$$
for some $C > 0$ independant of $z$. Also by lemma 3.2 there exists $0<\delta <R$ such that
$$
\forall\ w\in B_{g}(z,\delta)\ :\ \vert s_{z}(w)\vert\geq {1\over 2}
$$
Hence
\begin{eqnarray*}
\mu(B_{g}(z,\delta)) &\preceq & \int_{B_{g}(z,\delta)}\vert s_{z}\vert^{p}d\mu\\
&\preceq &\int_{M}\vert s_{z}\vert^{p} d\mu\\
&\preceq &\int_{M}\vert s_{z}\vert^{p}dv_{g}\\
&\preceq & 1\quad\hbox{ ( by Carleson condition)}
\end{eqnarray*}
 \end{proof}
\subsection{Vanishing Carleson Measures for $\mathcal{F}^{2}(M,L)$}
\noindent Recall that a bounded linear operator $T : \mathcal{F}^{2}(M,L)\longrightarrow L^{2}(M,L,d\mu)$ is a compact operator  if for all  sequence $(s_{j})\subset\mathcal{F}^{2}(M,L)$ converging weakly to zero section i.e
$$
\forall\ \sigma\in \mathcal{F}^{2}(M,L)\ :\ \lim_{j\rightarrow\infty}\int_{M}<\sigma(w), s_{j}(w)>dv_{g}=0
$$
we have
$$
\lim_{j\rightarrow\infty}\int_{M}\vert Ts_{j}\vert^{p}d\mu=0
$$
The following lemma is a consequence of proposition 3.1, Montel's Theorem and Alaouglu's Theorem.
\begin{lem}
Let $(s_{j})$ be a sequence in $\mathcal{F}^{2}(M,L)$. The following are equivalent.\\
(a) $(s_{j})$ converges weakly zero.\\
(b) There exists $C > 0$ such that
$$
\sup_{j}\int_{M}\vert s_{j}\vert^{2}dv_{g}\leq C
$$
and for all compact $F\subset M$
$$
\lim_{j\rightarrow\infty}\sup_{z\in F}\vert s_{j}(z)-s(z)\vert=0
$$
\end{lem}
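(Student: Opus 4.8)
The plan is to prove the equivalence $(a)\Longleftrightarrow (b)$ characterizing weak convergence to zero in $\mathcal{F}^{2}(M,L)$, exploiting that a Hilbert-space sequence converges weakly to zero precisely when it is norm-bounded and converges to zero when tested against a dense or total family of functionals. The two tools that do the work are Proposition 3.1 (the pointwise bound $\vert s(z)\vert^{2}\leq C_{r}^{2}\int_{B_{g}(z,r)}\vert s\vert^{2}dv_{g}$, giving continuity of all point evaluations) together with Montel's theorem (normal families for holomorphic sections) and Alaoglu's theorem (bounded sets in a Hilbert space are weakly sequentially precompact).

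For the direction $(a)\Longrightarrow (b)$, I would first invoke the uniform boundedness principle: a weakly convergent sequence is weakly bounded, hence norm-bounded, so $\sup_{j}\Vert s_{j}\Vert_{2}^{2}\leq C$. To get the locally uniform convergence, note that for fixed $z$ the evaluation $ev_{z}:s\mapsto s(z)$ is a bounded functional by (3.1) of Proposition 3.1, so weak convergence gives $s_{j}(z)\to 0$ pointwise; the bound (3.1) applied on a slightly larger ball shows the family $(s_{j})$ is uniformly bounded on compacta, and Montel's theorem upgrades pointwise convergence to uniform convergence on compact sets (with limit $s\equiv 0$). This is the straightforward direction.

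For $(b)\Longrightarrow (a)$, I would fix an arbitrary $\sigma\in\mathcal{F}^{2}(M,L)$ and show $\int_{M}\langle\sigma,s_{j}\rangle\,dv_{g}\to 0$. The idea is a split-the-integral argument: given $\epsilon>0$, the uniform norm bound on $(s_{j})$ together with the square-integrability of $\sigma$ lets me choose a compact set $F\subset M$ so that the contribution from $M\setminus F$ is small uniformly in $j$ (Cauchy--Schwarz plus tail smallness of $\Vert\sigma\Vert_{L^{2}(M\setminus F)}$); on the compact piece $F$, the hypothesis $\sup_{z\in F}\vert s_{j}(z)\vert\to 0$ forces the integral over $F$ to zero. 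Hence the full inner product tends to zero, giving weak convergence.

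I expect the main obstacle to be the careful handling of the tail estimate in $(b)\Longrightarrow (a)$: one must control $\int_{M\setminus F}\vert\langle\sigma,s_{j}\rangle\vert\,dv_{g}$ uniformly in $j$ using only the uniform $L^{2}$-bound on $(s_{j})$ and the integrability of $\sigma$, choosing $F$ large enough that $\int_{M\setminus F}\vert\sigma\vert^{2}dv_{g}<\epsilon$. Alaoglu's theorem enters as a safeguard: any subsequence of $(s_{j})$ has a weakly convergent sub-subsequence, and the pointwise limit identified via Montel must be the zero section, so the whole sequence converges weakly to zero; this subsequence argument is what rigorously ties the locally uniform convergence in $(b)$ back to genuine weak convergence and avoids assuming a priori that a weak limit exists.
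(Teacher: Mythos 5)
Your proposal is correct, and it matches the paper's intended argument: the paper in fact gives no written proof of this lemma, stating only that it is ``a consequence of Proposition 3.1, Montel's Theorem and Alaoglu's Theorem,'' which are precisely the three ingredients you deploy (boundedness of point evaluations via the estimate $(3.1)$, normal families for the locally uniform convergence in $(a)\Rightarrow(b)$, and either your direct tail-splitting estimate or the Alaoglu subsequence argument for $(b)\Rightarrow(a)$). The only cosmetic point is that the $s(z)$ appearing in condition $(b)$ of the statement is a typo for the zero section, which you have correctly read as $\lim_{j}\sup_{z\in F}\vert s_{j}(z)\vert=0$.
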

\begin{defn} A positive measure $\mu$ on $M$ is a vanishing Carleson if the inclusion $\imath_{\mu} : \mathcal{F}^{2}(M,L)\longrightarrow L^{2}(M,L,\mu)$ is a compact operator.
\end{defn}
\begin{thm} Let $(M,g)$ be a K\"ahler manifold which satisfies (1),(2) and (3). Let $\mu$ be a positive measure on $M$. Then the following are equivalent.\\
(a) The measure $\mu$ is a vanishing Carleson for $\mathcal{F}^{2}(M,L)$.\\
(b) For every $\epsilon > 0$ there exists $r > 0$ such that $\mu(B_{g}(z,R))\leq\epsilon$ for any $z\in M\setminus B_{g}(z_{0},r)$, where $z_{0}\in M$  fixed.
\end{thm}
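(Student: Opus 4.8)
The plan is to prove the two implications $(a)\Rightarrow(b)$ and $(b)\Rightarrow(a)$ separately, both resting on a preliminary normalization of the Bergman function. First I would record that $\vert K(z,z)\vert\asymp 1$ uniformly in $z$: the upper bound is Proposition 3.4, while the lower bound comes from the reproducing inequality $\vert s(z)\vert\leq\Vert s\Vert_{2}\sqrt{\vert K(z,z)\vert}$ applied to the interpolating section of Proposition 3.3 (which satisfies $s(z)=\lambda$ and $\Vert s\Vert_{2}\leq C\vert\lambda\vert$), forcing $\vert K(z,z)\vert\geq C^{-2}$. I would then introduce, for each $z\in M$, the normalized test section $\hat s_{z}:=s_{z}/\sqrt{\vert K(z,z)\vert}$, where $s_{z}$ is the section constructed in Proposition 3.5; it has $\Vert\hat s_{z}\Vert_{2}=1$ and $\vert\hat s_{z}(w)\vert=\vert K(w,z)\vert/\sqrt{\vert K(z,z)\vert}$.

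For $(a)\Rightarrow(b)$ I would argue by contraposition. If (b) fails, there are $\epsilon_{0}>0$ and a sequence $(z_{j})$ with $d_{g}(z_{j},z_{0})\to\infty$ and $\mu(B_{g}(z_{j},R))\geq\epsilon_{0}$. Using Gromov's packing Lemma 2.4 to cover $B_{g}(z_{j},R)$ by a bounded number $N$ of balls of the radius $\delta$ of Proposition 3.5, I may replace $z_{j}$ by a center $y_{j}\in B_{g}(z_{j},R)$ with $\mu(B_{g}(y_{j},\delta))\geq\epsilon_{0}/N$ and $d_{g}(y_{j},z_{0})\to\infty$. The sections $\hat s_{y_{j}}$ are of unit norm, and by the off-diagonal decay of Theorem 3.6 together with the uniform lower bound on the diagonal, $\sup_{w\in F}\vert\hat s_{y_{j}}(w)\vert\preceq\sup_{w\in F}e^{-\alpha d_{g}(w,y_{j})}\to 0$ on every compact $F$; hence $\hat s_{y_{j}}\to 0$ weakly by Lemma 4.4. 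On the other hand Proposition 3.5 gives $\vert\hat s_{y_{j}}(w)\vert^{2}\geq C_{1}^{2}\vert K(y_{j},y_{j})\vert\succeq 1$ for $w\in B_{g}(y_{j},\delta)$, so
$$
\int_{M}\vert\hat s_{y_{j}}\vert^{2}\,d\mu\;\geq\;\int_{B_{g}(y_{j},\delta)}\vert\hat s_{y_{j}}\vert^{2}\,d\mu\;\succeq\;\mu(B_{g}(y_{j},\delta))\;\geq\;\frac{\epsilon_{0}}{N}>0 .
$$
This contradicts compactness of $\imath_{\mu}$, which would force $\int_{M}\vert\hat s_{y_{j}}\vert^{2}d\mu\to 0$; thus (a) implies (b).

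For $(b)\Rightarrow(a)$ I would first note that (b) makes $z\mapsto\mu(B_{g}(z,R))$ bounded (it tends to $0$ at infinity and is finite on the compact $\overline{B_{g}(z_{0},r)}$ since $\mu$ is locally finite), so $\mu$ is Carleson by Theorem 4.3 and $\imath_{\mu}$ is bounded; in particular $\mu$ is finite on bounded sets. To prove compactness, take $s_{j}\to 0$ weakly; by Lemma 4.4 one has $\sup_{j}\Vert s_{j}\Vert_{2}\leq C$ and $s_{j}\to 0$ uniformly on compacts. Given $\epsilon>0$, choose $r$ from (b) and split $\int_{M}\vert s_{j}\vert^{2}d\mu$ over $B_{g}(z_{0},r)$ and its complement. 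On the complement I would fix a $\delta$-lattice $(a_{k})$, apply the submean estimate $\sup_{B_{g}(a_{k},\delta/2)}\vert s_{j}\vert^{2}\preceq\int_{B_{g}(a_{k},\delta)}\vert s_{j}\vert^{2}dv_{g}$ of Proposition 3.1, and use (b) to bound $\mu(B_{g}(a_{k},\delta/2))\leq\epsilon$ for centers lying far from $z_{0}$; the finite overlap of the lattice then gives $\int_{M\setminus B_{g}(z_{0},r)}\vert s_{j}\vert^{2}d\mu\preceq\epsilon\Vert s_{j}\Vert_{2}^{2}\leq C\epsilon$. On the bounded piece, $\mu(B_{g}(z_{0},r))<\infty$ and $\sup_{B_{g}(z_{0},r)}\vert s_{j}\vert\to 0$, so $\int_{B_{g}(z_{0},r)}\vert s_{j}\vert^{2}d\mu\to 0$. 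Letting $j\to\infty$ and then $\epsilon\to 0$ yields $\int_{M}\vert s_{j}\vert^{2}d\mu\to 0$, i.e. $\imath_{\mu}$ is compact.

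The step I expect to be the main obstacle is the direction $(a)\Rightarrow(b)$: establishing the uniform two-sided bound $\vert K(z,z)\vert\asymp 1$, so that the test sections $\hat s_{y_{j}}$ are genuine non-degenerate concentrations, and, secondarily, reconciling the radius $R$ of (b) with the radius $\delta$ of Proposition 3.5 — both handled through the reproducing/interpolation pair and the packing Lemma 2.4. The implication $(b)\Rightarrow(a)$ is essentially a vanishing refinement of the Carleson estimate of Theorem 4.3 and should be routine once the splitting is arranged.
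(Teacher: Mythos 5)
Your proof is correct and follows essentially the same route as the paper: normalized kernel sections $\hat s_{z}$ with $\vert K(z,z)\vert\asymp 1$, off-diagonal decay plus Lemma 4.4 to get weak nullity, the near-diagonal lower bound of Proposition 3.5 to extract $\mu(B_{g}(\cdot,\delta))$ from $\int\vert\hat s_{z}\vert^{2}d\mu$, and a lattice/covering argument to reconcile radii. The only differences are cosmetic — you argue $(a)\Rightarrow(b)$ by contraposition where the paper argues directly along a sequence, and in $(b)\Rightarrow(a)$ you use a finite-overlap lattice where the paper uses Fubini to produce $\mu(B_{g}(w,1))$ — neither of which changes the substance.
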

\begin{proof}
\noindent $(b)\Longrightarrow (a)$ Let $s\in\mathcal{F}^{2}(M,L)$. By proposition $(2.1)$
$$
\vert s(z)\vert\preceq \int_{M}\hbox{\bf 1}_{B_{g}(z,1)}\vert s\vert^{2}dv_{g}
$$
Hence
\begin{eqnarray*}
\int_{M}\vert s(z)\vert^{2}d\mu &\preceq &\int_{M}\int_{M}\hbox{\bf 1}_{B_{g}(z,1)}\vert s(w)\vert^{2}dv_{g}(w) d\mu(z)\\
&=&\int_{M}\vert s(w)\vert^{2}\mu(B(z,1))dv_{g}(w)
\end{eqnarray*}
Let $(s_{j})\subset\mathcal{F}^{2}(M;L)$ be a sequence converging weakly to zero. By lemma 3.4 $(s_{j})$ is bounded by $C$ on $\mathcal{F}^{2}(M,L)$ and converge to zero locally uniformly in $M$. Let $\epsilon > 0$ and $r > 0$ such that $\mu(B_{g}(z,1))<\epsilon$ for $z\in M\setminus B_{g}(z_{0},r)$. For $j$ large enough
\begin{eqnarray*}
\int_{M}\vert s_{j}\vert^{2}d\mu &\preceq &\int_{B_{g}(z_{0},r)}\vert s_{j}(z)\vert^{2}\mu(B_{g}(z,1))dv_{g}(z)\\
&{}& +\epsilon\int_{M\setminus B_{g}(z_{0},r)}\vert s_{j}(z)\vert^{2}\mu(B_{g}(z,1))dv_{g}(z)\\
&\preceq &\int_{B_{g}(z_{0},r)}\vert s_{j}(z)\vert^{2}\mu(B_{g}(z,1))dv_{g}(z)+C\epsilon \\
&\preceq & 2C\epsilon
\end{eqnarray*}
Thus $\mu$ is a vanishing  Carleson measure.\\
\noindent $(a)\Longrightarrow (b)$ Let $(z_{j})\subset M$ such that $d_{g}(z_{j},z_{0})\longrightarrow\infty$. For each $j$, let  $s_{j}\in \mathcal{F}^{2}(M,L)$  such that
$$
\vert s_{j}(w)\vert=\vert K(w,z_{j})\vert\ \hbox{and}\ \Vert s_{j}\Vert_{2}\asymp 1
$$
Then $s_{j}\longrightarrow 0$ locally uniformly in $M$. Since $\mu$ is vanishing Carleson
$$
\lim_{j\rightarrow\infty}\int_{M}\vert s_{j}\vert^{2}d\mu=0
$$
By proposition 3.5 there exist positive constants $C_{1},C_{2}$ and $\delta$ such that
$$\vert K(z,w)\vert\geq C_{1}\vert K(z,z)\vert\geq C_{2}$$
for all $w\in B_{g}(z,\delta)$. Then
\begin{eqnarray*}
\int_{M}\vert s_{j}\vert^{2} d\mu &\geq &\int_{B_{g}(z_{j},\delta)}\vert s_{z_{j}}\vert^{2}d\mu\\
&= & \int_{B_{g}(z_{j},\delta)}\vert K(z,z_{j})\vert^{2}d\mu\\
&\succeq & \mu(B_{g}(z_{j},\delta))\vert K(z_{j},z_{j})\vert^{2}\\
&\succeq & \mu(B_{g}(z_{j},\delta))
\end{eqnarray*}
since $\vert K(z_{j},z_{j})\vert\asymp 1$ uniformly in $j$. Hence
$$\lim_{j\rightarrow\infty}\mu(B_{g}(z_{j},\delta))=0$$  Since $B_{g}(z_{j},1)$ is covered by $N$ balls $B_{g}(a_{k_{1}},\delta),\cdots, B_{g}(a_{k_{N}},\delta)$ ( $\delta$-lattice ), it follows that
$$
\lim_{j\rightarrow\infty}\mu(B_{g}(z_{j},1))=0
$$
\end{proof}
\subsection{Berezin Transforms of Carleson Measures}
Let $\mu$ be a positive meaure on $M$. The Berezin transform of $\mu$ is the function $\tilde{\mu} : M\rightarrow \mathbb{R}^{+}$ defined by
$$
\tilde{\mu}(z):=\int_{M}\vert k_{z}(w)\vert^{2}d\mu(w)
$$
where
$$
k_{z}(w):={K(w,z)\over\sqrt{\vert K(z,z)\vert}}
$$
\begin{thm} Let $(M,g)$ be a K\"ahler manifold satisfying the conditions (1),(2) and (3). Let $\mu$ be a positive measure on $M$. Let
$p\in [1,\infty]$. If $p\not=2$ or $p\not=\infty$ suppose further
$$
\sup_{z\in M}\int_{M}e^{-\beta d_{g}(w,z)}dv_{g}(w) < \infty
$$
for all $\beta > 0$.
The following are equivalent.\\
(a) $\mu$ is Carleson for $\mathcal{F}^{p}(M,L)$.\\
(b) $\tilde{\mu}$ is bounded on $M$.
\end{thm}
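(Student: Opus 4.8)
The plan is to establish the equivalence by proving both implications, relying on the geometric characterization of Carleson measures from Theorem 4.3 together with the pointwise estimates for the Bergman kernel obtained in Propositions 3.4 and 3.5. The key observation is that the Berezin transform $\tilde{\mu}(z)$ can be sandwiched between the local mass $\mu(B_g(z,\delta))$ and a weighted average of such local masses, both controlled by the off-diagonal decay from Theorem 3.6.

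First I would prove $(a)\Longrightarrow(b)$. By Theorem 4.3, the Carleson condition is equivalent to the existence of $C_r>0$ with $\mu(B_g(z,r))\leq C_r$ for every $z\in M$. To bound $\tilde{\mu}(z)$, I would write
$$
\tilde{\mu}(z)=\frac{1}{\vert K(z,z)\vert}\int_M \vert K(w,z)\vert^2\,d\mu(w)
$$
and split the integral over the annular decomposition $M=\bigcup_{j\geq 0}\bigl(B_g(z,(j+1)\delta)\setminus B_g(z,j\delta)\bigr)$. On each annulus the off-diagonal decay of Theorem 3.6 gives $\vert K(w,z)\vert\leq C e^{-\alpha j\delta}$, while the mass $\mu(B_g(z,(j+1)\delta))$ is controlled using the uniformly locally finite covering from Gromov's Packing Lemma (Lemma 2.4): $B_g(z,(j+1)\delta)$ is covered by at most $N_j$ balls of radius $\delta$, with $N_j$ growing at most like $\mathrm{vol}_g(B_g(z,(j+1)\delta))$, which by Remark 2.7 grows at most like $r^{\alpha}e^{\beta r}$. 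Since $\vert K(z,z)\vert\succeq 1$ uniformly (Proposition 3.4, together with the lower bound $\vert K(z,z)\vert\asymp 1$ used throughout), one obtains
$$
\tilde{\mu}(z)\preceq C_\delta\sum_{j\geq 0} e^{-2\alpha j\delta}\,\mathrm{vol}_g(B_g(z,(j+1)\delta)),
$$
and the exponential decay dominates the subexponential volume growth, so the sum converges uniformly in $z$. Hence $\tilde{\mu}$ is bounded.

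For $(b)\Longrightarrow(a)$, I would use the near-diagonal lower bound of Proposition 3.5: there exist $\delta, C_1>0$ such that $\vert K(w,z)\vert\geq C_1\vert K(z,z)\vert$ for all $w\in B_g(z,\delta)$. Then
$$
\tilde{\mu}(z)=\frac{1}{\vert K(z,z)\vert}\int_M\vert K(w,z)\vert^2\,d\mu(w)\geq \frac{C_1^2}{\vert K(z,z)\vert}\,\vert K(z,z)\vert^2\,\mu(B_g(z,\delta))\succeq \mu(B_g(z,\delta)),
$$
again using $\vert K(z,z)\vert\asymp 1$. Thus if $\tilde{\mu}$ is bounded by $C$, then $\mu(B_g(z,\delta))\preceq C$ uniformly in $z$, which is condition (b) of Theorem 4.3, and therefore $\mu$ is Carleson for $\mathcal{F}^p(M,L)$ for every $p\in[1,\infty]$.

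The main obstacle is the $(a)\Longrightarrow(b)$ direction, specifically verifying that the annular sum converges uniformly in $z$: one must balance the exponential off-diagonal decay $e^{-2\alpha j\delta}$ against the at most polynomial-times-exponential volume growth $\mathrm{vol}_g(B_g(z,(j+1)\delta))\leq C(j\delta)^{\alpha}e^{\beta j\delta}$, and this requires the decay rate $2\alpha$ to beat the growth rate $\beta$. Here the hypothesis of uniformly subexponential volume growth (or equivalently condition (4), by Remark 4.1) is exactly what guarantees that the series converges with a bound independent of $z$; I would therefore isolate this summability estimate as the technical heart of the argument, expressing it in the integral form $\int_M e^{-\beta d_g(w,z)}\,dv_g(w)<\infty$ uniformly in $z$ and invoking it directly.
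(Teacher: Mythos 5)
Your proof is correct, and the direction $(b)\Longrightarrow(a)$ coincides with the paper's argument (near-diagonal lower bound of Proposition 3.5 plus $\vert K(z,z)\vert\asymp 1$ gives $\tilde{\mu}(z)\succeq\mu(B_g(z,\delta))$, then Theorem 4.3). For $(a)\Longrightarrow(b)$, however, you take a genuinely different route. The paper tests the Carleson inequality against the normalized reproducing kernel: it writes $\tilde{\mu}(z)\asymp\int_M\vert s_z\vert^2\,d\mu$ with $\vert s_z(w)\vert=\vert K(w,z)\vert$, splits over a lattice, applies H\"older on each ball to reduce to $\bigl(\int_M\vert s_z\vert^p\,d\mu\bigr)^{1/p}$, and then invokes the Carleson inequality for $\mathcal{F}^p$ together with the uniform bound $\Vert s_z\Vert_p\preceq 1$. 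You instead never use the Carleson inequality itself in this direction: you pass immediately to its geometric equivalent $\mu(B_g(z,r))\le C_r$ via Theorem 4.3 and then control $\tilde{\mu}(z)$ by an annular (equivalently, lattice) decomposition, balancing the off-diagonal decay $e^{-2\alpha d_g(w,z)}$ of Theorem 3.6 against the subexponential growth of the number of $\delta$-balls needed to cover $B_g(z,(j+1)\delta)$. This is exactly the computation the paper itself performs in Lemma 5.6, $(c)\Longrightarrow(a)$, so it is consistent with the paper's toolbox; it is arguably cleaner, is manifestly independent of $p$, and in fact proves the stronger statement that the ball condition alone implies boundedness of $\tilde{\mu}$. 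The one trade-off to flag: your summability step genuinely requires condition (4) (uniformly subexponential volume growth) for \emph{every} $p$, including $p=2$ and $p=\infty$, since conditions (1)--(3) alone only give $\mathrm{vol}_g(B_g(z,r))\preceq r^{\alpha}e^{\beta r}$ with a $\beta$ that need not be beaten by the kernel's decay rate $2\alpha$; the paper's argument for $p=2$ avoids this by the direct chain $\tilde{\mu}(z)\preceq\int_M\vert s_z\vert^2\,d\mu\preceq\Vert s_z\Vert_2^2=\vert K(z,z)\vert\preceq 1$. Under the theorem's (garbled) hypothesis as literally written condition (4) is always available, and it holds in all the main theorems of the paper, so this is a caveat rather than an error, but you should state explicitly that your proof of $(a)\Longrightarrow(b)$ uses it in all cases.
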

\begin{proof}
\noindent$(a)\Longrightarrow(b)$ For $z\in M$ let $s_{z}\in\mathcal{F}^{2}(M,L)$ such that $\vert s_{z}(w)\vert=\vert K(z,w)\vert$. By off-diagonal estimate $\vert s_{z}(w)\vert\leq Ce^{-\alpha d_{g}(z,w)}\preceq 1$.
Let $(a_{i})$ be a lattice  of $M$.  Since $\mu$ is Carleson by Theorem 4.1  $\mu(B_{g}(a_{j},r))\leq C$. We have
\begin{eqnarray*}
\tilde{\mu}(z)&=&{1\over\sqrt{\vert K(z,z)\vert}}\int_{M}\vert s_{z}\vert^{2}d\mu(w)\\
&\leq &\sum_{j}\int_{B_{g}(a_{j},r)}\vert s_{z}\vert^{2}d\mu(w)\ \hbox{( since $\vert K(z,z)\vert\asymp 1$)}\\
&\leq &\sum_{j}\Bigl(\int_{B_{g}(a_{j},r)}\vert s_{z}\vert^{p}d\mu(w)\Bigr)^{1\over p}\Bigl(\int_{B_{g}(a_{j},r)}\vert s_{z}\vert^{q}d\mu(w)\Bigr)^{1\over q}\\
&\leq &\sum_{j}\Bigl(\int_{B_{g}(a_{j},r)}\vert s_{z}\vert^{p}d\mu(w)\Bigr)^{1\over p}\mu(B_{g}(a_{j},r))^{1\over q}\sup_{B_{g}(a_{j},r)}\vert s_{z}(w)\vert\\
&\preceq &\Bigl(\int_{M}\vert s_{z}\vert^{p}d\mu(w)\Bigr)^{1\over p}\\
&\preceq & \Bigl(\int_{M}\vert s_{z}\vert^{p}dv_{g}(w)\Bigr)^{1\over p}\quad \hbox{($\mu$ is Carleson for $\mathcal{F}^{p}(M,L)$)}\\
&\preceq &\Bigl(\int_{M}\vert s_{z}\vert^{p}dv_{g}(w)\Bigr)^{1\over p}\\
&\preceq & 1
\end{eqnarray*}
  Hence if $\mu$ is a Carleson  then $\tilde{\mu}$ is uniformly bounded. \\
\noindent$(b)\Longrightarrow(a)$ Suppose that $\tilde{\mu}$ is bounded on $M$. Then there exists $C > 0$ such that for all $\delta > 0$ and $z\in M$
$$
\int_{B_{g}(z,\delta)}\vert k_{z}(w)\vert^{2}d\mu(w)\leq\tilde{\mu}(z)\leq C
$$
By diagonal estimates for the Bergman Kernel there exists $C_{1},\delta > 0$ independent of $z$ such that for all $w\in B_{g}(z,\delta)$
$$
\vert K(z,w)\vert\geq C_{1}\vert K(z,z)\vert
$$
Since $\vert K(z,z)\vert\asymp 1$
$$
\vert k_{z}(w)\vert^{2} \succeq 1,\ \forall\ w\in B_{g}(z,\delta)
$$
Hence
$$
\mu(B_{g}(z,\delta))\preceq 1\quad\hbox{uniformly for $z\in M$}
$$
and by Theorem 4.3 $\mu$ is Carleson for $\mathcal{F}^{p}(M,L)$.
\end{proof}
\subsection{Berezin Transforms of Vanishing Carleson Measures}
\begin{thm}
Let $(M,g)$ be a K\"ahler manifold satisfying the conditions (1),(2) and (3). Let $\mu$ be a positive measure on $M$. The following are equivalent.\\
(a) $\mu$ is vanishing Carleson for $\mathcal{F}^{2}(M,L)$.\\
(b) $\displaystyle\lim_{d_{g}(z,z_{0})\rightarrow \infty}\tilde{\mu}(z)=0$.
\end{thm}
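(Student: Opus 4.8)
The plan is to follow the template of Theorem 4.5, replacing its uniform bounds by decay-at-infinity statements, and to realize the Berezin transform as the squared $L^{2}(\mu)$-norm of the normalized reproducing kernel. Write $k_{z}(w)=K(w,z)/\sqrt{\vert K(z,z)\vert}$ and note that $\tilde\mu(z)=\int_{M}\vert k_{z}\vert^{2}\,d\mu=\Vert\imath_{\mu}k_{z}\Vert_{L^{2}(M,L,\mu)}^{2}$, where $\imath_{\mu}:\mathcal F^{2}(M,L)\to L^{2}(M,L,\mu)$ is the inclusion. A direct computation using $\int_{M}\vert K(w,z)\vert^{2}\,dv_{g}=\vert K(z,z)\vert$ gives $\Vert k_{z}\Vert_{2}=1$ for every $z$, so the family $(k_{z})$ is bounded in $\mathcal F^{2}(M,L)$.

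For $(a)\Longrightarrow(b)$, the crux is that $k_{z}\to 0$ weakly as $d_{g}(z,z_{0})\to\infty$. I would fix an arbitrary sequence $(z_{j})$ with $d_{g}(z_{j},z_{0})\to\infty$. For any compact $F\subset M$ and $w\in F$, the off-diagonal decay of Theorem 3.6 gives $\vert K(w,z_{j})\vert\le Ce^{-\alpha d_{g}(w,z_{j})}$, while $d_{g}(w,z_{j})\ge d_{g}(z_{j},z_{0})-\sup_{w\in F}d_{g}(w,z_{0})\to\infty$; combined with $\vert K(z_{j},z_{j})\vert\asymp 1$ (Propositions 3.3 and 3.4) this forces $\sup_{w\in F}\vert k_{z_{j}}(w)\vert\to 0$. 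Since $\Vert k_{z_{j}}\Vert_{2}=1$ is bounded, Lemma 4.4 yields $k_{z_{j}}\to 0$ weakly. As $\mu$ is vanishing Carleson, $\imath_{\mu}$ is compact, so it sends this weakly null sequence to a norm-null one, i.e. $\tilde\mu(z_{j})=\Vert\imath_{\mu}k_{z_{j}}\Vert_{L^{2}(\mu)}^{2}\to 0$. Since the sequence was arbitrary, $\lim_{d_{g}(z,z_{0})\to\infty}\tilde\mu(z)=0$.

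For $(b)\Longrightarrow(a)$, I would use the near-diagonal lower bound. By Proposition 3.5 there are $C_{1},\delta>0$ with $\vert K(z,w)\vert\ge C_{1}\vert K(z,z)\vert$ for $w\in B_{g}(z,\delta)$, and since $\vert K(z,z)\vert\asymp 1$ this gives $\vert k_{z}(w)\vert^{2}\succeq 1$ on $B_{g}(z,\delta)$. Restricting the defining integral to this ball yields $\mu(B_{g}(z,\delta))\preceq\tilde\mu(z)$, so the hypothesis forces $\mu(B_{g}(z,\delta))\to 0$ as $d_{g}(z,z_{0})\to\infty$. Finally, covering each ball $B_{g}(z,1)$ by a fixed number $N$ of $\delta$-balls $B_{g}(a_{k},\delta)$ coming from a $\delta$-lattice (as in the proof of Theorem 4.6), whose centres $a_{k}$ satisfy $d_{g}(a_{k},z_{0})\ge d_{g}(z,z_{0})-1$ and hence also escape to infinity, upgrades this to $\mu(B_{g}(z,1))\to 0$. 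This is exactly condition (b) of Theorem 4.6, whence $\mu$ is vanishing Carleson.

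I expect the main obstacle to be the weak null convergence of the normalized kernels in $(a)\Longrightarrow(b)$: it is precisely here that the off-diagonal exponential decay of Theorem 3.6 is indispensable, since it is what forces $k_{z}$ to spread away from any fixed compact set as $z\to\infty$, while the two-sided diagonal comparison $\vert K(z,z)\vert\asymp 1$ keeps the normalization under control. Once weak nullity is secured, both implications are routine, the reverse one reducing to the lower bound of Proposition 3.5 together with the lattice-covering bookkeeping already employed for Theorem 4.6.
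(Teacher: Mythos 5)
Your argument coincides with the paper's: for $(a)\Rightarrow(b)$ both take the normalized kernels $k_{z_j}$, use the off-diagonal decay of Theorem 3.6 together with $\vert K(z_j,z_j)\vert\asymp 1$ to get locally uniform convergence to zero, invoke the weak-convergence lemma, and then apply compactness of $\imath_{\mu}$; for $(b)\Rightarrow(a)$ both extract $\mu(B_{g}(z,\delta))\preceq\tilde\mu(z)$ from the near-diagonal lower bound and finish with the lattice covering and Theorem 4.6. The proposal is correct and essentially identical to the paper's proof.
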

\begin{proof}
\noindent$(a)\Longrightarrow(b)$ Let $(z_{n})\in M$ such that $\lim_{n\rightarrow\infty}d_{g}(z_{n},z_{0})=\infty$. For $n\in\mathbb{N}$ let $s_{n}\in\mathcal{F}^{2}(M,L)$ such that $\vert s_{n}(w)\vert=\vert K(w,z_{n})\vert$. Put
$$
\tilde{s}_{n}(w)={s_{n}(w)\over\sqrt{\vert K(z_{n},z_{n})\vert}}
$$
Then $\tilde{s}_{n}\in \mathcal{F}^{2}(M,L)$. Since $\vert K(z_{n},z_{n})\vert \asymp 1$ uniformly in $n$ and
$$
\vert\tilde{s}_{n}(w)\vert\leq Ce^{-\alpha d_{g}(w,z_{n})}
$$
then $\displaystyle\lim_{n\rightarrow\infty}\tilde{s}_{n}(w)=0$ and
$$
\int_{M}\vert\tilde{s}_{n}\vert^{2}dv_{g}(w)=1
$$
So $\tilde{s}_{n}\rightarrow 0$ uniformly on compacts of $M$. By lemma 4.4 $\tilde{s}_{n}\rightarrow 0$ weakly on $\mathcal{F}^{2}(M,L)$. Since $\mu$ is vanishing Carleson
$$
\lim_{n\rightarrow\infty}\tilde{\mu}(z_{n})=\lim_{n\rightarrow\infty}\int_{M}\vert\tilde{s}_{n}(w)\vert^{2}d\mu(w)=0
$$
\noindent$(b)\Longrightarrow(a)$ Following the proof of $(b)\Longrightarrow(a)$ in Theorem 4.3 we have
$$
\mu(B(z,r))\preceq \tilde{\mu}(z)
$$
Hence
$$
\lim_{d_{g}(z,z_{0})\rightarrow\infty}\mu(B(z,r))\preceq \lim_{d_{g}(z,z_{0})\rightarrow\infty}\tilde{\mu}(z)=0
$$
By Theorem 4.6 $\mu$ is vanishing Carleson.
\end{proof}
\subsection{Proof ot Theorem 1.2}
$(b)\Longleftrightarrow(c)$ follows from Theorem 4.7\\
$(b)\Longleftrightarrow(d)$ follows from Theorem 4.3.\\
$(b)\Longleftrightarrow(a)$ Suppose that $\mu$ is a Carleson measure. Fix $p\in ]1,\infty[$. Let $s\in \mathcal{F}^{p}(M,L)$. Then
$$
\int_{M}\Big\vert\int_{M}<s(w),K(w,z> d\mu(w) \Big\vert^{p}dv_{g}(z)\qquad\qquad\qquad\qquad\qquad\qquad\quad{}$$
\begin{eqnarray*}
{}&\leq &\int_{M}\Bigl(\int_{M}\vert s\vert\vert K(w,z)\vert  d\mu(w) \Bigr)^{p}dv_{g}(z)\\
{}&\leq &\int_{M}\Bigl(\int_{M}\vert s\vert\vert K(w,z)\vert^{1\over p}\vert K(w,z)\vert^{1\over q} d\mu(w) \Bigr)^{p}dv_{g}(z){}\\
{}&\leq & \int_{M}\Bigl(\int_{M}\vert s(w)\vert^{p}\vert K(w,z)\vert  d\mu(w) \Bigr)\Bigl(\int_{M}\vert K(z,w)\vert d\mu(w)\Bigr)^{p-1}dv_{g}(z)
\end{eqnarray*}
Let $s_{z}\in\mathcal{F}^{2}(M,L)$ such that $\vert s_{z}(w)\vert=\vert K(w,z)\vert$. Then
\begin{eqnarray*}
\int_{M}\vert K(w,z)\vert d\mu(w)&=&
\int_{M}\vert s_{z}(w)\vert d\mu(w)\\
&\preceq & \int_{M}\vert s_{z}(w)\vert dv_{g}(w)\ \hbox{($\mu$ is Carleson for $\mathcal{F}^{1}(M,L)$)}\\
&=& \int_{M}\vert K(w,z)\vert dv_{g}(w)\\
&\leq & C\int_{M}e^{-\alpha d_{g}(w,z)}dv_{g}(w)\preceq 1
\end{eqnarray*}
and
$$
\int_{M}\Bigl(\int_{M}\vert s(w)\vert^{2}\vert K(w,z)\vert d\mu(w)\Bigr)dv_{g}(z)
\qquad\qquad\qquad\qquad\qquad\quad{}$$
\begin{eqnarray*}
{}&\leq &\int_{M}\vert s(w)\vert^{p}\Bigl(\int_{M}\vert K(w,z)\vert dv_{g}(z)\Bigr)d\mu(w)\\
{}&\preceq &\int_{M}\vert s\vert^{p}d\mu(w)\ \hbox{(by off-diagonal estimate)}\\
{}&\preceq & \int_{M}\vert s\vert^{p} dv_{g}\ \hbox{($\mu$ is Carleson for $\mathcal{F}^{p}(M,L)$)}
\end{eqnarray*}
Hence
$$
\int_{M}\vert T_{\mu}s(w)\vert^{p}dv_{g}(z)\leq C_{\mu}\int_{M}\vert s\vert^{p} dv_{g}
$$
If $f\in\mathcal{F}^{1}(M,L)$ then \\
$\displaystyle\int_{M}\Big\vert\int_{M}<s(w),K(w,z> d\mu(w)\Big\vert dv_{g}(z)$\\
${}\qquad\qquad\qquad\qquad\leq\displaystyle\int_{M}\Bigl(\int_{M}\vert s\vert\vert K(w,z)\vert  d\mu(w) \Bigr)dv_{g}(z)$\\
${}\qquad\qquad\qquad\qquad\leq\displaystyle\int_{M}\Bigl(\int_{M}\vert s\vert\vert K(w,z)\vert d\mu(w)\Bigr)dv_{g}(z)$\\
${}\qquad\qquad\qquad\qquad\leq\displaystyle \int_{M}\vert s(w)\vert\Bigl(\int_{M}\vert K(z,w)\vert dv_{g}(z)\Bigr)d\mu(w)$\\
${}\qquad\qquad\qquad\qquad\leq\displaystyle\int_{M}\vert s(w)\vert\Bigl(\int_{M}e^{-\alpha d_{g}(z,w)}dv_g(z)\Bigr)d\mu(w)$\\
${}\qquad\qquad\qquad\qquad\leq\displaystyle\int_{M}\vert s(w)\vert d\mu(w)$\\
${}\qquad\qquad\qquad\qquad\leq \displaystyle\int_{M}\vert s(w)\vert dv_{g}(w)\ \hbox{($\mu$ is Carleson for $\mathcal{F}^{1}(M,L)$)}$.\\
Hence
$$
\int_{M}\vert  T_{\mu}s(w)\vert dv_{g}(z)\leq C_{\mu}\int_{M}\vert s\vert  dv_{g}
$$
If $f\in\mathcal{F}^{\infty}(M,L)$ then
\begin{eqnarray*}
\sup_{z\in M}\Big\vert\int_{M}<s(w),K(w,z>d\mu\Big\vert&\leq &\Vert s\Vert_{\infty}\sup_{z\in M}\int_{M}\vert K(z,w)\vert d\mu(w)\\
&=&\Vert s\Vert_{\infty}\sup_{z\in M}\int_{M}\vert s_{z}(w)\vert d\mu(w)\\
&\preceq &\Vert s\Vert_{\infty}\sup_{z\in M}\int_{M}\vert s_{z}(w)\vert dv_{g}(w)\\
&\preceq &\Vert s\Vert_{\infty}\sup_{z\in M}\int_{M}\vert K(z,w)\vert dv_{g}(w)\\
&\preceq &\Vert s\Vert_{\infty}\sup_{z\in M}\int_{M}e^{-\alpha d_{g}(z,w)} dv_{g}(w)\\
&\preceq &\Vert s\Vert_{\infty}
\end{eqnarray*}
Hence
$$
\sup_{z\in M}\vert T_{\mu}s(z)\vert \leq C_{\mu}\sup_{z\in M}\vert s(z)\vert
$$
We conclude that $T_{\mu} : \mathcal{F}^{p}(M,L)\rightarrow \mathcal{F}^{p}(M,L)$ is well defined and bounded if $\mu$ is Carleson.\\
Inversely, suppose $T_{\mu} : \mathcal{F}^{p}(M,L)\rightarrow \mathcal{F}^{p}(M,L)$ is bounded. Let $s_{z}\in \mathcal{F}^{2}(M,L)$ such that $\vert s_{z}(w)\vert=\vert K(w,z)\vert$. By reproducing property of the Bergman kernel
$$
s_{z}(w)=\int_{M}<s_{z}(t),K(t,w)>dv_{g}(t)
$$
By diagonal bounds for the Bergman kernel , there exists $C, \delta > 0$ such that $
\vert s_{z}(w)\vert\geq C$ for all $w\in B_{g}(z,\delta)$. We have
\begin{eqnarray*}
\mu(B_{g}(z,\delta))&\preceq &\int_{B_{g}(z,\delta)}\vert s_{z}(w)\vert^{2}d\mu(w)\\
&\preceq &\int_{M}\vert s_{z}(w)\vert^{2}d\mu(w)\\
&=& \int_{M}<s_{z}(w),\int_{M}<s_{z}(t),K(t,w)>dv_{g}(t)>d\mu(w)\\
&=&\int_{M}\Bigl(\int_{M}<s_{z}(w),<s_{z}(t),K(t,w)>>d\mu(w)\Bigr)dv_{g}(t)\\
&=&\int_{M}\Bigl(\int_{M}<s_{z}(t),<s_{z}(w),K(w,t)>>d\mu(w)\Bigr)dv_{g}(t)\\
&=&\int_{M}<s_{z}(t),\int_{M}<s_{z}(w),K(w,t)>d\mu(w)>dv_{g}(t)\\
&=&\int_{M}<s_{z}(t),T_{\mu}s_{z}(t)>dv_{g}(t)\\
&\leq &\Vert T_{\mu}s_{z}\Vert_{p}\Vert s_{z}\Vert_{q}\leq \Vert T_{\mu}\Vert\Vert s_{z}\Vert_{p}\Vert s_{z}\Vert_{q}\leq C
\end{eqnarray*}
Therfore by Theorem 4.3 $\mu$ is Carleson for $\mathcal{F}^{p}(M,L)$.
\subsection{Proof ot Theorem 1.3}
$(b)\Longleftrightarrow(c)$ follows from Theorem 4.8.\\
$(b)\Longleftrightarrow(d)$ follows from Theorem 4.6.\\
$(b)\Longleftrightarrow(a)$ Suppose that $\mu$ is vaninshing  Carleson. Let $s\in \mathcal{F}^{2}(M,L)$. Let $s_{z}$ the holomrphic section such that $\vert s_{z}(w)\vert=\vert K(w,z)\vert$. Then
$$
\int_{M}\vert T_{\mu}(z)\vert^{2}dv_{g}(z)=\int_{M}\Big\vert\int_{M}<s(w),K(w,z)>d\mu(w)\Big\vert^{2}dv_{g}(z)
$$
\begin{eqnarray*}
&\leq &\int_{M}\Bigl(\int_{M}\vert s(w)\vert^{2}\vert K(w,z)\vert d\mu(w)\Bigr)\Bigl(\int_{M}\vert K(w,z)\vert d\mu(w)\Bigr)dv_{g}(z)\\
&= &\int_{M}\Bigl(\int_{M}\vert s(w)\vert^{2}\vert K(w,z)\vert d\mu(w)\Bigr)\Bigl(\int_{M}\vert s_{z}(w)\vert d\mu(w)\Bigr)dv_{g}(z)\\
&\preceq &\int_{M}\Bigl(\int_{M}\vert s(w)\vert^{2}\vert K(w,z)\vert d\mu(w)\Bigr)\Bigl(\int_{M}\vert s_{z}(w)\vert dv_{g}(w)\Bigr)dv_{g}(z)\\
&\preceq &\int_{M}\int_{M}\vert s(w)\vert^{2}\vert K(w,z)\vert d\mu(w)dv_{g}(z)\Bigl(\sup_{z\in M}\int_{M}\vert s_{z}(w)\vert dv_{g}(w)\Bigr)\\
&\preceq &\int_{M}\int_{M}\vert s(w)\vert^{2}\vert K(z,w)\vert dv_{g}(z)d\mu(w) \\
&\preceq &\int_{M}\vert s(w)\vert^{2}d\mu(w)
\end{eqnarray*}
Hence $\Vert T_{\mu}\Vert\leq C\Vert \imath_{\mu}\Vert$ and this follows that $T_{\mu}$ is compact.\\
Inversely suppose that $T_{\mu} : \mathcal{F}^{2}(M,L)\rightarrow \mathcal{F}^{2}(M,L)$ is compact. Let $(z_{j})\in M$ such that $d_{g}(z_{j},z_{0})\rightarrow 0$ and $s_{z_{j}}\in \mathcal{F}^{2}(M,L)$ such that $\vert s_{z_{j}}(w)\vert=\vert K(w,z_{n})\vert$. By off-diagonal estimate, the sequence $(s_{z_{j}})$ is bounded on $\mathcal{F}^{2}(M,L)$ and converge locally uniformly to zero section. Hence $(s_{z_{j}})$  converge weakly to the zero. Since $T_{\mu}$ is compact and
$$
\Big\vert\int_{M}<T_{\mu}s_{z_{j}},s_{z_{j}}>dv_{g}\Big\vert\leq\Vert T_{\mu}s_{z_{j}}\Vert_{2}\Vert s_{z_{j}}\Vert_{2}
$$
we have
$$
\lim_{j\rightarrow\infty}\int_{M}<T_{\mu}s_{z_{j}},s_{z_{j}}>dv_{g}=0
$$
From
$$
\Big\vert\int_{M}<T_{\mu}s_{z_{j}},s_{z_{j}}>dv_{g}\Big\vert=\int_{M}\vert s_{z_ {j}}\vert^{2}dv_{g}
$$
the diagonal estimates  $\vert s_{z_{j}}(w)\vert \succeq 1$ on $B_{g}(z_{j},\delta)$ , we get
$$
\lim_{j\rightarrow\infty}\mu(B_{g}(z_{j},\delta))\preceq \lim_{j\rightarrow\infty}\Big\vert\int_{M}<T_{\mu}s_{z_{j}},s_{z_{j}}>
dv_{g}\Big\vert=0
$$
By Theorem 4.8 $\mu$ is vanishing Carleson for $\mathcal{F}^{2}(M,L)$.
\section{Schatten Class Memberhip of Toeplitz Operators}
Suppose that $T$ is a compact operator between
Hilbert spaces $H_{1}$ and $H_{2}$. Then $T$ has a Schmidt decomposition, so that
there are orthonormal bases $(e_{n})$ and $(\sigma_{n})$ of $H_{1}$ and $H_{2}$ respectively and a sequence
$(\lambda_{n})$ with $\lambda_{n} > 0$ and $\lambda_{n}\rightarrow 0$ such that for all $f\in H_{1}$
$$
Tf=\sum_{n=0}^{\infty}\lambda_{n}<f,e_{n}>\sigma_{n}
$$
For $0 < p \leq \infty$, such a compact operator $T$ belongs to the Schatten-von Neumann $p$-class $\mathcal{S}_{p}=\mathcal{S}_{p}(H_{1},H_{2})$ if and only if
$$
\Vert T\Vert_{\mathcal{S}_{p}}^{p}:=\sum_{n=0}^{\infty}\lambda_{n}^{p} < \infty
$$
If $p\geq 1$ then $\mathcal{S}_{p}$ is a Banach space. If $0 < p < 1$ then $\mathcal{S}_{p}$  is  a Frechet space.\\  For all $T,S \in \mathcal{S}_{p}(H_{1},H_{1})$.
\begin{equation}
\Vert T+S\Vert_{\mathcal{S}_{p}}^{p}\leq 2(\Vert T\Vert^{p}_{\mathcal{S}_{p}}+\Vert S\Vert_{\mathcal{S}_{p}}^{p})
\end{equation}
By  Proposition 6.3.3 in [33], if $T$ is a positive operator on a Hilbert space $H$ and  $0 < p< 1$ then
$$
<T^{p}e_{m},e_{m}>\le <Te_{m},e_{m}>^{p}
$$
where $(e_{m})$ is an orthonormal set of $H$.
It give that
$$
\Vert T\Vert^{p}_{\mathcal{S}_{p}}\le\sum_{m,k}^{\infty}\vert<Te_{m},e_{k}>\vert^{p}
$$
We will introduce the complex interpolation of Schatten $p$-class.
\begin{lem}
If $1\leq p \leq \infty$ then
$$
[\mathcal{S}_{p_{0}},\mathcal{S}_{p_{1}}]_{\theta}=\mathcal{S}_{p}
$$
with equal norm for all $1\leq p_{0} < p_{1} \leq\infty$ and all $\theta\in ]0,1[$, where
$$
{1\over p}={1-\theta\over p_{0}}+{\theta\over p_{1}}
$$
\end{lem}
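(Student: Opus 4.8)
The plan is to prove the identity by Calder\'on's complex interpolation method, establishing the two norm inequalities $\Vert T\Vert_{[\mathcal{S}_{p_{0}},\mathcal{S}_{p_{1}}]_{\theta}}\leq\Vert T\Vert_{\mathcal{S}_{p}}$ and $\Vert T\Vert_{\mathcal{S}_{p}}\leq\Vert T\Vert_{[\mathcal{S}_{p_{0}},\mathcal{S}_{p_{1}}]_{\theta}}$ separately. Since finite-rank operators are dense in $\mathcal{S}_{r}$ for $r<\infty$ and in the interpolation space, it suffices to argue for a finite-rank $T$ and then pass to the limit; this removes all convergence and measurability subtleties from the analytic families constructed below. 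Let $\mathcal{G}$ denote the Calder\'on space of functions $f$ on the closed strip $\{0\leq\Re z\leq 1\}$, valued in $\mathcal{S}_{p_{0}}+\mathcal{S}_{p_{1}}$, bounded and continuous on the strip, analytic in its interior, with $t\mapsto f(j+it)$ continuous into $\mathcal{S}_{p_{j}}$, normed by $\Vert f\Vert_{\mathcal{G}}=\max_{j=0,1}\sup_{t}\Vert f(j+it)\Vert_{\mathcal{S}_{p_{j}}}$; by definition $\Vert T\Vert_{[\mathcal{S}_{p_{0}},\mathcal{S}_{p_{1}}]_{\theta}}=\inf\{\Vert f\Vert_{\mathcal{G}}:\ f(\theta)=T\}$. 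Throughout write $T=U\vert T\vert$ for the polar decomposition and $\vert T\vert=\sum_{n}\lambda_{n}\langle\cdot,e_{n}\rangle e_{n}$ for the singular value decomposition, so that $\Vert T\Vert_{\mathcal{S}_{r}}^{r}=\sum_{n}\lambda_{n}^{r}$.

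For the first inequality I construct an admissible family realizing $T$ at $z=\theta$. Put $\beta(z)=p\bigl(\tfrac{1-z}{p_{0}}+\tfrac{z}{p_{1}}\bigr)$ and $T_{z}=U\sum_{n}\lambda_{n}^{\beta(z)}\langle\cdot,e_{n}\rangle e_{n}$, a finite sum, hence entire in $z$, with $T_{\theta}=U\vert T\vert=T$ because $\beta(\theta)=1$. On the left edge $\Re\beta(it)=p/p_{0}$, so $\Vert T_{it}\Vert_{\mathcal{S}_{p_{0}}}^{p_{0}}=\sum_{n}\lambda_{n}^{p}=\Vert T\Vert_{\mathcal{S}_{p}}^{p}$, and on the right edge $\Re\beta(1+it)=p/p_{1}$ gives $\Vert T_{1+it}\Vert_{\mathcal{S}_{p_{1}}}=\Vert T\Vert_{\mathcal{S}_{p}}$. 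Thus $(T_{z})\in\mathcal{G}$ with $\Vert(T_{z})\Vert_{\mathcal{G}}\leq\Vert T\Vert_{\mathcal{S}_{p}}$, whence $\Vert T\Vert_{[\mathcal{S}_{p_{0}},\mathcal{S}_{p_{1}}]_{\theta}}\leq\Vert T\Vert_{\mathcal{S}_{p}}$.

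For the reverse inequality I use duality together with the three-lines lemma. By $\mathcal{S}_{r}^{*}=\mathcal{S}_{r'}$ under the trace pairing $\langle A,B\rangle=\mathrm{tr}(AB)$ and H\"older's inequality $\vert\mathrm{tr}(AB)\vert\leq\Vert A\Vert_{\mathcal{S}_{r}}\Vert B\Vert_{\mathcal{S}_{r'}}$, it is enough to bound $\vert\mathrm{tr}(TS)\vert$ for finite-rank $S$ with $\Vert S\Vert_{\mathcal{S}_{p'}}=1$, since $\Vert T\Vert_{\mathcal{S}_{p}}$ is the supremum of such quantities. Exactly as above, but with the dual exponents $p_{0}',p_{1}'$, I build an admissible $S_{z}$ with $S_{\theta}=S$, $\Vert S_{it}\Vert_{\mathcal{S}_{p_{0}'}}\leq 1$ and $\Vert S_{1+it}\Vert_{\mathcal{S}_{p_{1}'}}\leq 1$. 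Given any $f\in\mathcal{G}$ with $f(\theta)=T$, the scalar function $F(z)=\mathrm{tr}(f(z)S_{z})$ is analytic and bounded on the strip, with $\vert F(it)\vert\leq\Vert f(it)\Vert_{\mathcal{S}_{p_{0}}}\Vert S_{it}\Vert_{\mathcal{S}_{p_{0}'}}\leq\Vert f\Vert_{\mathcal{G}}$ and likewise $\vert F(1+it)\vert\leq\Vert f\Vert_{\mathcal{G}}$. The three-lines lemma gives $\vert\mathrm{tr}(TS)\vert=\vert F(\theta)\vert\leq\Vert f\Vert_{\mathcal{G}}$; taking the infimum over $f$ and then the supremum over $S$ yields $\Vert T\Vert_{\mathcal{S}_{p}}\leq\Vert T\Vert_{[\mathcal{S}_{p_{0}},\mathcal{S}_{p_{1}}]_{\theta}}$, and together with the first inequality this proves equality of the spaces with equal norms.

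The main obstacle I anticipate is the endpoint $p_{1}=\infty$, where $\mathcal{S}_{\infty}$ is the space of compact operators and the naive identification of duals breaks down, since $\mathcal{S}_{1}^{*}=\mathcal{B}(H)\neq\mathcal{S}_{\infty}$. I would resolve this by recalling that the supremum defining $\Vert T\Vert_{\mathcal{S}_{p}}$ may be taken over finite-rank $S$, for which the trace pairing and H\"older's inequality remain valid against the compact-operator endpoint (with dual exponent $p_{1}'=1$, so that $S_{z}$ lands in $\mathcal{S}_{1}$ on the right edge), and the three-lines argument goes through verbatim. The only remaining routine point is verifying admissibility, namely boundedness into $\mathcal{S}_{p_{0}}+\mathcal{S}_{p_{1}}$ and continuity up to the boundary of the families $T_{z}$ and $S_{z}$, which is immediate once $T$ and $S$ are finite-rank.
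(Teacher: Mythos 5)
The paper offers no proof of this lemma at all: it is stated as a known fact (it is the classical interpolation theorem for Schatten ideals, found e.g.\ in Zhu's \emph{Operator Theory in Function Spaces}, reference [33] of the paper), so there is nothing to compare your argument against except the standard literature. Your proof is exactly that standard Calder\'on argument --- reduce to finite rank, realize $T$ at $z=\theta$ by the analytic family $U\vert T\vert^{\beta(z)}$ to get one inequality, and use the trace duality $\langle A,B\rangle=\mathrm{tr}(AB)$ together with the three-lines lemma for the reverse --- and it is essentially sound, including the correct handling of the $p_{1}=\infty$ endpoint via finite-rank test operators. One slip to repair: your edge computation gives $\Vert T_{j+it}\Vert_{\mathcal{S}_{p_{j}}}=\bigl(\sum_{n}\lambda_{n}^{p}\bigr)^{1/p_{j}}=\Vert T\Vert_{\mathcal{S}_{p}}^{p/p_{j}}$, not $\Vert T\Vert_{\mathcal{S}_{p}}$, and since $p_{0}<p<p_{1}$ the exponent $p/p_{0}$ exceeds $1$, so the claimed bound $\Vert (T_{z})\Vert_{\mathcal{G}}\leq\Vert T\Vert_{\mathcal{S}_{p}}$ fails as written whenever $\Vert T\Vert_{\mathcal{S}_{p}}>1$. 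This is cured by first normalizing $\Vert T\Vert_{\mathcal{S}_{p}}=1$ (legitimate by homogeneity of both norms) or by inserting the compensating scalar factor $\Vert T\Vert_{\mathcal{S}_{p}}^{1-\beta(z)}$ into the family; the same normalization should be made explicit for the dual family $S_{z}$. With that adjustment the argument is complete and yields equality of norms as claimed.
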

\noindent  We will let $(a_{j})$ denote an $r$-lattice of $M$ and $\tilde{\mu}$ the Berezin transform of the positive measure on $M$. For $z\in M$ let $s_{z}\in\mathcal{F}^{2}(M,L)$ such that
$$
s_{z}(w)\otimes\overline{e(z)\over\vert e(z)\vert}=K(w,z)
$$
where $e$ is a frame of $L$ around $z$.
\begin{lem} If $T$ is a positive operator on $\mathcal{F}^{2}(M,L)$, then
\begin{eqnarray*}
\hbox{tr}(T) \asymp \int_{M}\tilde{T}(z)dv_{g}(z)
\end{eqnarray*}
where $$\tilde{T}(z) = \int_{M}<Ts_{z}(w),s_{z}(w)>dv_{g}(w)$$
is the Berezin transform of T. In particular, T is trace-class if and only if
the integral above converges.
\end{lem}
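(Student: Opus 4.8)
The plan is to prove the stronger statement that $\operatorname{tr}(T)=\int_M \tilde T(z)\,dv_g(z)$ \emph{exactly}, from which the asserted comparability $\asymp$ and the trace-class criterion follow at once. The starting point is the defining relation $s_z(w)\otimes \overline{e(z)/|e(z)|}=K(w,z)$, which I would first recast as a reproducing identity for the $\mathcal{F}^2$ inner product: expanding an arbitrary $\sigma\in\mathcal{F}^2(M,L)$ in an orthonormal basis and using the reproducing property $(2.2)$, one checks that $\langle \sigma, s_z\rangle_{\mathcal{F}^2}\,\frac{e(z)}{|e(z)|}=\sigma(z)$, and in particular
$$
|\langle \sigma, s_z\rangle_{\mathcal{F}^2}|=|\sigma(z)|_{L_z}\qquad(\sigma\in\mathcal{F}^2(M,L)).
$$
Since $s_z$ is determined by $K(\cdot,z)$ only up to a unimodular factor, both $\tilde T(z)$ and this identity are independent of the chosen frame $e$, a point I would record before proceeding.

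Next, because $T$ is positive I would introduce its positive square root $B=T^{1/2}$, a bounded self-adjoint operator on $\mathcal{F}^2(M,L)$ with $B^2=T$. Observing that $\tilde T(z)=\langle Ts_z,s_z\rangle_{\mathcal{F}^2}=\|Bs_z\|_2^2$, I would expand by Parseval in the orthonormal basis $(e_k)$ and use the self-adjointness of $B$ together with the reproducing identity above:
$$
\tilde T(z)=\sum_k|\langle Bs_z,e_k\rangle|^2=\sum_k|\langle s_z,Be_k\rangle|^2=\sum_k|(Be_k)(z)|_{L_z}^2.
$$
Every summand is now nonnegative, so Tonelli's theorem lets me integrate term by term:
$$
\int_M\tilde T(z)\,dv_g(z)=\sum_k\int_M|(Be_k)(z)|^2\,dv_g(z)=\sum_k\|Be_k\|_2^2=\operatorname{tr}(B^2)=\operatorname{tr}(T),
$$
using that $\sum_k\|Be_k\|_2^2=\operatorname{tr}(B^\ast B)=\operatorname{tr}(B^2)$ is basis-independent for the positive operator $T$. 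This yields the equality, hence the claimed $\asymp$, and shows that both sides are simultaneously finite or infinite, which is exactly the trace-class criterion.

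The routine inputs are already available: the Cauchy estimate $(2.1)$ guarantees that each $Be_k\in\mathcal{F}^2(M,L)$ has well-defined pointwise values and that evaluation is continuous. The only genuinely delicate points are (a) verifying the reproducing identity $|\langle \sigma,s_z\rangle|=|\sigma(z)|$ and its independence of the frame, which is precisely where the specific normalization of $s_z$ enters, and (b) justifying the interchange of the infinite sum with the integral. Point (b) is the main obstacle in spirit, but it is disarmed by passing to $B=T^{1/2}$: the square-root trick turns the integrand into a sum of nonnegative terms $|(Be_k)(z)|^2$, so that Tonelli applies unconditionally and no cancellation among the off-diagonal matrix entries $\langle Te_m,e_k\rangle$ ever has to be controlled.
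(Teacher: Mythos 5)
Your proposal is correct and follows essentially the same route as the paper: both pass to the positive square root $R=T^{1/2}$, use the reproducing identity $|\langle\sigma,s_z\rangle|=|\sigma(z)|_{L_z}$ together with Parseval to identify $\tilde T(z)=\|Rs_z\|^2$ with $\sum_k|(Re_k)(z)|^2$, and interchange the sum and integral by nonnegativity. Your version merely runs the computation in the opposite direction and records the exact equality $\operatorname{tr}(T)=\int_M\tilde T\,dv_g$, which the paper's own chain of identities also yields despite being stated only as $\asymp$.
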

\begin{proof} Since $T$ is positive then $T=R^{2}$ for some $R\geq 0$. Let $(e_{j})$ is an orhonormal basis of $\mathcal{F}^{2}(M,L)$. Then
\begin{eqnarray*}
\hbox{tr}(T)&=&\sum_{j=1}^{\infty}<Te_{j},e_{j}>
\asymp \sum_{j=1}^{\infty}\Vert Re_{j}\Vert^{2}\\
&=&\sum_{j=1}^{\infty}\int_{M}\vert Re_{j}(z)\vert^{2}dv_{g}(z)\\
&=& \int_{M}\sum_{j=1}^{\infty}\vert Re_{j}(z)\vert^{2}dv_{g}(z)
\end{eqnarray*}
Hence
\begin{eqnarray*}
\hbox{tr}(T)&=& \int_{M}\sum_{j=1}^{\infty}\Big\vert\int_{M} < Re_{j}(w),K(w,z)>dv_{g}(w)\Big\vert^{2}dv_{g}(z)\\
&=& \int_{M}\sum_{j=1}^{\infty}\Big\vert\int_{M}< Re_{j}(w),s_{z}(w)\otimes\overline{e(z)\over\vert e(z)\vert}>dv_{g}(w)\Big\vert^{2}dv_{g}(z)\\
&=& \int_{M}\sum_{j=1}^{\infty}\Big\vert\int_{M}
< Re_{j}(w),s_{z}(w)>\overline{e(z)\over\vert e(z)\vert}dv_{g}(w)\Big\vert^{2}dv_{g}(z)\\
&=& \int_{M}\sum_{j=1}^{\infty}\Big\vert\int_{M}
< Re_{j}(w),s_{z}(w)>dv_{g}(w)\Big\vert^{2}dv_{g}(z)\\
&=& \int_{M}\sum_{j=1}^{\infty}\Big\vert\int_{M}
< e_{j}(w),Rs_{z}(w)>dv_{g}(w)\Big\vert^{2}dv_{g}(z)\\
&\asymp & \int_{M}\Vert Rs_{z}\Vert^{2}dv_{g}(z)\asymp\int_{M}<Ts_{z},s_{z}>dv_{g}(z)= \int_{M}\tilde{T}(z)dv_{g}(z)
\end{eqnarray*}
\end{proof}
\begin{cor}
Let $\nu$ is a positive measure on $M$. Then $T_{\nu}\in\mathcal{S}_{1}$ if and only if $\mu(M) < \infty$. In particular, if the support of $\mu$ is compact then $T_{\mu}\in\mathcal{S}_{p}$ for each $p\geq 1$.
\end{cor}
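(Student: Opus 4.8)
The plan is to deduce the statement directly from the trace formula of Lemma 5.2, after identifying the Berezin transform of $T_\mu$ with that of the measure itself. First I would record that $T_\mu=\imath_\mu^{*}\imath_\mu$, where $\imath_\mu:\mathcal{F}^2(M,L)\to L^2(M,L,\mu)$ is the inclusion; in particular $T_\mu$ is a positive operator and $\langle T_\mu s,s\rangle=\|\imath_\mu s\|_{L^2(\mu)}^2=\int_M|s(w)|^2\,d\mu(w)$. Applying this to the (unnormalized) reproducing sections $s_z$ appearing in Lemma 5.2, whose defining property is $|s_z(w)|=|K(w,z)|$, gives
$$
\widetilde{T_\mu}(z)=\langle T_\mu s_z,s_z\rangle=\int_M|s_z(w)|^2\,d\mu(w)=\int_M|K(w,z)|^2\,d\mu(w).
$$
Comparing with the definition $\tilde\mu(z)=\int_M|k_z(w)|^2\,d\mu(w)=|K(z,z)|^{-1}\int_M|K(w,z)|^2\,d\mu(w)$, I obtain $\widetilde{T_\mu}(z)=|K(z,z)|\,\tilde\mu(z)$. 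Since the diagonal estimates of Propositions 3.3 and 3.4 give $|K(z,z)|\asymp 1$ uniformly in $z$, this yields $\widetilde{T_\mu}(z)\asymp\tilde\mu(z)$.

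The second step is the computation $\int_M\tilde\mu\,dv_g\asymp\mu(M)$. Because every integrand is nonnegative, Tonelli permits interchanging the order of integration:
$$
\int_M\tilde\mu(z)\,dv_g(z)=\int_M\Big(\int_M\frac{|K(w,z)|^2}{|K(z,z)|}\,dv_g(z)\Big)\,d\mu(w).
$$
Using $|K(z,z)|\asymp 1$ together with the symmetry $|K(z,w)|=|K(w,z)|$ and the reproducing identity $\int_M|K(w,z)|^2\,dv_g(z)=|K(w,w)|\asymp 1$, the inner integral is $\asymp 1$ uniformly in $w$, so $\int_M\tilde\mu\,dv_g\asymp\int_M d\mu=\mu(M)$. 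Combining this with Lemma 5.2 and the first step gives
$$
\mathrm{tr}(T_\mu)\asymp\int_M\widetilde{T_\mu}(z)\,dv_g(z)\asymp\int_M\tilde\mu(z)\,dv_g(z)\asymp\mu(M).
$$

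From here the equivalence is immediate. If $T_\mu\in\mathcal{S}_1$ then $\mathrm{tr}(T_\mu)<\infty$, so the chain above forces $\mu(M)<\infty$. Conversely, if $\mu(M)<\infty$ then $\mu$ is finite, hence trivially Carleson (one has $\mu(B_g(z,r))\le\mu(M)$ for all $z$), so $T_\mu$ is a bounded positive operator by Theorem 1.2; Lemma 5.2 then applies and gives $\mathrm{tr}(T_\mu)\asymp\mu(M)<\infty$, i.e. $T_\mu\in\mathcal{S}_1$. For the final assertion, a compactly supported positive measure is finite, so $T_\mu\in\mathcal{S}_1$; since $\mathcal{S}_1\subseteq\mathcal{S}_p$ for every $p\ge 1$ (as $\sum\lambda_n^p\le\sum\lambda_n$ once $\lambda_n\le 1$, which holds for all but finitely many $n$), we conclude $T_\mu\in\mathcal{S}_p$ for all $p\ge 1$.

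The steps are essentially bookkeeping rather than genuine obstacles, but two points deserve care. First, Lemma 5.2 must be invoked for an operator that is positive and, in the converse direction, bounded; this is exactly why the reduction ``$\mu(M)<\infty\Rightarrow\mu$ Carleson $\Rightarrow T_\mu$ bounded'' is needed before the trace formula can be quoted. Second, the Tonelli interchange must be justified, which is harmless here because all integrands are nonnegative. The only real analytic input, namely the two-sided diagonal bound $|K(z,z)|\asymp 1$ and the reproducing identity for $\int_M|K(w,z)|^2\,dv_g$, is already available from Section 3.
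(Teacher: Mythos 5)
Your proof is correct and follows essentially the same route as the paper: both rest on the trace formula of Lemma 5.2, the identity $\widetilde{T_{\mu}}(z)=\int_{M}|K(w,z)|^{2}\,d\mu(w)$, and a Tonelli interchange combined with $\int_{M}|K(w,z)|^{2}\,dv_{g}(z)=|K(w,w)|\asymp 1$ to get $\mathrm{tr}(T_{\mu})\asymp\mu(M)$. Your version is in fact slightly more careful than the paper's, since you explicitly verify that a finite measure is Carleson (so $T_{\mu}$ is bounded before Lemma 5.2 is invoked) and you justify the inclusion $\mathcal{S}_{1}\subseteq\mathcal{S}_{p}$ for the final assertion.
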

\begin{proof}
 Suppose that $\mu(M) < \infty$. By Lemma 5.2
\begin{eqnarray*}
\hbox{tr}(T_{\mu}) &=&\int_{M}\tilde{T_{\mu}}(z)dv_{g}(z)\\
&\asymp & \int_{M}\int_{M}<T_{\mu}s_{z}(w),s_{z}(w)>dv_{g}(w)dv_{g}(z)\\
&\asymp & \int_{M}\int_{M}\vert s_{z}(w)\vert^{2}d\mu(w)dv_{g}(z)\\
&\asymp & \int_{M}\int_{M}\vert K(w,z)\vert^{2}\vert dv_{g}(z)\Bigr)d\mu(w)\\
&\asymp & \int_{M}\vert K(w,w)\vert d\mu(w)\asymp\mu(M)
\end{eqnarray*}
Let $T_{\mu}\in\mathcal{S}_{1}$ and  $z_{0}\in M$ fixed. By diagonal bound estimates we have
\begin{eqnarray*}
\hbox{tr}(T) &\asymp& \int_{M}\tilde{T}(z)dv_{g}(z)
 \asymp  \int_{M}\Bigl(\int_{M}\vert K(w,z)\vert^{2} dv_{g}(z)\Bigr)d\mu(w)\\
&\succeq & \int_{M}\Bigl(\int_{B_{g}(z_{0},\delta)}\vert K(w,z)\vert^{2}\vert dv_{g}(w)\Bigr)d\mu(z)
 \asymp  vol_{g}(B_{g}(z_{0},\delta)\mu(M)\\
&\succeq & \mu(M)
\end{eqnarray*}
\end{proof}
\noindent We will need the following simple lemma that is well known in the
classical Fock space setting [33].
\begin{lem}
Let $r >0$ and let $(e_{j})$ be any orthonormal basis for $\mathcal{F}^{2}(M,L)$. If
$(a_{j})$ is an $r$-latice of $M$ and $H$ is the operator on $\mathcal{F}^{2}(M,L)$
 defined by $He_{j}:= s_{a_{j}}$
 then $H$
extends to a bounded operator on all of $\mathcal{F}^{2}(M,L)$
 whose operator norm is bounded
above by a constant that only depends on $r$.
\end{lem}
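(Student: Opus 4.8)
The plan is to test $H$ against arbitrary $g\in\mathcal{F}^{2}(M,L)$ and to reduce the claim to a uniform sampling inequality at the lattice points. For a finite linear combination $f=\sum_{j}c_{j}e_{j}$ and any $g$ with $\Vert g\Vert_{2}\leq 1$, linearity gives $\langle Hf,g\rangle=\sum_{j}c_{j}\langle s_{a_{j}},g\rangle$, so by Cauchy--Schwarz in $\ell^{2}$
$$
\vert\langle Hf,g\rangle\vert\leq\Bigl(\sum_{j}\vert c_{j}\vert^{2}\Bigr)^{1\over 2}\Bigl(\sum_{j}\vert\langle g,s_{a_{j}}\rangle\vert^{2}\Bigr)^{1\over 2}=\Vert f\Vert_{2}\Bigl(\sum_{j}\vert\langle g,s_{a_{j}}\rangle\vert^{2}\Bigr)^{1\over 2}.
$$
Taking the supremum over such $g$ shows that it suffices to bound $\sum_{j}\vert\langle g,s_{a_{j}}\rangle\vert^{2}$ by $C(r)\Vert g\Vert_{2}^{2}$ with $C(r)$ depending only on $r$; the extension of $H$ to all of $\mathcal{F}^{2}(M,L)$ is then automatic by density, and $\Vert H\Vert\leq C(r)^{1/2}$.

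The first step is to identify $\langle g,s_{z}\rangle$. Writing $s_{i}=f_{i}e$ in a frame $e$ around $z$ and unwinding the defining relation $s_{z}(w)\otimes\overline{e(z)/\vert e(z)\vert}=K(w,z)=\sum_{i}s_{i}(w)\otimes\overline{s_{i}(z)}$ gives $s_{z}=\vert e(z)\vert\sum_{i}\overline{f_{i}(z)}\,s_{i}$. Pairing with $g=\sum_{i}\langle g,s_{i}\rangle s_{i}$ and using $g(z)=e(z)\sum_{i}\langle g,s_{i}\rangle f_{i}(z)$ yields $\langle g,s_{z}\rangle=\vert e(z)\vert\,g(z)/e(z)$, hence the frame-independent identity
$$
\vert\langle g,s_{z}\rangle\vert=\vert g(z)\vert_{h}.
$$
Thus $\sum_{j}\vert\langle g,s_{a_{j}}\rangle\vert^{2}=\sum_{j}\vert g(a_{j})\vert_{h}^{2}$, and the whole matter is reduced to a sampling estimate for Bergman sections.

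The final step is the sampling inequality. Applying the sub-mean value inequality $(3.1)$ of Proposition 3.1 on balls of radius $r/2$ gives $\vert g(a_{j})\vert_{h}^{2}\leq C_{r/2}^{2}\int_{B_{g}(a_{j},r/2)}\vert g\vert^{2}\,dv_{g}$, where $C_{r/2}$ depends only on $r$ and the bounded geometry data. Since $(a_{j})$ is an $r$-lattice, Gromov's packing lemma (Lemma 2.4(ii)) makes the balls $B_{g}(a_{j},r/2)$ pairwise disjoint, so summing in $j$ gives
$$
\sum_{j}\vert g(a_{j})\vert_{h}^{2}\leq C_{r/2}^{2}\sum_{j}\int_{B_{g}(a_{j},r/2)}\vert g\vert^{2}\,dv_{g}\leq C_{r/2}^{2}\int_{M}\vert g\vert^{2}\,dv_{g}=C_{r/2}^{2}\Vert g\Vert_{2}^{2}.
$$
Combining with the first step gives $\Vert H\Vert\leq C_{r/2}$. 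The only delicate point is the bundle bookkeeping in the identity $\vert\langle g,s_{z}\rangle\vert=\vert g(z)\vert_{h}$, where one must track the conjugate-bundle factor $\overline{e(z)/\vert e(z)\vert}$ carefully; everything else is a routine combination of the sub-mean value property and the disjointness of the lattice balls, both of which supply constants depending only on $r$.
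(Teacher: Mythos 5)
Your proof is correct and follows essentially the same route as the paper: test $H$ against an arbitrary $g$, apply Cauchy--Schwarz, identify $\vert\langle g,s_{a_j}\rangle\vert$ with $\vert g(a_j)\vert_h$ via the reproducing kernel, and conclude with the sub-mean value inequality of Proposition 3.1 plus the separation of the lattice. The only (harmless) cosmetic difference is that you sum over the pairwise disjoint balls $B_{g}(a_{j},r/2)$, whereas the paper sums over the balls $B_{g}(a_{j},r)$ and uses their uniformly finite overlap.
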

\begin{proof}
Let  $\sigma, t\in\mathcal{F}^{2}(M,L)$ then $$<H\sigma,t>=\sum_{j=1}^{\infty}<\sigma,e_{j}><s_{a_{j}},t>$$
Since
$$
s_{a_{j}}(w)\otimes{e(a_{j})\over\vert e(a_{j})\vert}=K(w,a_{j})
$$
where $e$ is a frame of $L$ around $a_{j}$. Since $$t(a_{j})=\int\int_{M} <t(w),K(w,a_{j})> dv_{g}(w)$$
by Cauchy-Schwarz inequality and Proposition 3.1
\begin{eqnarray*}
\vert<A\sigma,t>\vert &\leq &\sum_{j=1}^{\infty}\vert<\sigma,e_{j}>_{L^{2}}\vert\vert<s_{a_{j}},t>_{L^{2}}\vert\\
&=& \sum_{j=1}^{\infty}\vert<\sigma,e_{j}>_{L^{2}}\vert\Big\vert<s_{a_{j}},t>_{L^{2}} {e(a_{j})\over\vert e(a_{j})\vert}\Big\vert_{L_{a_{j}}}\\
&\leq & \Vert\sigma\Vert_{2}\Bigl( \sum_{j=1}^{\infty}\Big\vert\int_{M} <t(w),K(w,a_{j})> dv_{g}(w)\Big\vert^{2}_{L_{a_{j}}}\Bigr)^{1\over 2}\\
&\leq &\Vert\sigma\Vert_{2}\Bigl(\sum_{j=1}^{\infty}\vert t(a_{j})\vert^{2}\Bigr)^{1\over 2}\\
&\preceq &\Vert \sigma\Vert_{2}\Bigl(\sum_{j=1}^{\infty}\int_{B_{g}(a_{j},r)}\vert t\vert^{2}dv_{g}\Bigr)^{1\over 2}\\
&\preceq &\Vert\sigma\Vert_{2}\Vert t\Vert_{2}
\end{eqnarray*}
\end{proof}
\begin{lem}  Let $p\geq 1$. If  $\phi\in L^{p}(M,dv_{g})$ and $T_{\phi}$ the Toeplitz operator with smbol $\phi$
$$
T_{\phi}s(z)=\int_{M}<s(w),K(w,z)>\phi(w)dv_{g}(w)
$$
for all $s\in\mathcal{F}^{2}(M,L)$ then  $T_{\phi}\in\mathcal{S}_{p}$.
\end{lem}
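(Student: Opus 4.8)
The plan is to deduce the conclusion by complex interpolation of the single linear map $\Phi : \phi\mapsto T_{\phi}$ between the two endpoints $p=1$ and $p=\infty$, using Lemma 5.1 together with the classical identity $[L^{1}(M,dv_{g}),L^{\infty}(M,dv_{g})]_{\theta}=L^{p}(M,dv_{g})$ with $1/p=1-\theta$. Thus it suffices to establish the two endpoint bounds $\Vert T_{\phi}\Vert_{\mathcal{S}_{1}}\leq C\Vert\phi\Vert_{1}$ and $\Vert T_{\phi}\Vert_{\mathcal{S}_{\infty}}\leq C\Vert\phi\Vert_{\infty}$, where $\mathcal{S}_{\infty}$ is identified with the ideal $\mathcal{B}(\mathcal{F}^{2}(M,L))$ of bounded operators.

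First I would treat the endpoint $p=\infty$. Observing that $T_{\phi}s=P(\phi s)$, where $P$ is the Bergman projection and hence an orthogonal projection of norm one on $L^{2}(M,L)$, one has for $\phi\in L^{\infty}(M,dv_{g})$ and $s\in\mathcal{F}^{2}(M,L)$
$$
\Vert T_{\phi}s\Vert_{2}=\Vert P(\phi s)\Vert_{2}\leq\Vert\phi s\Vert_{2}\leq\Vert\phi\Vert_{\infty}\Vert s\Vert_{2},
$$
so that $\Phi$ maps $L^{\infty}(M,dv_{g})$ boundedly into $\mathcal{B}(\mathcal{F}^{2}(M,L))=\mathcal{S}_{\infty}$ with $\Vert T_{\phi}\Vert_{\mathcal{S}_{\infty}}\leq\Vert\phi\Vert_{\infty}$. (Alternatively one could run a Schur test with the off-diagonal bound $|K(z,w)|\leq Ce^{-\alpha d_{g}(z,w)}$ of Theorem 3.6, but the factorization $T_{\phi}=P\circ M_{\phi}$ is cleaner.)

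Next I would treat the endpoint $p=1$. Decomposing a complex symbol as $\phi=(\phi_{1}-\phi_{2})+i(\phi_{3}-\phi_{4})$ with each $\phi_{j}\geq 0$ and $\phi_{j}\leq|\phi|$, it is enough to bound each positive piece. For $\psi\geq 0$ in $L^{1}(M,dv_{g})$ the operator $T_{\psi}$ is positive, since $\langle T_{\psi}s,s\rangle=\int_{M}|s|^{2}\psi\,dv_{g}\geq 0$, so by Lemma 5.2 and the two-sided diagonal estimate $|K(w,w)|\asymp 1$,
$$
\Vert T_{\psi}\Vert_{\mathcal{S}_{1}}=\mathrm{tr}(T_{\psi})\asymp\int_{M}\widetilde{T_{\psi}}(z)\,dv_{g}(z)\asymp\int_{M}|K(w,w)|\psi(w)\,dv_{g}(w)\asymp\Vert\psi\Vert_{1},
$$
exactly as in the computation of Corollary 5.3 applied to the finite measure $d\mu=\psi\,dv_{g}$ with $\mu(M)=\Vert\psi\Vert_{1}<\infty$. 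Summing the four pieces and using that $\mathcal{S}_{1}$ is a Banach space yields $\Vert T_{\phi}\Vert_{\mathcal{S}_{1}}\leq C\Vert\phi\Vert_{1}$, so $\Phi$ is bounded $L^{1}(M,dv_{g})\to\mathcal{S}_{1}$.

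Finally, since the same linear map $\Phi$ is bounded $L^{1}\to\mathcal{S}_{1}$ and $L^{\infty}\to\mathcal{S}_{\infty}$, complex interpolation together with Lemma 5.1 produces, for $1/p=1-\theta$ and $\theta\in(0,1)$, a bounded map $L^{p}(M,dv_{g})\to\mathcal{S}_{p}$; combined with the two endpoints this gives $\Vert T_{\phi}\Vert_{\mathcal{S}_{p}}\leq C\Vert\phi\Vert_{p}$ for every $p\in[1,\infty]$, and in particular $\phi\in L^{p}(M,dv_{g})$ implies $T_{\phi}\in\mathcal{S}_{p}$. I expect the trace-class endpoint to be the main obstacle: one must carefully justify the reduction to positive symbols and reproduce the trace identity of Corollary 5.3, for which the lower bound $|K(w,w)|\gtrsim 1$ (coming from the one-point interpolation of Proposition 3.3, complementing the upper bound of Proposition 3.4) is essential. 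The boundedness endpoint and the interpolation step are then routine.
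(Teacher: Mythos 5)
Your proposal is correct and follows essentially the same route as the paper: an $\mathcal{S}_{1}$ bound via the trace identity $\mathrm{tr}(T_{|\phi|})\preceq\int_{M}|K(w,w)||\phi|\,dv_{g}\preceq\Vert\phi\Vert_{1}$ (using the diagonal bound on the Bergman kernel), the operator-norm bound $\Vert T_{\phi}\Vert\leq\Vert\phi\Vert_{\infty}$ at the other endpoint, and complex interpolation through Lemma 5.1. Your splitting into positive parts and the explicit factorization $T_{\phi}=P\circ M_{\phi}$ are just slightly more careful packagings of the paper's direct orthonormal-basis computation; note only the upper bound $|K(w,w)|\preceq 1$ is actually needed here, not the two-sided estimate.
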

\begin{proof}
Assume $p=1$. Let $g\in L^{1}(M,dv_{g})$ and $(e_{j})$ be an orthonormal set on $\mathcal{F}^{2}(M,L)$. By Fubini theorem
$$
<T_{\phi}e_{j}(z),e_{j}(z)>=\int_{M}\vert e_{j}(z)\vert^{2}\phi(z)dv_{g}(z)
$$
Hence
\begin{eqnarray*}
\sum_{j=1}^{\infty}\vert <T_{\phi}e_{j},e_{j}>\vert&=&\sum_{j=1}^{\infty}\Big\vert\int_{M}\vert e_{j}(w)\vert^{2}\phi(w)dv_{g}(w)\\
&\leq & \int_{M}\sum_{j=1}^{\infty}\vert e_{j}(w)\vert\vert \phi(w)\vert dv_{g}(w)\\
&=&\int_{m}\vert\phi(w)\vert\vert K(w,w)\vert d_{g}(w)\\
&\preceq &\Vert \phi\Vert_{1}\quad\hbox{(by diagonal estimate)}
\end{eqnarray*}
Thus, for $p=1$, $T_{\phi}\in\mathcal{S}_{1}$ and $\Vert T_{\mu}\Vert_{\mathcal{S}_{1}}\preceq \Vert \phi\Vert_{1}$.
Also $\Vert T_{\mu}\Vert_{\mathcal{S}_{\infty}}\preceq \Vert \phi\Vert_{\infty}
$. By interpolation of Lemma 5.1, we can get $T_{\phi}\in\mathcal{S}_{p}$ and
$
\Vert T_{\phi}\Vert_{\mathcal{S}_{p}}\leq \Vert \phi\Vert_{p}.
$
\end{proof}
\begin{lem} Suppose that $(M,g)$ satisfies the conditions (1)(2),(3) and (4) of section 4. Let $ r > 0$ and $0 < p < 1$. The following are equivalent : \\
(a) $\tilde{\mu} \in L^{p}(M,dv_{g})$\\
(b) $\mu(B_{g}(.,r)\in L^{p}(M,dv_{g})$\\
(c) $\mu(B_{g}(a_{j},r))\in  \ell^{p}(\mathbb{N})$
\end{lem}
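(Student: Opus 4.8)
The plan is to establish the cycle of implications $(a)\Rightarrow(b)\Rightarrow(c)\Rightarrow(a)$. Throughout I would use the uniform bound $\vert K(z,z)\vert\asymp 1$ (Propositions 3.3 and 3.4) to replace the Berezin transform by the unnormalized quantity, writing $\tilde\mu(z)\asymp\int_M\vert K(z,w)\vert^2\,d\mu(w)$. I would also first record that conditions $(b)$ and $(c)$ are independent of the radius used and of the particular lattice: since $0<p<1$, for radii $0<r_1<r_2$ one covers each ball $B_g(a_j,r_2)$ by a uniformly bounded number $N$ of lattice balls of radius $r_1$ (Gromov's packing Lemma 2.4), and the elementary subadditivity $(\sum_k c_k)^p\le\sum_k c_k^p$ together with the uniform ball-size condition (Remark 2.7, giving $\mathrm{vol}_g(B_g(\cdot,r))\asymp 1$) lets one pass freely between radii. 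Hence it suffices to verify each implication for whatever convenient radius arises.

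For $(a)\Rightarrow(b)$ I would use the near-diagonal lower bound of Proposition 3.5: there is $\delta>0$ with $\vert K(z,w)\vert\ge C_1\vert K(z,z)\vert$ for $w\in B_g(z,\delta)$, so that $\vert k_z(w)\vert^2=\vert K(z,w)\vert^2/\vert K(z,z)\vert\succeq 1$ on $B_g(z,\delta)$ and therefore $\tilde\mu(z)\ge\int_{B_g(z,\delta)}\vert k_z(w)\vert^2\,d\mu(w)\succeq\mu(B_g(z,\delta))$. Integrating the $p$-th power gives $\int_M\mu(B_g(z,\delta))^p\,dv_g\preceq\int_M\tilde\mu^p\,dv_g<\infty$, which is $(b)$. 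For $(b)\Rightarrow(c)$ I would choose the lattice so that the balls $B_g(a_j,\delta/2)$ are pairwise disjoint; for $z\in B_g(a_j,\delta/2)$ one has $B_g(a_j,\delta/2)\subset B_g(z,\delta)$, hence $\mu(B_g(a_j,\delta/2))^p\le\mu(B_g(z,\delta))^p$, and integrating over the disjoint balls (each of volume $\asymp 1$) and summing yields $\sum_j\mu(B_g(a_j,\delta/2))^p\preceq\int_M\mu(B_g(z,\delta))^p\,dv_g<\infty$.

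The crux is $(c)\Rightarrow(a)$, and this is where I expect the real work. Here I would invoke the off-diagonal exponential decay $\vert K(z,w)\vert\le Ce^{-\alpha d_g(z,w)}$ of Theorem 3.6 to write $\tilde\mu(z)\preceq\int_M e^{-2\alpha d_g(z,w)}\,d\mu(w)$. Covering $M$ by lattice balls $B_g(a_j,r)$ with bounded overlap and using $d_g(z,w)\ge d_g(z,a_j)-r$ for $w\in B_g(a_j,r)$, I obtain $\tilde\mu(z)\preceq\sum_j e^{-2\alpha d_g(z,a_j)}\mu(B_g(a_j,r))$. The decisive point is that $0<p<1$ permits the subadditivity $\big(\sum_j c_j\big)^p\le\sum_j c_j^p$, so that $\tilde\mu(z)^p\preceq\sum_j e^{-2\alpha p\,d_g(z,a_j)}\mu(B_g(a_j,r))^p$; integrating in $z$ and applying the uniform integrability condition $(4)$ with $\beta=2\alpha p$, namely $\sup_{a}\int_M e^{-2\alpha p\,d_g(z,a)}\,dv_g(z)<\infty$, gives $\int_M\tilde\mu^p\,dv_g\preceq\sum_j\mu(B_g(a_j,r))^p<\infty$, i.e. $(a)$. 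The main obstacle is precisely this interchange of the $p$-th power with the sum, which fails for $p\ge 1$ and explains the restriction $0<p<1$, combined with the need for condition $(4)$ to render the geometrically decaying kernel integrable uniformly in the lattice point; once these two ingredients are in place the estimate is clean.
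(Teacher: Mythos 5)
Your proposal is correct and follows essentially the same route as the paper: the same cycle $(a)\Rightarrow(b)\Rightarrow(c)\Rightarrow(a)$, with the near-diagonal lower bound for $(a)\Rightarrow(b)$, disjoint half-radius lattice balls of volume $\asymp 1$ for $(b)\Rightarrow(c)$, and the off-diagonal exponential decay combined with the subadditivity $(\sum_j c_j)^p\le\sum_j c_j^p$ and condition (4) for $(c)\Rightarrow(a)$. Your explicit preliminary remark that $(b)$ and $(c)$ are independent of the radius and lattice is a small improvement in precision over the paper, which uses different radii ($r$, $\delta$, $r/2$) across the three implications without comment, and your identification of the key inequality as subadditivity of $t\mapsto t^p$ is more accurate than the paper's label of ``H\"older inequality'' for the same step.
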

\begin{proof}
\noindent$(c)\Longrightarrow(a)$  We have
\begin{eqnarray*}
\tilde{\mu}(z)&=&\int_{M}\vert k_{z}(w)\vert^{2}d\mu(w)\\
&=&\sum_{j=1}^{\infty}\int_{B_{g}(a_{j},r)}{\vert K(w,z)\vert^{2}\over\vert K(z,z)\vert}d\mu(w)\\
&\leq & C\sum_{j=1}^{\infty}\int_{B_{g}(a_{j},r)}e^{-2\alpha d_{g}(w,z)}d\mu(w)
\end{eqnarray*}
Since $d_{g}(w,)\geq d_{g}(z,a_{j})-d_{g}(a_{j},w)$ for all $w\in B_{g}(a_{j},r)$
\begin{eqnarray*}
\tilde{\mu}(z) &\leq &\sum_{j=1}^{\infty}\int_{B_{g}(a_{j},r)}e^{-2\alpha d_{g}(w,z)}d\mu(w)\\
&\leq & C\sum_{j=1}^{\infty}\int_{B_{g}(a_{j},r)}e^{-2\alpha( d_{g}(z,a_{j})-r)}d\mu(w)\\
&\preceq & \sum_{j=1}^{\infty}e^{-2\alpha d_{g}(z,a_{j})}\mu(B_{j}(a_{j},r))
\end{eqnarray*}
By H\"older inequality $
\tilde{\mu}(z)^{p} \preceq\displaystyle\sum_{j=1}^{\infty}e^{-2p\alpha d_{g}(z,a_{j})}\mu(B_{j}(a_{j},r))^{p}$.
Hence
\begin{eqnarray*}
\int_{M}\tilde{\mu}(z)^{p} &\preceq &\sum_{j=1}^{\infty}\int_{M}e^{-2p\alpha d_{g}(z,a_{j})}\mu(B_{j}(a_{j},r))^{p}\\
&\preceq &\sum_{j=1}^{\infty}\mu(B_{j}(a_{j},r))^{p}\sup_{j\in\mathbb{N}}\int_{M}e^{-2p\alpha d_{g}(z,a_{j})}\\
&\preceq & \sum_{j=1}^{\infty}\mu(B_{j}(a_{j},r))^{p} < \infty
\end{eqnarray*}
\noindent $(a)\Longrightarrow(b)$ By diagonal bound estimate $\vert K(z,z)\vert\asymp 1$ and $\vert K(z,w)\vert \succeq \vert K(z,z)\vert$ for all $w\in B_{g}(z,\delta)$
\begin{eqnarray*}
\tilde{\mu}(z)&=&\int_{M}\vert k_{z}(w)\vert^{2}d\mu(w)\\
&\geq & \int_{B_{g}(z,r)}\vert k_{z}(w)\vert^{2}d\mu(w)\\
& \succeq & \int_{B_{g}(z,r)}\vert K(w,z)\vert^{2}d\mu(w)\\
&\succeq &\sum_{j=1}^{\infty}\int_{B_{g}(z,r)\cap B_{g}(a_{j},\delta)}\vert K(w,z)\vert^{2}d\mu(w)\\
&\succeq &\sum_{j=1}^{\infty}\int_{B_{g}(z,r)\cap B_{g}(a_{j},\delta)}d\mu(w)
\succeq \mu(B(z,r))
\end{eqnarray*}
\noindent$(b)\Longrightarrow(c)$ We have
$$
\sum_{j=1}^{\infty}\int_{B_{g}(a_{j},{r\over 2})}\mu(B(z,r))^{p}dv_{g}(z) \preceq \int_{M}\mu(B_{g}(z,r))^{p}dv_{g}(z)
$$
Since for any $z\in B_{g}(a_{j},{r\over 2})\ :\ \mu(B_{g}(z,r))\geq\mu(B_{g}(a_{j},{r\over 2})$, then
$$
\sum_{j=1}^{\infty}\mu(B_{g}(a_{j},{r\over 2}))^{p} \preceq\int_{M}\mu(B_{g}(z,r))^{p}dv_{g}(z)
$$
Thus $\mu(B_{g}(.,r))\in L^{p}(M,dv_{g})$ implies that $(\mu(B_{g}(a_{j},r))\in \ell^{p}(\mathbb{N})$.
\end{proof}
\subsection{Proof of Theorem 1.4 for the case $1 \leq p < \infty$}
$(a)\Longrightarrow(b)$.  Since $T_{\mu}$ is a positive operator then $T_{\mu}\in\mathcal{S}_{p}$ if and only if $T_{\mu}^{p}\in\mathcal{S}_{1}$. By Proposition 6.3.3 in [33]
\begin{eqnarray*}
\tilde{T}^{p}_{\mu}(z)&=&\int_{M}<T^{p}_{\mu}s_{z}(w),s_{z}(w)>dv_{g}(w)\\
&\geq &\Bigl(\int_{M}<T_{\mu}s_{z}(w),s_{z}(w)>\Bigr)^{p}\\
&=&(\tilde{\mu}(z))^{p}
\end{eqnarray*}
Hence by Lemma 5.2
$$
\int_{M}(\tilde{\mu}(z))^{p} dv_{g}(z)\leq\int_{M}\vert  \tilde{T^{p}}_{\mu}(z)\vert\leq\hbox{tr}(T_{\mu}^{p}) < \infty
$$
Then $\phi\in L^{p}(M,dv_{g})$.\\
$(b)\Longrightarrow(c)$ Put
$$
\phi_{r}(z)=\mu(B_{g}(z,r))
$$
By diagonal estimates for the Bergman kernel, for some  $\epsilon > 0$ we have
\begin{eqnarray*}
\mu(B_{g}(z,\epsilon))&\preceq &\int_{B_{g}(z,\epsilon)}\vert K(z,w)\vert^{2}d\mu(w)\\
&\preceq & {1\over \vert K(z,z)\vert}\int_{B_{g}(z,\epsilon)}\vert K(z,w)\vert^{2}d\mu(w)\\
&\preceq &\tilde{\mu}(z)
\end{eqnarray*}
Hence $z\rightarrow\phi_{\epsilon}(z):=\mu(B_{g}(z,\epsilon))\in L^{p}(M,dv_{g})$.\\
$(c)\Longrightarrow(a)$.  Suppose that $T_{\phi}\in \mathcal{S}_{p}$. For $z_{0}\in M$ fixed, write $\mu=\mu_{1}+\mu_{2}$ where $$
\mu_{1}:=\mu\mid_{B_{g}(z_{0},\epsilon)}\quad\hbox{and}\quad\mu_{2}:=\mu\mid_{M\setminus B_{g}(z_{0},\epsilon)}
$$
By Corollary 5.5  $T_{\mu_{1}}\in\mathcal{S}_{p}$ . Hence it suffices to show that $T_{\mu_{2}}\in\mathcal{S}_{p}$. If $\sigma\in\mathcal{F}^{2}(M,L)$ we have
\begin{eqnarray*}
<T_{\phi_{\epsilon}}\sigma,\sigma>&=&\int_{M}\vert \sigma(w)\vert\phi_{\epsilon}(w) dv_{g}(w)\\
&=& \int_{M}\vert \sigma(w)\vert^{2}\mu(B_{g}(w,\epsilon)) dv_{g}(w)\\
&\geq &\int_{z\in M}\int_{B_{g}(z,\epsilon)}\vert \sigma(w)\vert^{2}dv_{g}(w)d\mu(z)\\
&\succeq & \int_{M\setminus B_{g}(z_{0},\epsilon)}\vert \sigma(z)\vert^{2} d\mu(z)\quad\hbox{(Prop. 3.1)}\\
&\succeq & <T_{\mu_{2}}\sigma,\sigma>
\end{eqnarray*}
Hence $T_{\mu_{2}} \preceq T_{\phi_{\epsilon}}$ so that $\Vert T_{\mu_{2}}\Vert_{p}\preceq\Vert T_{\phi_{\epsilon}}\Vert_{p}$ and then $T_{\mu_{2}}\in \mathcal{S}_{p}$.
\subsection{Proof of Theorem 1.4 for the case $0 < p < 1$}
By lemma 5.6, it suffices to prove $(a)\Longrightarrow (d)$ and $(b)\Longrightarrow (a)$.\\
$(a)\Longrightarrow (d)$ Suppose that $T_{\mu}\in\mathcal{S}_{p}$. By near diagonal uniform estimate for the Bergman kernel there exists  $\delta > 0$ such that
\begin{equation}
\forall\ z\in M,\ \forall\ w\in B_{g}(z,\delta )\ :\ \vert K(w,z)\vert\succeq 1
\end{equation}
Let $r\geq 2\delta$ and $(a_{j})$ an $r$-lattice. Let $(a_{k_{j}})\subset (a_{j})$ such that $d_{g}(a_{k_{j}},a_{k_{l}}) > r$ if $j\not=l$ so that
\begin{equation}
d_{g}(w,a_{k_{j}})  \leq  r/2\Longrightarrow d_{g}(w,a_{k_{l}})\geq  r/2\\
 \end{equation}
 and
\begin{equation}
d_{g}(w,a_{k_{j}})  \leq  r/2\Longrightarrow d_{g}(w,a_{k_{l}})\geq  {1\over 2}d_{g}(a_{k_{j}},a_{k_{l}})
 \end{equation}
Let $\nu$ be the positive measure
$$
\nu:=\sum_{j}\mathsf{1}_{B_{g}(a_{j},\delta)}\mu
$$
Then $T_{\nu}\leq T_{\mu}$ so that $\Vert T_{\nu}\Vert_{p}\leq \Vert T_{\mu}\Vert_{p}$. Let $(e_{t})$ be  an orthonormal basis of $\mathcal{F}^{2}(M,L)$ and $H : \mathcal{F}^{2}(M,L)\rightarrow \mathcal{F}^{2}(M,L)$ the operator defined by
$$
He_{m}=s_{a_{k_{m}}}
$$
here $s_{a_{k_{m}}}\in\mathcal{F}^{2}(M,L)$ defined as
$$
s_{a_{k_{m}}}(w)\otimes {e(a_{k_{m}})\over\vert e(a_{k_{m}})\vert}=K(w,a_{k_{m}})
$$
where $e$ is a frame of $L$ arround $a_{k_{m}}$. By off-diagonal estimate  for the Bergman kernel
$$
\forall\ w\in M\ :\ \vert s_{a_{k_{m}}}(w)\vert \preceq e^{-\alpha d_{g}(w,a_{k_{m}})}
$$
By lemma 5.4, $H$ extends to a bounded operator on all of $\mathcal{F}^{2}(M,L)$ whose
operator norm is bounded above by a constant that only depends of $(a_{k_{m}})$. If $R=H^{*}T_{\nu}H$
then $$\Vert R\Vert_{p}\leq \Vert T_{\nu}\Vert_{p}\leq \Vert T_{\mu}\Vert_{p}$$
Consider the  operators $\Delta$ and $E$ defined by
$$
\Delta s:=\sum_{m}<He_{m},e_{m}><s,e_{m}>e_{m}\quad\hbox{and}\quad E=R-\Delta
$$
By $(5.1)$ we have
\begin{equation}
{1\over 2}\Vert \Delta\Vert_{p}^{p}-\Vert E\Vert_{p}^{p}\leq\Vert  H\Vert_{p}^{p}\leq\Vert T_{\mu}\Vert_{p}^{p}
\end{equation}
We estimate $\Vert \Delta\Vert_{p}$ from below,
\begin{eqnarray*}
\Vert\Delta\Vert_{p}^{p}&=&\sum_{m}<De_{m},e_{m}>^{p}\\
&=&\sum_{m}<T_{\nu}a_{k_{m}},a_{k_{m}}>^{p}\\
&=&\sum_{m}\Bigl(\int_{M}\vert s_{a_{k_{m}}}(w)\vert^{2}d\nu(w)\Bigr)^{p}\\
&=&\sum_{m}\Bigl(\int_{M}\vert K(w,a_{k_{m}})\vert^{2}d\nu(w)\Bigr)^{p}\\
&\geq &\sum_{m}\Bigl(\int_{B_{g}(a_{k_{m}},\delta)}\vert K(w,a_{k_{m}})\vert^{2}d\nu(w)\Bigr)^{p}\\
&\succeq &\sum_{m}(\mu(B_{g}(a_{k_{m}},\delta))^{p}
\end{eqnarray*}
Thus
\begin{equation}
\Vert\Delta\Vert_{p}^{p} \succeq \sum_{m}(\mu(B_{g}(a_{k_{m}},\delta))^{p}
\end{equation}
We estimate $\Vert E\Vert_{p}$ from above,
\begin{eqnarray*}
\Vert E\Vert_{p}^{p} &\leq & \sum_{l\not=m}<Re_{m},e_{k}>^{p}\\
&= &\sum_{l\not=m}<T_{\nu}e_{m},e_{k}>^{p}\\
&\leq & \sum_{l\not=m}<T_{\nu}s_{a_{k_{m}}},s_{a_{k_{l}}}>^{p}\\
&\leq & \sum_{l\not=m}\Bigl(\int_{M}\vert s_{a_{k_{m}}}(w)\vert\vert s_{a_{k_{l}}}(w)\vert d\nu(w)\Bigr)^{p}\\
&\leq & \sum_{l\not=m}\Bigl(\int_{M} e^{-\alpha d_{g}(w,a_{k_{m}})}e^{-\alpha d_{g}(w,a_{k_{l}})}d\nu(w)\Bigr)^{p}\\
&\preceq & e^{-\alpha pr\over 2}\sum_{m\not=l}\Bigl(\int_{M}e^{-{\alpha\over 2} d_{g}(w,a_{k_{m}})}e^{-{\alpha\over 2} d_{g}(w,a_{k_{l}})}d\nu(w)\Bigr)^{p}\ \hbox{(5.3)}\\
&\preceq & e^{-\alpha pr\over 2}\sum_{m\not=l}\Bigl(\sum_{j}\int_{B_{g}(a_{k_{j}},\delta)}e^{-{\alpha\over 2} d_{g}(w,a_{k_{m}})}e^{-{\alpha\over 2} d_{g}(w,a_{k_{l}})}d\nu(w)\Bigr)^{p}\\
&\preceq & e^{-\alpha pr\over 2}\sum_{m\not=l}\Bigl(\sum_{j}\mu(B_{g}(a_{k_{j}},\delta))e^{-{\alpha\over 4} d_{g}(a_{k_{m}},a_{k_{j}})}e^{-{\alpha\over 4} d_{g}(a_{k_{l}},a_{k_{j}})}d\nu(w)\Bigr)^{p}\ \hbox{(5.4)}
\end{eqnarray*}
Since $0 < p < 1$
\begin{eqnarray*}
\Vert E\Vert_{p}^{p} &\preceq & e^{-\alpha pr\over 2}\sum_{j}\mu(B_{g}(a_{k_{j}},\delta))^{p}\sum_{m\not=k}e^{-{\alpha\over 4} d_{g}(a_{k_{m}},a_{k_{j}})}e^{-{\alpha\over 4} d_{g}(a_{k_{l}},a_{k_{j}})}\\
&\preceq & e^{-\alpha pr\over 2}\sum_{j}\mu(B_{g}(a_{k_{j}},\delta))^{p}\Bigl(\sum_{l}e^{-{\alpha\over 4} d_{g}(a_{k_{l}},a_{k_{j}})}\Bigr)^{2}\\
&\preceq & e^{-\alpha pr\over 2}\sum_{j}\mu(B_{g}(a_{k_{j}},\delta))^{p}
\end{eqnarray*}
Thus
\begin{equation}
\Vert E\Vert_{p}^{p}\preceq e^{-\alpha pr\over 2}\sum_{j}\mu(B_{g}(a_{k_{j}},\delta))^{p}
\end{equation}
By (5.5), (5.6) and (5.7), for $r$ large enough
\begin{eqnarray*}
\Vert T_{\mu}\Vert_{p}^{p} &\geq &\Bigl({c_{1}\over 2}-c_{2}e^{-\alpha pr\over 2}\Bigr)\sum_{j}\mu(B_{g}(a_{k_{j}},\delta))^{p}\\
& \succeq &\sum_{j}\mu(B_{g}(a_{k_{j}},\delta))^{p}
\end{eqnarray*}
for each sub-lattice $(a_{k_{j}})$ of the $r$-lattice $(a_{j})$. Thus
$$
\sum_{j}\mu(B_{g}(a_{j},\delta))^{p} \preceq \Vert T_{\mu}\Vert_{p}^{p}
$$
$(b)\Longrightarrow (a)$ Suppose that $\tilde{\mu}\in L^{p}(M,dv_{g})$. By Lemma 5.6 it suffice to show
$$
\mu(B_{g}(.,\delta))\in L^{p}(Mdv_{g})\Longrightarrow T_{\mu}\in\mathcal{S}_{p}
$$
Let $\phi_{r}(z):=\mu(B_{g}(z,\delta))$. If $s\in\mathcal{F}^{2}(M,L)$ we have
\begin{eqnarray*}
<T_{\phi_{r}}s,s>&=&\int_{M}\vert s(z)\vert^{2}\mu(B_{g}(z,\delta))dv_{g}(z)\\
&=&\int_{M}\vert s(z)\vert^{2}dv_{g}(z)\int_{M}\mathsf{1}_{B_{g}(w,\delta)}d\mu(w)\\
&=&\int_{M}d\mu(w)\int_{M}\vert s(z)\vert^{2}\mathsf{1}_{B_{g}(w,\delta)}dv_{g}(z)\\
&=& \int_{M}d\mu(w)\int_{B_{g}(w,\delta)}\vert s(z)\vert^{2}dv_{g}(z)\\
&\succeq & <T_{\mu}s,s>
\end{eqnarray*}
Thus $T_{\mu} \preceq T_{\phi_{r}}$. Since  $T_{\phi_{r}}\in \mathcal{S}_{p}$ ( Lemma 5.5 ) we get  $T_{\mu}\in\mathcal{S}_{p}$.
\subsection{Proof of Theorem 1.5}
For the proof of Theorem 1.5, we need some preliminary lemmas.\\
 Let $(M,g)$ be a K\"ahler manifold and $(L,h)\rightarrow M$ be a holomorphic hermitian line bundle. Let $(N,\omega_{N})$ be a an Hermitian manifold. For a holomorphic map $\Phi : N\rightarrow M$, let $(\Phi^{*}L,\Phi^{*}h)\rightarrow N$ the holomorpic hermitian line bundle, called  the pull back of $L$,  whose fibers are $(\Phi^{*}L)_{x}=L_{\Phi(x)}$ with metrics $(\Phi^{*}h)(x)=h(\Phi(x))$ where $x\in N$. We define the composition operator
\begin{eqnarray*}
C_{\Phi} :  \mathcal{F}^{2}(M,L)&\longrightarrow &\mathcal{F}^{2}(N,\Phi^{*}L)\\
  s &\longrightarrow & s\circ \Phi
\end{eqnarray*}
The transform $B_{\Phi}$
(related to the usual Berezin transform) associated to $\Phi$ is the function  on $M$ defined as  follows
$$
B_{\Phi}(z)^{2}:=\int_{M}\vert K(z,w)\vert^{2}d\nu_{\Phi}(w)
$$
where $\nu_{\Phi}$ is the pull-back measure defined as follows : for all Borel set $E\subset M$
$$
\nu_{\Phi}(E)=\int_{N}\mathsf{1}_{\Phi^{-1}(E)}(w)dv_{\omega_{N}}(w)
$$
Let $z\in M$. Fix a frame $e$ in a neighborhood $U$ of the point $z$ and consider an orhonormal basis $(s_{j})_{j=1}^{d}$ of $\mathcal{F}^{2}(X,L)$ ( where $1\leq d\leq\infty$). In $U$ each $s_{i}$ is represented by a holomorphic function $f_{i}$ such that $s_{i}(x)=f_{i}(x)e(x)$. Let
$$
s_{z}(w):=\vert e(z)\vert\sum_{i=1}^{d}\overline{f_{i}(z)}s_{i}(w)
$$
Then  $s_{z}$ is a holomorphic section and
\begin{eqnarray*}
\vert s_{z}(w)\vert&=&\Big\vert\Bigl(\sum_{i=1}^{d}\overline{f_{i}(z)}s_{i}(w)\Bigr)\otimes\overline{e(z)}\Big\vert\\
&=&\Big\vert\sum_{i=1}^{d}s_{i}(w)\otimes\overline{s_{i}(z)}\Big\vert\\
&=&\vert K(w,z)\vert
\end{eqnarray*}
By proposition 3.3
\begin{eqnarray*}
\int_{M}\vert s_{z}\vert^{2}dv_{g}(w)&=&\int_{M}\vert K(w,z)\vert^{2}dv_{g}(w)\\
&=&\vert K(z,z)\vert\asymp 1
\end{eqnarray*}
\begin{lem}
We have
$$
<C_{\Phi}^{*}C_{\Phi}s_{z},s_{z}>=B_{\Phi}(z)^{2}
$$
$$
B_{\Phi}(z)^{2}=\int_{M}\vert s_{z}(w)\vert^{2}d\nu_{\Phi}(w)
$$
and
$$
\int_{M}\vert B_{\Phi}(z)\vert^{p}dv_{g}(z)=\int_{M}<C_{\Phi}^{*}C_{\Phi}s_{z},s_{z}>^{p\over 2}dv_{g}(z)
$$
where $\nu_{\Phi}$ is the pull-back measure defined as follows : for all Borel set $E\subset M$
$$
\nu_{\Phi}(E)=\int_{N}\mathsf{1}_{\Phi^{-1}(E)}(w)dv_{\omega_{N}}(w)
$$
\end{lem}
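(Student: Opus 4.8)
The plan is to establish the three asserted identities essentially in a convenient order, since the first follows from a change-of-variables computation and the third is then an immediate consequence of the first. The essential geometric input is the observation that $\nu_{\Phi}$, as defined, is precisely the pushforward of the volume measure $dv_{\omega_{N}}$ under the holomorphic map $\Phi$: indeed, $\nu_{\Phi}(E)=\int_{N}\mathsf{1}_{\Phi^{-1}(E)}(w)\,dv_{\omega_{N}}(w)=\int_{N}\mathsf{1}_{E}(\Phi(w))\,dv_{\omega_{N}}(w)$, so that for every nonnegative Borel function $F$ on $M$ one has the transfer formula $\int_{N}F(\Phi(w))\,dv_{\omega_{N}}(w)=\int_{M}F(u)\,d\nu_{\Phi}(u)$, justified for nonnegative integrands by Tonelli's theorem and the monotone class argument.

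First I would record the middle identity $B_{\Phi}(z)^{2}=\int_{M}\vert s_{z}(w)\vert^{2}\,d\nu_{\Phi}(w)$, which is immediate: the symmetry $\vert s_{z}(w)\vert=\vert K(w,z)\vert=\vert K(z,w)\vert$ of the Bergman kernel recalled in Section 2 substituted directly into the defining formula $B_{\Phi}(z)^{2}=\int_{M}\vert K(z,w)\vert^{2}\,d\nu_{\Phi}(w)$ gives the claim at once.

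Next, for the first identity I would write $<C_{\Phi}^{*}C_{\Phi}s_{z},s_{z}>=<C_{\Phi}s_{z},C_{\Phi}s_{z}>=\Vert C_{\Phi}s_{z}\Vert^{2}_{\mathcal{F}^{2}(N,\Phi^{*}L)}$ and expand the norm. Since $C_{\Phi}s_{z}=s_{z}\circ\Phi$ and the pullback metric satisfies $(\Phi^{*}h)(w)=h(\Phi(w))$ by definition, the pointwise norm is $\vert(s_{z}\circ\Phi)(w)\vert_{\Phi^{*}h}=\vert s_{z}(\Phi(w))\vert_{h}$, whence $\Vert C_{\Phi}s_{z}\Vert^{2}=\int_{N}\vert s_{z}(\Phi(w))\vert^{2}_{h}\,dv_{\omega_{N}}(w)$. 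Applying the transfer formula above with $F=\vert s_{z}\vert^{2}_{h}$ converts this into $\int_{M}\vert s_{z}(u)\vert^{2}_{h}\,d\nu_{\Phi}(u)$, which by the middle identity equals $B_{\Phi}(z)^{2}$.

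Finally, the third identity is a formal consequence: since $<C_{\Phi}^{*}C_{\Phi}s_{z},s_{z}>=B_{\Phi}(z)^{2}$ and $B_{\Phi}(z)\geq 0$, raising to the power $p/2$ gives the pointwise equality $<C_{\Phi}^{*}C_{\Phi}s_{z},s_{z}>^{p/2}=\vert B_{\Phi}(z)\vert^{p}$, and integrating in $z$ against $dv_{g}$ yields the claim. I do not expect any serious obstacle; the only points requiring care are the correct identification of the pullback metric on $\Phi^{*}L$ and the nonnegativity/measurability hypotheses needed to invoke the pushforward change of variables, both of which are routine given the definitions recalled in Section 2.
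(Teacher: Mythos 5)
Your proposal is correct and follows essentially the same route as the paper: expand $\langle C_{\Phi}^{*}C_{\Phi}s_{z},s_{z}\rangle$ as $\Vert C_{\Phi}s_{z}\Vert^{2}$, pass from the integral over $N$ to the integral over $M$ via the pushforward measure $\nu_{\Phi}$, and identify $\vert s_{z}(w)\vert$ with $\vert K(z,w)\vert$. You merely make explicit the change-of-variables justification and the trivial final step of raising to the power $p/2$, which the paper leaves implicit.
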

\begin{proof}
We have
\begin{eqnarray*}
<C_{\Phi}^{*}C_{\Phi}s_{z},s_{z}>&=&<C_{\Phi}s_{z},C_{\Phi}s_{z}>\\
&=&\int_{N}\vert s_{z}(\Phi(w))\vert^{2} dv_{\omega_{N}}(w)\\
&=&\int_{M}\vert s_{z}(w)\vert^{2}d\nu_{\Phi}(w)\\
&=&\int_{M}\vert K(z,w)\vert^{2}d\nu_{\Phi}(w)\\
&=& \int_{M}\vert K(z,\Phi(w))\vert^{2}dv_{g}(w)\\
&=& B_{\Phi}(z)^{2}
\end{eqnarray*}
\end{proof}
\noindent The following lemma presents a desired connection between  composition
operators and Toeplitz operators.
\begin{lem}
Let $(M,g)$ be a K\"ahler manifold and let $\Phi : N\rightarrow M$ be a holomorphic map such that $C_{\Phi}$ is bounded. Then
$$
C_{\Phi}^{*}C_{\Phi}=T_{\nu_{\Phi}}
$$
where
$$
T_{\nu_{\Phi}}s(z)=\int_{M}<s(w),K(w,z)>d\nu_{\Phi}(w)
$$
\end{lem}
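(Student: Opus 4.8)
The plan is to establish the operator identity $C_\Phi^{*}C_\Phi = T_{\nu_\Phi}$ weakly, by showing that the two operators induce the same bounded sesquilinear form on $\mathcal{F}^{2}(M,L)$; since a bounded operator on a Hilbert space is uniquely determined by its associated form, this is enough. First I would fix arbitrary $s,t\in\mathcal{F}^{2}(M,L)$ and evaluate the form on the left. By the definition of the adjoint and of $C_\Phi$,
$$
\langle C_\Phi^{*}C_\Phi s,t\rangle=\langle C_\Phi s,C_\Phi t\rangle=\int_{N}\langle s(\Phi(w)),t(\Phi(w))\rangle\,dv_{\omega_{N}}(w),
$$
where the fiberwise pairing is taken in $(\Phi^{*}L)_{w}=L_{\Phi(w)}$ with the pull-back metric $\Phi^{*}h$. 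Since $(\Phi^{*}h)(w)=h(\Phi(w))$, this pairing agrees with the one in $L$ at the point $\Phi(w)$, so no metric distortion enters. Note also that taking $t=s$ gives $\int_{M}\vert s\vert^{2}\,d\nu_\Phi=\Vert C_\Phi s\Vert^{2}\leq\Vert C_\Phi\Vert^{2}\Vert s\Vert_{2}^{2}$, i.e. boundedness of $C_\Phi$ already forces the Carleson estimate for $\nu_\Phi$ directly, without appeal to Theorem 1.5.

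The second step is a change of variables. Because $\nu_\Phi$ is by construction the push-forward of $dv_{\omega_{N}}$ under $\Phi$, the transfer formula gives, for every $\nu_\Phi$-integrable (or nonnegative) function $F$ on $M$,
$$
\int_{N}F(\Phi(w))\,dv_{\omega_{N}}(w)=\int_{M}F(w)\,d\nu_\Phi(w).
$$
Applying this with $F(w)=\langle s(w),t(w)\rangle_{L_{w}}$ yields
$$
\langle C_\Phi^{*}C_\Phi s,t\rangle=\int_{M}\langle s(w),t(w)\rangle\,d\nu_\Phi(w).
$$

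Next I would compute the sesquilinear form attached to the Toeplitz operator and match it. Writing out the definition of $T_{\nu_\Phi}$ and interchanging the order of integration (justified since the Carleson bound above makes the iterated integrals absolutely convergent),
$$
\langle T_{\nu_\Phi}s,t\rangle=\int_{M}\Big(\int_{M}\langle K(z,w)\bullet s(w),t(z)\rangle\,dv_{g}(z)\Big)\,d\nu_\Phi(w).
$$
Expanding $K(z,w)\bullet s(w)=\sum_{j}\langle s(w),s_{j}(w)\rangle s_{j}(z)$ in the orthonormal basis $(s_{j})$ and integrating in $z$ collapses the inner integral to $\sum_{j}\langle s(w),s_{j}(w)\rangle\langle s_{j},t\rangle$, which is exactly the expansion of $\langle s(w),t(w)\rangle_{L_{w}}$ coming from $t(w)=\sum_{j}\langle t,s_{j}\rangle s_{j}(w)$. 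Hence $\langle T_{\nu_\Phi}s,t\rangle=\int_{M}\langle s(w),t(w)\rangle\,d\nu_\Phi(w)$, the same expression as before. Since the two forms coincide for all $s,t\in\mathcal{F}^{2}(M,L)$, the operators are equal.

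I expect the only genuine technical point to be the rigorous justification of the Fubini interchange and the absolute convergence of the iterated integrals; this is precisely where boundedness of $C_\Phi$ (through the Carleson estimate derived in the first paragraph) is used. The bundle-metric bookkeeping in the push-forward identity and the orthonormal-basis expansion collapsing to the fiberwise inner product are routine, so the argument is short once the form computation on each side is set up correctly.
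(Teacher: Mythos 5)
Your proposal is correct and follows essentially the same route as the paper: both reduce $\langle C_\Phi^{*}C_\Phi s,t\rangle$ to $\int_{M}\langle s,t\rangle\,d\nu_\Phi$ via the push-forward identity and then identify this with the Toeplitz form using the reproducing property of $K$ together with Fubini. Your explicit remark that boundedness of $C_\Phi$ yields the Carleson bound needed to justify the interchange of integrals is a point the paper leaves implicit, but it does not change the argument.
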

\begin{proof}
Since $C_{\Phi}$ is bounded,   for all $s,\sigma\in\mathcal{F}^{2}(M,L)$
\begin{eqnarray*}
<C_{\Phi}^{*}C_{\Phi}s,\sigma> &=& <C_{\Phi}s,C_{\Phi}\sigma>\\
&=&\int_{N}<s(\Phi(w)),\sigma(\Phi(w))>dv_{\omega_{N}}(w)\\
&=&\int_{M}<s(w),\sigma(w)>d\nu_{\Phi}(w)
\end{eqnarray*}
Since
$$
\sigma(w)=\int_{M}<\sigma(z),K(z,w)>dv_{g}(z)
$$
By Fubini Theorem
\begin{eqnarray*}
<C_{\Phi}^{*}C_{\Phi}s,\sigma>&=&\int_{M}<s(w),\int_{M}K(w,z).\sigma(t)dv_{g}(z)>d\nu_{\Phi}(w)\\
&=&\int_{M}\int_{M}<s(w),K(w,z).\sigma(t)>dv_{g}(z)d\nu_{\Phi}(w)\\
&=&\int_{M}\int_{M}<K(z,w).s(w),\sigma(t)> dv_{g}(z)d\nu_{\Phi}(w)\\
&=&\int_{M}<\int_{M}K(z,w).s(w)d\nu_{\Phi}(w),\sigma(z)>dv_{g}(z)\\
&=&<\int_{M}K(.,w).s(w)d\nu_{\Phi}(w),\sigma>
\end{eqnarray*}
we get
$$
C_{\Phi}^{*}C_{\Phi}s(z)=\int_{M}<s(w),K(w,z)>d\nu_{\Phi}(w)
$$
\end{proof}
\begin{cor}
Let $(M,g)$ be a K\"ahler manifold and let $\Phi : N\rightarrow M$ be a holomorphic map such that $C_{\Phi} : \mathcal{F}^{2}(M,L)\rightarrow\mathcal{F}^{2}(N,\Phi^{*}L)$ is bounded. If $0 < p < \infty$, then
$
C_{\Phi}\in\mathcal{S}_{p}\quad\hbox{if and only if}\quad T_{\nu_{\Phi}}\in\mathcal{S}_{p/2}$.
\end{cor}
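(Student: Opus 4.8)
The plan is to reduce the statement to the standard operator-theoretic identity relating the singular values of a compact operator $A$ to the eigenvalues of the positive operator $A^{*}A$, and then to invoke Lemma 5.8 with $A=C_{\Phi}$. First I would recall that for any compact operator $A:H_{1}\rightarrow H_{2}$ with Schmidt decomposition $Af=\sum_{n}\lambda_{n}\langle f,e_{n}\rangle\sigma_{n}$, the numbers $\lambda_{n}=s_{n}(A)$ are precisely the eigenvalues of the positive compact operator $(A^{*}A)^{1/2}$, so that $s_{n}(A^{*}A)=\lambda_{n}^{2}$. Since $A^{*}A$ is positive and self-adjoint, its singular values coincide with its eigenvalues, and hence by the definition of the Schatten norm recalled at the start of Section 5,
$$
\Vert A\Vert_{\mathcal{S}_{p}}^{p}=\sum_{n}\lambda_{n}^{p}=\sum_{n}\bigl(\lambda_{n}^{2}\bigr)^{p/2}=\Vert A^{*}A\Vert_{\mathcal{S}_{p/2}}^{p/2}.
$$

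Next I would apply Lemma 5.8, which under the standing hypothesis that $C_{\Phi}$ is bounded gives the operator identity $C_{\Phi}^{*}C_{\Phi}=T_{\nu_{\Phi}}$. Substituting $A=C_{\Phi}$ into the displayed identity yields
$$
\Vert C_{\Phi}\Vert_{\mathcal{S}_{p}}^{p}=\Vert T_{\nu_{\Phi}}\Vert_{\mathcal{S}_{p/2}}^{p/2},
$$
from which both implications follow at once: $C_{\Phi}\in\mathcal{S}_{p}$ forces the right-hand side to be finite, whence $T_{\nu_{\Phi}}\in\mathcal{S}_{p/2}$, and conversely finiteness of the right-hand side forces $C_{\Phi}\in\mathcal{S}_{p}$. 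As a byproduct this gives the two norms equal, although the corollary only asks for the equivalence.

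The one point requiring a short justification is that each membership statement presupposes compactness of the operator in question, since $\mathcal{S}_{p}$ was defined in this paper only for compact operators. In the direction $T_{\nu_{\Phi}}\in\mathcal{S}_{p/2}\Rightarrow C_{\Phi}\in\mathcal{S}_{p}$ I would first observe that $T_{\nu_{\Phi}}=C_{\Phi}^{*}C_{\Phi}$ compact implies $C_{\Phi}$ compact: for a bounded sequence $x_{j}$ converging weakly to zero one has $\Vert C_{\Phi}x_{j}\Vert^{2}=\langle C_{\Phi}^{*}C_{\Phi}x_{j},x_{j}\rangle\leq\Vert T_{\nu_{\Phi}}x_{j}\Vert\,\Vert x_{j}\Vert\rightarrow 0$, since the compact operator $T_{\nu_{\Phi}}$ maps weakly null bounded sequences to norm null ones; the reverse direction is immediate because $\mathcal{S}_{p}$-membership already entails compactness. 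I do not expect a genuine obstacle here: the entire content is the singular-value bookkeeping $s_{n}(C_{\Phi}^{*}C_{\Phi})=s_{n}(C_{\Phi})^{2}$, and the only care needed is to reconcile it with the Schmidt-decomposition definition of $\mathcal{S}_{p}$ adopted in this paper.
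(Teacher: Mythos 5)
Your proof is correct and is exactly the argument the paper intends: the corollary is stated without proof as an immediate consequence of Lemma 5.8 together with the standard identity $s_{n}(C_{\Phi})^{2}=s_{n}(C_{\Phi}^{*}C_{\Phi})$, which is precisely your computation $\Vert C_{\Phi}\Vert_{\mathcal{S}_{p}}^{p}=\Vert C_{\Phi}^{*}C_{\Phi}\Vert_{\mathcal{S}_{p/2}}^{p/2}$. Your extra remark that compactness of $T_{\nu_{\Phi}}=C_{\Phi}^{*}C_{\Phi}$ forces compactness of $C_{\Phi}$ is a worthwhile detail the paper leaves unstated.
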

\noindent Since  $\vert K(z,z)\vert \asymp 1$ and
\begin{eqnarray*}
\tilde{\nu}_{\Phi}(z)&=&{1\over\vert K(z,z)\vert}\int_{M}\vert K(z,w)\vert^{2}d\nu_{\Phi}(w)
\asymp \int_{M}\vert K(z,w)\vert^{2}d\nu_{\Phi}(w)\\
&\asymp & B_{\Phi}(z)^{2}
\end{eqnarray*}
then the proof of Theorem 1.5 follows from  Theorems 1.2;1.3 and 1.4.


\end{document}